\def\eps{\varepsilon}
\def\e{{\rm e}}
\def\Re{{\rm Re}}
\def\dg{{D_y}}
\def\dc{{d}}
\def\a{{a}}
\def\b{{b}}
\def\tPhi{\Phi^{(1)}}
\def\tPsi{\Psi^{(1)}}
\def\ttPhi{\Phi^{(2)}}
\def\ttPsi{\Psi^{(2)}}
\def\ttpsi{\psi^{(2)}}
\def\dist{{\rm dist}}
\def\sign{{\rm sign}}
\def\spt{{\rm spt}}
\def\dd{{\rm d}}
\def\ddt{{\frac{\dd}{\dd t}}}
\def\R {\mathbb{R}}
\def\J {{\mathcal J}}
\def\ZZ {{\mathbb Z}}
\def \l {\langle}
\def \r {\rangle}
\def\T {{\mathbb T}}
\def\de{{\partial}}
\def\p {{\partial}}
\def\dt {{\partial_{t}}}
\newtheorem{proposition}{Proposition}[section]
\newtheorem{theorem}[proposition]{Theorem}
\newtheorem{corollary}[proposition]{Corollary}
\newtheorem{lemma}[proposition]{Lemma}
\theoremstyle{definition}
\newtheorem{remark}[proposition]{Remark}
\numberwithin{equation}{section}
\title[Degenerate shear flows and circular flows]{On degenerate circular and shear flows: the point vortex and power law circular flows}
\author[M. Coti Zelati and C. Zillinger]{Michele Coti Zelati and Christian Zillinger}
\address{Department of Mathematics, Imperial College London, London, SW7 2AZ, UK}
\email{m.coti-zelati@imperial.ac.uk}
\address{Department of Mathematics, University of Southern California, Los Angeles, California 90089, USA}
\email{zillinge@usc.edu}
\subjclass[2000]{76E05, 35Q31, 35Q35, 76B}
\keywords{}
\begin{document}

\begin{abstract}
We  consider  the  problem  of  asymptotic  stability  and  linear  inviscid  damping  for  perturbations  of  a  point  vortex  and
similar degenerate circular flows.  Here, key challenges include the lack of strict monotonicity and the necessity of working
in weighted Sobolev spaces whose weights degenerate as the radius tends to zero
or infinity.
Prototypical examples are given by circular flows with power law singularities or zeros
as $r\downarrow 0$ or $r \uparrow \infty$.
\end{abstract}


\maketitle

\tableofcontents

\section{Introduction}

When considering the 2D Euler equations close to Couette flow on $\T \times \R$,
it is a classical result by Orr that the linearized problem for the vorticity
reduces to free transport. Using the explicit solution, it can then be shown
that the vorticity weakly, but not strongly, converges to its $x$ average and
that the associated (perturbation to the) velocity field strongly converges to a
shear flow with sharp algebraic decay rates provided the initial perturbation is
sufficiently regular. For this special case, a precise description of this is
given by the explicit Fourier characterization of the velocity field, namely
\begin{align*}
  \tilde{v}(t,k,\eta+kt)= (-i(\eta+kt), ik) \frac{1}{k^{2}+(\eta-kt)^{2}}\tilde{\omega}(0,k,\eta),
\end{align*}
where $ t,\eta \in \R, k \in \ZZ$. By Plancherel's theorem and noting that
$(k,\eta)\mapsto (k,\eta+kt)$ is an isometry, we observe the decay and
convergence of $\tilde{v}$ as well as its precise dependence on the initial
data.

If the underlying profile is not of this special form, the problem is not
explicitly solvable anymore and much effort has to be invested to establish
similar results. For a discussion of the literature we refer to
\cite{bedrossian2015dynamics} and just briefly mention the following works:
\begin{itemize}
\item In \cite{bedrossian2015inviscid}, nonlinear inviscid damping for Gevrey regular perturbations around Couette
  flow on $\T \times \R$ is established using methods of pseudo-differential calculus,
  paraproducts and scattering methods. In particular, their analysis yields a
  fine description of the nonlinear dynamics and ``echos'' and has been extended
  to many further settings such as the 3D or viscous setting with coauthors.
\item In \cite{Zill3}, using similar methods, the second author established
  linear inviscid damping for, roughly speaking, Bilipschitz flow profiles also
  in the setting of domains with boundary such as a finite periodic channel $\T
  \times [0,1]$ with impermeable walls. Here, boundary effects impose strong
  limits on the achievable regularity of the linearized problem to fractional
  Sobolev spaces and as consequence also on the nonlinear problem. In
  \cite{Zill5}, these results are extended to weighted Sobolev spaces, following
  the works of \cite{Zhang2015inviscid} and the setting of circular flows. Here,
  in order to avoid degeneracies results are limited to annuli
  $B_{R_{2}}\setminus B_{R_{1}}$ with $0<R_{1}<R_{2}<\infty$.
\item In \cite{Zhang2015inviscid}, a very different spectral approach is used
  instead to establish linear inviscid damping under different/weaker
  conditions. Recently, these methods have further been able to establish
  similar results for flows close to Poiseuille flow \cite{WZZvorticitydepl} and for 
  the Kolmogorov flow \cite{WZZkolmogorov}.
\end{itemize}

The main motivating example of this work is given by perturbations of the
velocity profile $\frac{1}{r}e_{\theta}$ on $\R^{2}\setminus \{0\}$ of a point
vortex at the origin.
While for the unperturbed problem, solutions are explicit, for perturbations
the degenerate behavior as $r$ tends to $0$ or $\infty$ makes the question of
(asymptotic) stability and damping mathematically very challenging.

As the main results of this article, we establish linear inviscid damping,
stability and scattering for a class of \emph{mildly degenerate} flows (c.f.
Section \ref{sec:mildly_degen}).
This class includes circular flows with power law $U(r)\sim r^{\alpha}, \alpha \in \R$ as
$r\downarrow 0$ and $U(r)\sim r^{\beta}, \beta\in \R$ as $r\uparrow \infty$ as
well as degenerate shear flows like $U(y)=\e^{y}$ in a periodic channel $\T
\times I$.
As the main stability assumption, here we do not require strict monotonicity, but rather require that
\begin{align}
   \sign(U') U''' \leq c |U'|,
\end{align}
which is natural in view of explicit and scaling results for polynomial and
exponential shear flow profiles on bounded intervals.
Due to technical obstructions and the locally perturbative nature of our method,
some further conditions are required on the underlying flow (c.f. Section \ref{sec:mildly_degen})
We remark that this condition is different from Arnold's stability criterion, which
is given by a control of $U''/U$ from above and below.

Stability of radially symmetric, strictly monotone decreasing distributions of 
vorticity was studied recently  in \cite{BCZVvortex2017}, using a spectral approach,
devising interesting phenomena such as \emph{vortex axysimmetrization}  \emph{vorticity depletion} (see \cite{Euler_stability} and \cite{WZZvorticitydepl}). 
Our work can be seen as complementary to this one: we do not require monotonicity
of the profile and we can handle vorticity profiles that blow up at $r=0$.

In the following we recall the linearized Euler equations for shear flows and
circular flows and introduce our notion of \emph{mildly degenerate} flows.
Our main results are then stated in Subsection \ref{sec:main-results}.

\subsection{The linearized Euler equations}
\label{sec:linEuler}
We consider the linearization of the 2D incompressible Euler equations
\begin{align*}
  \partial_{t} \omega + v \cdot \nabla \omega =0
\end{align*}
around
\begin{enumerate}
\item A shear flow $v=(U(y), 0), \omega=-U'(y)$ on $\T \times I$, where $I$ is a
  possible infinite interval.
\item A circular flow $v=u(r) r e_\theta, \omega =\frac{1}{r}\partial_r (r^{2}u(r))$ on $\R^2\setminus
  \{0\}$ or $\R^2\setminus B_{r_1}(0)$ or $B_{r_2}(0)\setminus B_{r_1}(0)$.
\end{enumerate}

The linearized Euler equations around the shear flow are given by
\begin{align}
  \label{eq:shear}
  \begin{split}
  \partial_{t} \omega + U(y)\partial_x \omega &= U''(y)\partial_x \psi, \\
  \Delta \psi &=\omega,
  \end{split}
\end{align}
where for an infinite interval we prescribe $\nabla \psi \in L^2$ and on the
boundary require that $\partial_x \psi=0$ (impermeable walls).

For the circular flow, we first consider the linearized problem in polar coordinates
\begin{align*}
  \dt \omega + u(r)\p_{\theta} \omega &= b(r) \p_{\theta} \omega, \\
  (\p_{r}^{2}+ \frac{1}{r}\p_{r} + \frac{1}{r^{2}}\p_{\theta}^{2})\phi&=f,
\end{align*}
where $b(r)= -\frac{1}{r}\p_{r} (\frac{1}{r}\p_{r}(r^{2} u(r)))$.
Here, the distinguished cases with $b(r)=0$ for $r>0$ are given by the Taylor-Couette flow $u(r)=C_{1} +
\frac{C_{2}}{r^{2}}$ corresponding to constant angular velocity for $C_{2}=0$ and a point
vortex for $C_{1}=0$, respectively.

Introducing log-polar coordinates
$(x,y)=\e^s(\cos\theta, \sin\theta)$ as well as relabeling
$\omega(t,s,\theta):=\e^{2s}\omega(t,s,\theta)$, $B(e^{s})=e^{-2s}b(e^{s})$ we obtain
\begin{align}
  \label{eq:circular}
  \begin{split}
  \dt \omega + u(\e^s)\p_\theta \omega  &= B(\e^s)\p_\theta \psi , \\
  (\p_{\theta}^2 + \p_{s}^2) \psi &=\omega, 
  \end{split}
\end{align}
where $B(\e^{s})=\p_{s} (\e^{-2s}\p_{s}(\e^{2s}u(\e^{s})))$.

We can reformulate both problems discussed above in a unified fashion as follows. Let $I$ be a (possibly infinite)
interval, and let $U,B:I\to \R$ be given functions. We study the behavior of solutions
$\omega: \T\times I\to \R$
to the linear problem
\begin{align}\label{eq:Euler}
\begin{cases}
\de_t \omega + U(y)\de_x \omega = B(y) \de_x \psi, \quad &\mbox{in } \T \times I, \\
-\Delta \psi = \omega, &\mbox{in } \T \times I,\\
\de_x \psi= 0, &\mbox{on } \T \times \de I,
\end{cases}
\end{align}
with initial datum
\begin{align}
\omega(0,x,y)=\omega^{in}(x,y),  \quad \mbox{in } \T \times I.
\end{align}

\subsection{Main result}
\label{sec:main-results}

Our main result consists of establishing inviscid damping, stability and scattering
for flows satisfying suitable conditions, which we call \emph{mildly degenerate flows}
(c.f. Section \ref{sec:mildly_degen}). Prototypical examples here are given by
shear flows of the form $U(y)=\e^{\alpha y}$ and circular flows with power law
behavior as $r \downarrow 0$ and $r \uparrow \infty$.

\begin{theorem}\label{thm:main}
  Suppose that $U(y), B(y)$ are mildly degenerate flows on $\T_{L} \times I$.
  Then there exists a symmetric positive operator $A(t):L^2 \rightarrow L^2$ with $C^1$
  dependence on $t$ such that any solution $\omega$ of \eqref{eq:Euler} satisfies
  \begin{align}
    \langle \omega(t), A(t) \omega(t) \rangle_{L^2} &\approx \|\omega(t)\|_{L^2}^2, \\
    \ddt \langle \omega(t), A(t) \omega(t) \rangle_{L^2} &\leq -c \int |U'(y)| |\nabla \psi (t)|^{2} \dd x \dd y \leq 0.
  \end{align}
  In particular, this implies $L^2$ stability and
  \begin{align}
    \int |U'(y)| |\nabla \psi (t)|^{2} \dd x \dd y  \in L^1_t.
  \end{align}
  If further, the initial data is in $H^{1}$ or $H^{2}$, then also 
  \begin{align*}
    \|\omega(t, x- t U(y), y)\|_{H^{1}} \leq C \|\omega^{in}\|_{H^{1}}, \\
    \|\min(1, \dist(y, \p I)) \de_{yy}\omega(t, x- t U(y), y)\|_{L^{2}} \leq C \|\omega^{in}\|_{H^{2}},
  \end{align*}
  and
  \begin{align*}
    \int |U'(y)| |\nabla \psi (t)|^{2} \dd x \dd y \leq \frac{C}{\l t\r^{2}} \|\omega^{in}\|^2_{H^{1}}, \\
    \int |U'(y)| |\psi (t)|^{2} \dd x \dd y \leq \frac{C}{\l t\r^4}\|\omega^{in}\|^2_{H^{2}}.
  \end{align*}
\end{theorem}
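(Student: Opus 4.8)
The plan is to construct the symmetric positive operator $A(t)$ explicitly via a Fourier decomposition in the $x$ variable and a delicate energy functional adapted to each mode, exploiting the structure of the linearized equation \eqref{eq:Euler}. Taking a Fourier transform in $x$, writing $\omega = \sum_k \omega_k(t,y) \e^{ikx}$ and introducing the change of variables $z = U(y)$ to straighten the transport term, the equation for each mode becomes $\dt \omega_k + i k z \omega_k = i k B \psi_k$, with the stream function recovered by solving the elliptic problem $(\p_{yy} - k^2)\psi_k = -\omega_k$. Following the approach of weighted energy estimates (in the spirit of \cite{Zhang2015inviscid, Zill5}), I would look for $A(t)$ of the form $A(t) = \sum_k A_k(t)$ where each $A_k(t)$ is built from a multiplier that is monotone decreasing in $t$ and comparable to the identity, with time derivative producing the good term $-c|U'| |\nabla\psi|^2$. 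The key algebraic identity to exploit is that $\ddt \|\omega_k\|^2$ in a suitable weighted norm picks up a term proportional to $\Re\langle i k B \psi_k, \omega_k\rangle$, and the structure $-\Delta\psi = \omega$ together with an integration by parts converts the $B$-dependent term into something controlled by $|U'||\nabla\psi|^2$ precisely when the mild degeneracy hypotheses (notably $\sign(U') U''' \leq c|U'|$) hold.

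The first main step is to establish the $L^2$ coercivity and monotonicity, i.e., the first two displayed inequalities. I would carry this out mode-by-mode: for each $k$, define the ghost-type weight $A_k(t)$ so that $\ddt\langle \omega_k, A_k \omega_k\rangle = -(\text{good term})_k + (\text{error})_k$, and then show the error is absorbed using the spectral properties of the operator $-\p_{yy} + k^2$ with the degenerate weight coming from $|U'|$ — this is where the precise list of conditions defining ``mildly degenerate'' will be used, presumably to guarantee a Hardy-type or Poincaré-type inequality with the degenerate weight. Summing over $k$ and using Plancherel gives the global statements, and the integrability $\int|U'||\nabla\psi|^2 \in L^1_t$ follows immediately by integrating the monotonicity inequality in time against the two-sided bound.

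The second step is to upgrade to the higher-regularity and decay statements. For the $H^1$ bound on $\omega(t, x - tU(y), y)$, I would differentiate the equation in $y$ after the change of variables $x \mapsto x - tU(y)$, which transforms transport into a shear and introduces a commutator term involving $t U'(y)\p_x$; controlling this requires propagating a weighted $H^1$ bound and again using the monotone functional $A(t)$, now at the level of $\p_y\omega$, together with the already-established control of $\int|U'||\nabla\psi|^2\,\dd t$. The decay rates $\langle t\rangle^{-2}$ for $\int|U'||\nabla\psi|^2$ and $\langle t\rangle^{-4}$ for $\int|U'||\psi|^2$ come from the standard inviscid-damping mechanism: once $\omega(t, x-tU(y),y)$ is bounded in $H^1$ (resp.\ $H^2$ with the degenerate second derivative weight), the oscillation of $\e^{iktU(y)}$ against a regular profile yields, via non-stationary phase / the change of variables $\eta \mapsto \eta + kt$ as in the Couette computation recalled in the introduction, the claimed algebraic gain, and the elliptic regularity for $-\Delta\psi = \omega$ transfers it to $\psi$ and $\nabla\psi$.

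The hard part will be the treatment of the boundary and the degeneracy of $|U'|$ simultaneously: near $\p I$ the weight $\min(1,\dist(y,\p I))$ in the $H^2$ statement signals that full second-derivative control fails at the boundary, so the energy functional $A(t)$ must be carefully cut off and the commutator terms from the $y$-differentiation must be shown not to blow up there; similarly, in the circular-flow setting the degeneracies as $r\downarrow 0$ and $r\uparrow\infty$ translate, in the log-polar variable $s$, into a non-compact domain where the weight $|U'|$ and the coefficient $B$ have power-law behavior that must be balanced against the elliptic estimates for $\p_\theta^2 + \p_s^2$. Verifying that the ``mildly degenerate'' conditions are exactly strong enough to close this balance — in particular that the error terms in $\ddt\langle\omega, A\omega\rangle$ are genuinely dominated by the good term $c\int|U'||\nabla\psi|^2$ uniformly in $k$ and uniformly up to the degenerate ends of $I$ — is the technical crux, and I expect it to occupy the bulk of the argument.
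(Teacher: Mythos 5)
There is a genuine gap at the very first step, and it propagates through the rest of the argument. You propose a \emph{global} change of variables $z=U(y)$ to straighten the transport term and then build $A(t)$ as a Fourier multiplier in the straightened variable (one multiplier per mode $k$). This is precisely what fails in the mildly degenerate setting: the Jacobian $1/U'$ of that change of variables blows up or vanishes as $y$ approaches the ends of $I$, so the transformed Biot--Savart operator has unbounded coefficients and the multiplier construction loses its uniform comparability to the identity and its uniform coercivity in $\dot A(t)$. The paper's central technical device is to replace this global straightening with a \emph{localization}: condition \ref{H3} supplies a partition of unity $\{\chi_j\}$ whose supports $I_j$ are chosen (essentially as dyadic level sets of $U'$) so that $U$ is uniformly bilipschitz on each $I_j$. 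The operator is then built as $A(t)=\sum_j \chi_j A_j(t)\chi_j$, where each $A_j$ is a multiplier on $L^2(I_j)$ in a local Fourier basis adapted to $U|_{I_j}$ (Lemma \ref{lem:Bilipschitz}, citing \cite{Zill5}); the nonlocality of $\Delta_k^{-1}$ is reconciled with this localization by introducing localized potentials $\psi_j, \psi_{j,A}$ on $I_j$ with Dirichlet data and proving the comparison $\int |U'||\nabla_k\psi|^2 \lesssim \sum_j \int_{I_j}|U'||\nabla_k\psi_j|^2$ (Lemma \ref{lem:local_elliptic}). Without this localization step, the ``absorption of error terms into the good term uniformly up to the degenerate ends of $I$'' that you flag as the crux cannot be closed; the key cancellation you are relying on ($\sign(U')U'''\le c|U'|$, which gives the weighted coercivity of Lemma \ref{lem:weighted_elliptic}) is indeed used, but only after the problem has been reduced to the collection of bilipschitz pieces.

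A second, related gap appears in your $H^1$ and $H^2$ step. You propose to differentiate in $y$ after moving to the scattering frame. The paper does work in the scattering formulation, but it does \emph{not} use $\p_y$: it uses the modified derivative $\dg = \dc(y)\p_y$ with $\dc = \sum_j \frac{\max_{I_j}|U'|}{U'}\chi_j$ (\eqref{eq:newdiff}), chosen so that the commutator $[\p_y - iktU', \dg]$ has a form that produces only bounded (or absorbable) coefficients under \ref{H2}. With raw $\p_y$ the commutator with $\p_y - iktU'$ yields $iktU''$, and without the compensating $1/U'$ factor one cannot control this term in the degenerate setting. Moreover, the boundary corrections are not simply ``cut off'': the paper splits $\dg F = \beta_a + \beta_b + F^{(1)}$ (Lemma \ref{lem:splitting}) and further splits $\dg\beta_\bullet = \gamma_\bullet + \nu_\bullet$ and $\nu_\bullet = \nu^{(1)}_\bullet + \nu^{(2)}_\bullet$ (Lemmas \ref{lem:Db_splitting}, \ref{lem:nu_splitting}), treating $\nu^{(1)}_\bullet$ via a Duhamel formula with the linear propagator $S(t_2,t_1)$ and extracting the $\log$-growth and the necessity of the weight $\min(1,\dist(y,\p I))$ in the $H^2$ statement from the explicit structure of the boundary layer (Proposition \ref{prop:nu}). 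The decay rates $\l t\r^{-2}$ and $\l t\r^{-4}$ are then obtained not from a direct non-stationary-phase computation as in Couette, but from the weighted elliptic duality argument of Lemma \ref{lem:damp}, which trades oscillation of $\e^{iktU}$ for one power of $t$ per derivative and crucially needs the weighted $H^2$ bound with the boundary factor that you have not constructed.
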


These results extend the inviscid damping and scattering results of \cite{Zill6}
to the \emph{mildly degenerate} setting, which in particular allows for $U'$ to
converge to zero or infinity as $y$ approaches the boundary.

\subsection{Mildly degenerate flows}
\label{sec:mildly_degen}
We call the coefficient functions $U,B$ \emph{mildly degenerate} with
  respect to a torus $\T_{L}$ if the following conditions hold.

\begin{enumerate}[label={(H\arabic*)}]
\item\label{H1} There exists a constant $\gamma_0\in(0,1)$ such  that 
\begin{align}\label{eq:mildy1}
k^{2}|U'(y)| - \frac12\sign(U'(y))U'''(y) \geq \gamma_0 k^{2}|U'(y)|, \qquad \forall y\in I, k \in \frac{2\pi}{L}\mathbb{Z}\setminus\{0\}.
\end{align}
\item \label{H2}There exists $\eps_0 < \infty$  such that
\begin{align}\label{eq:mildy2}
|B(y)|+ |B'(y)| +|U''(y)|\leq \eps_0 |U'(y)|, \qquad \forall y\in I.
\end{align}

\item\label{H3} 
There exists a collection of smooth functions $\chi_j$ such that
\begin{align}
\sum_{j\in\J}\chi_j^2=1, \qquad \forall y\in I.
\end{align}
and so that the support of each function $\chi_{j}$, $\spt \chi_j =: I_j$ is an
interval with
\begin{align}\label{eq:mildly31}
\inf_{j\in \J}|I_j|=:\kappa >0,
\end{align}
and
\begin{align}\label{eq:mildly32}
\sup_{j\in\J}\left\|\frac{\max_{y\in I_j} U'(y)}{U'(y)}\right\|_{L^\infty(I_j)}<\infty.
\end{align} 
In light of \eqref{eq:mildly31}, this further implies that for some
  constant $C$ 
\begin{align}\label{eq:part1}
  \sup_{j\in\J} \|\chi_j\|_{W^{1,\infty}}< C (1+ \frac{1}{\kappa}), \qquad   \sup_{j\in\J} \|\chi_j\|_{W^{2,\infty}}< C^{2} (1+ \frac{1}{\kappa})^{2}.
\end{align}

\end{enumerate}
Unless otherwise stated, we will assume throughout the article that $U$ and $B$
are mildly degenerate.
Furthermore, for some estimates we will require a perturbation condition:
\begin{enumerate}[resume, label={(H\arabic*)}]
\item \label{P} Let $U,B$ be mildly degenerate and let $\epsilon_{0}, \kappa$
  be as in the definition. Then we say that the flow is perturbative or satisfies a smallness
  condition if $ \epsilon_{0} \leq \frac{k}{2}$ and $ \epsilon_{0} C(1+1/\kappa)^{2} \leq
  \frac{\gamma_{0} k^{2}}{4}$ (c.f. Section \ref{sec:auxil-funct}).
\end{enumerate}
Let us briefly comment on these conditions:
\begin{itemize}
\item The condition \ref{H1} yields that the Laplacian is a strictly elliptic
  operator on $L^{2}(\T_{L} \times I, |U'(y)| \dd x \dd y)$ for any interval $I$ (c.f.
  Lemma \ref{lem:weighted_elliptic}).
  Combined with the estimates in \ref{H2}, this allows us to control the
  right-hand-side of equation \eqref{eq:Euler} in a weighted negative Sobolev
  space.
\item A prototypical example of a mildly degenerate flow is given by
  $U(y)=\e^{\alpha y}, B(y)=C \e^{\alpha y}$, where condition \ref{H1} imposes a
  constraint on $\alpha$. Considering that $\sinh(ky) \sin(kx)$ is in the kernel
  of the Laplacian this condition seems necessary.
  Condition \ref{H3} then corresponds to a partition of $I$ using (dyadic) level
  sets of $U'$. Using a newly introduced localization procedure, we establish
  our main results by constructing localized pseudodifferential weights adapted
  to this covering.
\item As we discuss in following, this prototypical setting also allows us to
  consider circular flows with power law singularities or zeros as $r\downarrow
  0$ or $r \uparrow \infty$ on $\R^{2}\setminus \{0\}$.
\item If for instance $U(y)=\cos(y)$ or $U(y)=y^{2}$, the condition
  \eqref{eq:mildly31} can relaxed to allow shrinking dyadic intervals. However,
  in that case further cancellation or decay due to symmetry, Hardy's inequality
  or higher decay of $B(y)$ has to be assumed. The remaining obstacle to treat
  such non-mildly degenerate flows is then in improving \ref{H2} to
  \begin{align}
    |B'(y)| \leq \eps_{0} |U'(y)|,
  \end{align}
  which would require improved commutator estimates. 
\item The smallness condition \ref{P} quantifies closeness to Taylor-Couette
  flow and is weaker the larger $k$ is. It is imposed so that the
  right-hand-side in \eqref{eq:Euler} in the end yields a small perturbation to
  the transport semigroup. It is not optimal and, indeed, as shown in
  \cite{Zhang2015inviscid} if instead of estimating by absolute values one exploits signs and
  cancellations, a weaker condition can be obtained.

  The condition on $\kappa$ in our applications follows by \ref{H1}, since we
  choose the sets $I_{j}$ according to (dyadic) level sets of $U'$ and can
  control the growth and decay of $U'$.
\end{itemize}

By expanding the solution $\omega$ to \eqref{eq:Euler} as a Fourier series in the $x$ variable, namely
\begin{align}
\omega(t,x,y)=\sum_{k\neq 0} \omega_k(t,y)\e^{ikx}, \qquad  \psi(t,x,y)=\sum_{k\neq 0} \psi_k(t,y)\e^{ikx},
\end{align} 
we can perform a $k$-by-$k$ analysis of the linearized equations. Thus, studying \eqref{eq:Euler} is equivalent
to analyzing the collection of one-dimensional problems
\begin{align}\label{eq:Eulerk}
\begin{cases}
\de_t \omega +ik U(y) \omega = ik B(y)  \psi, \quad &\mbox{in } I, \\
-\Delta_k \psi:=-(-k^2+\de_{yy})\psi = \omega, &\mbox{in } I,\\
ik\psi= 0, &\mbox{on } \de I,
\end{cases}
\end{align}
and
\begin{align}
\omega(0,y)=\omega^{in}(y),  \quad \mbox{in } I,
\end{align}
where we do not keep track of the index $k$ to simplify notation. It is clear that the $k=0$ mode is conserved by the 
above equation, and that the analysis is the same for positive and negative $k$. Thus, in what follows we restrict ourselves
to the case $k\geq 1$. An equivalent formulation of the above problem is sometimes called \emph{scattering formulation},
and can be derived as follows. We denote by $S(t)$
the solution operator corresponding to the transport operator $U\de_x$. 
More explicitly, 
\begin{align}
g(x,y)\mapsto (S(t)g)(x,y)=g(x+tU(y),y).
\end{align}
Let $U,B$ be given, and let $\omega,\psi$ be the solution to \eqref{eq:Eulerk}. Accordingly, 
we define the \emph{scattered vorticity} and \emph{scattered streamfunction}
by
\begin{align}
F(t)=S(-t)\omega(t), \qquad \Psi(t)=S(-t)\psi(t).
\end{align}
It is not hard to check that $\omega,\psi$ solve \eqref{eq:Eulerk} if and only if $F,\Psi$
solve
\begin{align}\label{eq:Eulerkscat}
\begin{cases}
\de_t F =ikB(y)\Psi,   \quad &\mbox{in } I, \\
E_t \Psi:=(-k^2+(\de_y-iktU'(y))^2)\Psi  = F, &\mbox{in } I,\\
ik\Psi= 0, &\mbox{on } \de I,
\end{cases}
\end{align}
with
\begin{align}
F(0,y)=\omega^{in}(y),  \quad \mbox{in } I.
\end{align}
The function $F$ is sometimes called ``profile'', and it is often studied in dispersive equations. In this context,
it is the object which measures the difference between the passive scalar and full linearized (or nonlinear) dynamics.

\begin{remark}
\label{remark:boundary_data}  
We note that the equations decouple in $k$ and the evolution of $F$ is trivial
for $k=0$. Hence, we consider $k \in \frac{2\pi}{L} \mathbb{Z} \setminus\{0\}$ as a
given parameter.
In particular, in this case $\Psi|_{\de I}=0$ and thus
\begin{align}
  \de_{t} F|_{\de I}=0
\end{align}
for all times and boundary values are preserved.
That is, for all $t>0$
\begin{align}
  F(t)|_{\p I}= \omega^{in}|_{\p I}.
\end{align}
We further remark that, since the spectral gap of $E_{t}$ involves
$k^{2}$, $ik \Psi$ asymptotically scales as $|k|^{-1}$. Thus, and in view of
condition \ref{H1}, estimates for larger $k$ are simpler to establish than for
smaller $k$.
\end{remark}

Our main result \eqref{thm:main} has a natural formulation frequency by frequency, which we state here below.

\begin{theorem}\label{thm:L2}
There exists a symmetric positive operator $A(t):L^2 \rightarrow L^2$, with $C^1$
  dependence on $t$, such that any solution $\omega$ of \eqref{eq:Eulerk} satisfies
  \begin{align}
    \frac1C\|\omega(t)\|_{L^2}^2 \leq\l \omega(t), A(t) \omega(t) \r_{L^2} \leq C \|\omega(t)\|_{L^2}^2, \qquad \forall t\geq0,
  \end{align}
  for some constant $C\geq 1$, and
    \begin{align}
    \ddt \l \omega(t), A(t) \omega(t) \r_{L^2} +\frac12 \int_{I} |U'(y)| |\nabla_k \psi (t,y)|^{2}  \dd y \leq 0, \qquad \forall t\geq0.
  \end{align}
  In particular, this implies $L^2$ stability and
  \begin{align}
    \int_{I} |U'(y)| |\nabla_k \psi (y)|^{2} \dd y  \in L^1_t(0,\infty).
  \end{align}
    If further, the initial data is in $H^{1}$ or $H^{2}$, then also 
  \begin{align*}
    \|\e^{ikUt}\omega(t, y)\|_{H^{1}} \leq C \|\omega^{in}\|_{H^{1}}, \\
    \|\min(1, \dist(y, \p I)) \de_{yy}\e^{ikUt}\omega(t, y)\|_{L^{2}} \leq C \|\omega^{in}\|_{H^{2}},
  \end{align*}
  and
  \begin{align*}
    \int |U'(y)| |\nabla_k \psi (t)|^{2}  \dd y \leq \frac{C}{\l kt\r^{2}} \|\omega^{in}\|^2_{H^{1}}, \\
    \int |U'(y)| |\psi (t)|^{2} \dd y \leq \frac{C}{\l kt\r^4}\|\omega^{in}\|^2_{H^{2}}.
  \end{align*}
\end{theorem}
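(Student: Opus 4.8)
The plan is to prove Theorem \ref{thm:L2} by constructing the operator $A(t)$ through a microlocal/pseudodifferential scheme adapted to the covering $\{I_j\}$ from \ref{H3}, working in the scattering formulation \eqref{eq:Eulerkscat}. First I would record the basic energy structure of the scattered system: differentiating $\|F(t)\|_{L^2}^2$ one picks up only the term $2\Re\langle F, ikB\Psi\rangle$, which by \ref{H2} is bounded by $\eps_0 \int |U'| |F| |\Psi|$, and is therefore controlled once one has a weighted elliptic estimate relating $\Psi$ to $F$. The key point, encoded by \ref{H1} via Lemma \ref{lem:weighted_elliptic}, is that $E_t$ (equivalently $-\Delta_k$ after conjugation by $S(t)$) is uniformly elliptic on $L^2(|U'|\dd y)$ with a spectral gap proportional to $k^2$, so that $\int |U'| |\nabla_k \psi|^2$ can play the role of the dissipative term. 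The construction of $A(t)$ should be a sum $A(t) = \sum_j \chi_j A_j(t) \chi_j$ of local pieces, each $A_j(t)$ a symbol in $\de_y$ designed so that the commutator $[E_t, \cdot]$ (coming from $\ddt$ acting through the $t$-dependence of $E_t$) produces, after pairing against $F$, a negative term matching $-c\int|U'||\nabla_k\psi|^2$ up to errors that are small by \ref{P}.

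Concretely, the steps I would carry out are: (i) establish the weighted elliptic estimate — $\|\nabla_k \psi\|^2_{L^2(|U'|)} \lesssim \langle \omega, (-\Delta_k)^{-1}\omega\rangle$-type bounds and the ellipticity of $E_t$ with constant $\gamma_0 k^2$ — relying on \ref{H1}; (ii) on each patch $I_j$ treat $U'$ as essentially constant (this is exactly the content of \eqref{eq:mildly32}), so that on $I_j$ the problem looks like a perturbation of Couette/Taylor-Couette, and borrow the weight construction of \cite{Zill6} to get a local symmetric operator $A_j(t)$ with $A_j \approx \mathrm{Id}$ and $\ddt\langle F, A_j F\rangle_{I_j} \leq -c\int_{I_j}|U'||\nabla_k\psi|^2 + (\text{errors})$; (iii) glue via the partition of unity, using the $W^{1,\infty}, W^{2,\infty}$ bounds \eqref{eq:part1} on $\chi_j$ to control commutator errors between patches, and absorb all error terms using the smallness \ref{P}, i.e. $\eps_0 \leq k/2$ and $\eps_0 C(1+1/\kappa)^2 \leq \gamma_0 k^2/4$; (iv) deduce $L^2$ stability and $\int|U'||\nabla_k\psi|^2 \in L^1_t$ by integrating the differential inequality; (v) for the higher-regularity statements, commute $\de_y$ (resp. $\min(1,\dist(y,\de I))\de_{yy}$) through \eqref{eq:Eulerkscat}, noting from Remark \ref{remark:boundary_data} that boundary values of $F$ are frozen, so the cut-off weight near $\de I$ handles the boundary term; the coefficients $U'', B'$ appearing in the commutators are again controlled by $\eps_0|U'|$ via \ref{H2}, giving uniform-in-time $H^1$/weighted-$H^2$ bounds on $\e^{ikUt}\omega(t) = F(t)$; (vi) finally, interpolate: the $L^1_t$-integrability of $\int|U'||\nabla_k\psi|^2$ together with uniform higher-order bounds and the quantitative ellipticity of $E_t$ (the symbol $-k^2 - (\xi - ktU')^2$ has its zero set drifting, so pairing $H^1$ regularity of $F$ against $E_t^{-1}$ yields $\langle kt\rangle^{-2}$ decay, and $H^2$ yields $\langle kt\rangle^{-4}$ for $\int|U'||\psi|^2$) gives the stated decay rates.

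The main obstacle I expect is step (iii): making the localized pseudodifferential weights genuinely compatible across patches when the weight $|U'(y)|$ varies over many dyadic scales. The symbols $A_j(t)$ are built scale-by-scale, but the error terms generated by $[\chi_j, E_t]$ and by the mismatch of adjacent local weights must all be dominated by the single good term $\int|U'||\nabla_k\psi|^2$ and by the smallness budget in \ref{P}; since $\|\chi_j\|_{W^{2,\infty}}$ only degrades like $(1+1/\kappa)^2$ and \ref{P} is tuned precisely to that power, the bookkeeping is delicate and is the crux of where \ref{H3} is used in full strength. A secondary technical point is the boundary: because $U'$ may vanish or blow up as $y \to \de I$, one cannot expect $\de_{yy}F$ to be globally $L^2$, which is exactly why the theorem only claims control of $\min(1,\dist(y,\de I))\de_{yy}(\e^{ikUt}\omega)$ — verifying that this weighted second derivative closes requires a Hardy-type inequality against $|U'|$ near the boundary, consistent with the partition $\{I_j\}$ having intervals of length bounded below by $\kappa$.
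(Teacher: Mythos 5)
Your steps (i)--(iv) track the paper's proof of the $L^2$ part very closely: the weighted elliptic estimate from \ref{H1} is Lemma~\ref{lem:weighted_elliptic}, the local-to-global comparison $\int|U'||\nabla_k\psi|^2 \lesssim \sum_j\int_{I_j}|U'||\nabla_k\psi_j|^2$ is Lemma~\ref{lem:local_elliptic}, the gluing $A=\sum_j\chi_j A_j\chi_j$ with the error estimate in terms of $\eps_0$ is Proposition~\ref{prop:reduction}, and the bilipschitz-patch construction of $A_j$ (the paper cites \cite{Zill5} for this, not \cite{Zill6}) with absorption under \ref{P} is exactly Lemma~\ref{lem:Bilipschitz} combined with the concluding energy argument. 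So that half of the theorem is correct and essentially the paper's argument.

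The gap is in step (v). You propose to ``commute $\de_y$ (resp.\ $\min(1,\dist(y,\de I))\de_{yy}$) through the scattering formulation'' and argue that the resulting commutator coefficients $U'',B'$ are controlled by $\eps_0|U'|$ via \ref{H2}. This does not close: the relevant commutator is $[\de_y,\,(\de_y-iktU')^2]$, which produces a term proportional to $iktU''(\de_y-iktU')$. The $|U''|\le\eps_0|U'|$ bound does not remove the \emph{explicit factor of $t$}, so a plain $\de_y$-energy estimate on $F$ grows linearly in time. The paper's fix is the modified derivative $\dg=\dc(y)\de_y$ of \eqref{eq:newdiff}, with $\dc$ built from $\max_{I_j}|U'|/U'$; the resulting commutator identity \eqref{eq:comm0}, $[\de_y-iktU',\dg]=\dc^{(1)}\de_y-\tfrac{U''}{U'}\dc(\de_y-iktU')$, absorbs the $ikt U''$ term into a bounded multiple of $\de_y-iktU'$ precisely because of the $1/U'$ normalization. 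Without that modification, step (v) fails.

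A second, related underestimate: you suggest the frozen boundary values plus the cut-off weight ``handle the boundary term.'' In the paper, the boundary contributions are a substantial part of the work. Taking $\dg$ of \eqref{eq:Eulerkscat} produces a modified elliptic operator for $\tPhi=\dg\Psi$ with \emph{non-zero} boundary data, which is then split off against the homogeneous solutions $h_\bullet,\tilde h_\bullet$ of \eqref{eq:homo1}--\eqref{eq:homo2} (Lemma~\ref{lem:H1equation}), and $\dg F$ is further decomposed as $\beta_a+\beta_b+F^{(1)}$ (Lemma~\ref{lem:splitting}); the $\beta$'s obey a non-trivial evolution whose control requires the basis-expansion estimates of Lemma~\ref{lem:boundaryCorrection} and the modified multipliers of Lemma~\ref{lem:modifiedMultiplier}. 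For the $H^2$ statement there is yet another layer: $\dg\beta$ is split into $\gamma+\nu$, and $\nu$ is estimated by a Duhamel formula \eqref{eq:nu1defi} using the propagator $S(t_1,t_2)$ and the damping Lemma~\ref{lem:damp}, not by a direct energy bound, because $\nu$ genuinely diverges in unweighted $L^p$. Finally, the decay rates in step (vi) are not obtained by interpolation but by the oscillatory integration-by-parts argument (Lemma~\ref{lem:damp} and its corollary) together with the uniform $H^1$/weighted-$H^2$ bounds. These structural elements are the bulk of Sections~3 and~4 and are missing from your outline.
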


Higher order stability can be stated in terms of modified differential operators. Let 
\begin{align}\label{eq:newdiff}
\dc(y):=\sum_{j\in\J}\frac{\|U'\|_{L^\infty(I_j)}}{U'(y)}\chi_j(y), \qquad \dg:=\dc(y)\de_y. 
\end{align}
Our choice of $\dg$ here is determined by satisfying two opposing objectives.
On the one hand, we want an operator that is very similar to the usual derivative $\de_y$.
On the other hand, $\dg$ should have good commutation properties with
$\de_{y}-iktU'$, which leads to considering $(U')^{-1}\de_y$.

\subsection{Notation and conventions}
\begin{itemize}
\item $C\geq 1$ is a generic positive constant independent of $k$.
\item $\l\cdot,\cdot\r$ is the $L^2$ scalar product in $y\in I$.
\item $\nabla_k=(ik,\de_y)$
\item $[A,B]=AB-BA$
\item $\l x\r=\sqrt{1+x^2}$
\end{itemize}

\section{Localization and \texorpdfstring{$L^2$}{L2} stability}\label{sec:simple_case}
Let $\omega$ be a solution to \eqref{eq:Eulerk}.
Then we claim that $\omega$ satisfies
\begin{align}
\ddt \|\omega\|^2_{L^2} \leq 2 \int_I |B'(y)||\nabla_k\psi(y)|\dd y\leq C \int_I |U'(y)||\nabla_k\psi(y)|\dd y.
\end{align}
Indeed, using the anti-symmetry of $ikU$ and $ikb$ and integrating by parts, we have
\begin{align}
\ddt \|\omega\|^2_{L^2} &=2\Re\l ik B\psi, \omega\r=-2\Re\l ik B\psi, \Delta_k \psi\r
=-2\Re\l ik B\psi, \de_{yy} \psi\r=2\Re\l ik B'\psi, \de_y \psi\r,
\end{align}
so the claim follows by the Cauchy--Schwarz inequality and \eqref{eq:mildy2}. 
Notice that here we used that $\psi=0$  on $\de I$ in the integration by parts.

In the following, we need a more localized version of the above estimate,
adapted to the partition $\chi_j$, where we morally would want to replace $I$ by
$I_{j}=\text{supp}(\chi_{j})$.
As the Biot Savart law is \emph{non-local}, such an estimate can not hold
exactly. However, we show that in summed sense and in suitably modified and
weighted spaces such a localization can indeed be established.

We begin with a preliminary key lemma establishing weighted elliptic estimates.

\begin{lemma}\label{lem:weighted_elliptic}
Let $J\subset I$ be an interval. For any complex-valued $g\in H^1_0(J)$ and any $k\geq 1$ there holds
\begin{align}
- \l  |U'| g, \Delta_k g\r \geq \gamma_{0}\int_J |U'(y)||\nabla_k g(y)|^2\dd y.
\end{align}
In particular,  the left-hand-side is a positive definite bilinear form in $\nabla_k g$.
\end{lemma}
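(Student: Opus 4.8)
The plan is to integrate by parts twice and reduce the estimate to the pointwise hypothesis \ref{H1}. Writing $-\Delta_k = k^2 - \de_{yy}$, we have
\begin{align*}
-\l |U'| g, \Delta_k g\r = k^2\int_J |U'(y)|\,|g(y)|^2\,\dd y - \l |U'| g, \de_{yy}g\r,
\end{align*}
and since $g\in H^1_0(J)$ integrating by parts in the second term produces no boundary contribution:
\begin{align*}
-\l |U'| g, \de_{yy}g\r = \int_J |U'(y)|\,|\de_y g(y)|^2\,\dd y + \int_J \big(\de_y |U'(y)|\big)\, g(y)\,\overline{\de_y g(y)}\,\dd y.
\end{align*}
Because $|\nabla_k g|^2 = k^2|g|^2 + |\de_y g|^2$, the first two resulting terms reproduce $\int_J |U'|\,|\nabla_k g|^2$ exactly, so everything reduces to controlling the cross term.

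For the cross term I would pass to real parts — which is precisely the content of the asserted positive-definiteness of the quadratic form — and use $\Re\big(g\,\overline{\de_y g}\big)=\tfrac12\de_y|g|^2$ together with $\de_y|U'|=\sign(U')\,U''$. This last identity is legitimate since mild degeneracy, and in particular \eqref{eq:mildly32}, forces $U'$ to keep a fixed sign on $I$, so that $|U'|\in C^2$ away from $\de I$. A second integration by parts, again with vanishing boundary terms because $g\in H^1_0(J)$, gives
\begin{align*}
\Re\int_J \big(\de_y|U'|\big)\, g\,\overline{\de_y g}\,\dd y = \tfrac12\int_J \sign(U')U''\,\de_y|g|^2\,\dd y = -\tfrac12\int_J \sign(U')U'''\,|g|^2\,\dd y,
\end{align*}
and collecting terms yields the identity
\begin{align*}
-\Re\l |U'| g, \Delta_k g\r = \int_J |U'(y)|\,|\de_y g(y)|^2\,\dd y + \int_J \Big(k^2|U'(y)| - \tfrac12\sign(U'(y))U'''(y)\Big)|g(y)|^2\,\dd y.
\end{align*}

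The proof then closes by invoking \ref{H1} pointwise in the last integrand (whose weight $|g|^2$ is nonnegative), bounding it below by $\gamma_0 k^2|U'|\,|g|^2$, and by using $\gamma_0<1$ to keep $\int_J |U'|\,|\de_y g|^2 \geq \gamma_0\int_J |U'|\,|\de_y g|^2$; adding the two gives $-\Re\l |U'| g,\Delta_k g\r \geq \gamma_0 \int_J |U'|\,|\nabla_k g|^2$, and positive-definiteness is immediate since the right-hand side vanishes only when $\nabla_k g \equiv 0$. The computation itself is short; the points needing genuine care are justifying the two integrations by parts in the presence of the weight $|U'|$, which may degenerate to $0$ or $\infty$ at $\de I$ (handled by $g\in H^1_0(J)$ and the structure of mildly degenerate flows near $\de I$), and the bookkeeping between the complex $L^2$ pairing and its real part. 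I also note that the sign hypothesis \ref{H1} enters essentially: were $U'$ allowed to vanish in the interior, the distributional $\de_{yy}|U'|$ would carry extra negative $\delta$-contributions at those zeros that would obstruct the lower bound, so the absence of interior zeros — a feature of the mildly degenerate class — cannot be dropped here.
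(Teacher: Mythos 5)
Your proof follows essentially the same route as the paper's: two integrations by parts, reducing $-\Re\langle|U'|g,\Delta_k g\rangle$ to $\int_J\big(k^2|U'|-\tfrac12\sign(U')U'''\big)|g|^2 + |U'||\de_y g|^2$, and then invoking \ref{H1} pointwise. The one place your justification diverges from the paper, and where your closing remark is off, is in handling $\de_{yy}|U'|$. The paper does not rely on $U'$ being nonvanishing; it uses the distributional identity $\de_{yy}|U'|=\sign(U')U'''+2\sum_{\bar y:\,U'(\bar y)=0}\delta_{\bar y}\,U''(\bar y)$ and then observes that the $\delta$-terms vanish because \ref{H2} forces $U''(\bar y)=0$ whenever $U'(\bar y)=0$. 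So interior zeros of $U'$ would not obstruct the bound — the mechanism neutralizing them is \ref{H2}, not their absence, and not \ref{H1} (which you call a "sign hypothesis," though it is an inequality on $U'''$). Your alternative argument — that \eqref{eq:mildly32} forces $U'$ to keep a fixed sign on each $I_j$, hence on $I$ — is itself fine as a sufficient justification, but the remark that the result would fail otherwise misattributes the role of the hypotheses.
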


\begin{proof}
We integrate by parts twice and use that $g$ vanishes at the boundary to obtain
\begin{align}
- \l g |U'|, \Delta_k g\r  
&= \int_J \left[|U'(y)| |\nabla_k g(y)|^{2} +\frac12 \de_{y}|U'(y)| \de_{y}|g(y)|^{2} \right]\dd y \notag\\
&= \int_J\left[ |U'(y)| |\nabla_k g(y)|^{2}- \frac{1}{2}|g(y)|^{2} \de_{yy}|U'(y)|\right]\dd y.
\end{align}
Since $U$ and $B$ are mildly degenerate, in the sense of weak derivatives we have
\begin{align}\label{eq:weakder}
\de_{yy}|U'|=\sign(U')U''' +2 \sum_{\bar{y}: U'(\bar{y})=0} \delta_{\bar{y}} U''(\bar{y})=\sign(U')U''',
\end{align}
and therefore we can exploit \eqref{eq:mildy1} to deduce that 
\begin{align}
- \l g |U'|, \Delta_k g\r   
 &= \int_J\left[ |U'(y)| |\nabla_k g(y)|^{2}- \frac{1}{2}|g(y)|^{2} \sign(U'(y))U'''(y)\right]\dd y\notag\\
&= \int_J\left[ \left(k^2|U'(y)|- \frac{1}{2}\sign(U'(y))U'''(y) \right) |g(y)|^{2}+|U'(y)| |\de_y g(y)|^{2}\right]\dd y\notag\\
&\geq \gamma_{0}\int_J |U'(y)||\nabla_k g(y)|^2\dd y,
\label{eq:elli1}
\end{align}
which is what we need to conclude the proof.
\end{proof}

\subsection{Localized potentials}\label{sub:localizedpote}
In order to properly localize the streamfunction in \eqref{eq:Eulerk}, we introduce the following auxiliary problems.
For any $j\in \J$, let 
\begin{align}\label{eq:Ajs}
A_j(t): L^2(I_j) \rightarrow L^2(I_j)
\end{align} be a symmetric operator such that $t\mapsto A_j(t)$ is 
continuously differentiable. Define the localized potentials $\psi_j, \psi_{j,A}$ by 
\begin{equation}\label{eq:local1}
\begin{cases}
-\Delta_k \psi_j = \chi_j \omega, \quad &\mbox{in } I_j , \\
\psi_{j}= 0, &\mbox{on } \de I_j, 
\end{cases}
\end{equation}
and
\begin{equation}\label{eq:local2}
\begin{cases}
-\Delta_k \psi_{j,A} = A_j(t)\chi_j \omega, \quad &\mbox{in }  I_j , \\
\psi_{j,A}= 0, &\mbox{on }  \de I_j,
\end{cases}
\end{equation}
respectively. When referring to the scattering formulation \eqref{eq:Eulerkscat}, we will make use of the 
scattered localized potentials $\Psi_j, \Psi_{j,A}$ defined by 
\begin{equation}\label{eq:local1scat}
\begin{cases}
-E_{t} \Psi_j = \chi_j F, \quad &\mbox{in } I_j , \\
\Psi_{j}= 0, &\mbox{on } \de I_j, 
\end{cases}
\end{equation}
and
\begin{equation}\label{eq:local2scat}
\begin{cases}
-E_{t} \Psi_{j,A} = A_j(t)\chi_j F, \quad &\mbox{in }  I_j , \\
\Psi_{j,A}= 0, &\mbox{on }  \de I_j.
\end{cases}
\end{equation}
The following lemma  relates a term that commonly arises in our computations with its localized version.
\begin{lemma}\label{lem:local_elliptic}
For every $k\geq 1$ we have
\begin{align}
\int_I |U'(y)||\nabla_k \psi(y)|^2\dd y \leq C\sum_{j\in \J}\int_{I_j} |U'(y)||\nabla_k \psi_j(y)|^2\dd y,
\end{align}
for some $C\geq 1$, independent of $k$.
\end{lemma}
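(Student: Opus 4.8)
The plan is to compare the global streamfunction $\psi$, solving $-\Delta_k\psi=\omega$ on $I$, with the sum of localized streamfunctions $\psi_j$, each solving $-\Delta_k\psi_j=\chi_j\omega$ on $I_j$. Since $\sum_j\chi_j^2=1$, one has $\omega=\sum_j\chi_j(\chi_j\omega)$, so the natural ansatz is to test the equation for $\psi$ against $\sum_j\chi_j^2|U'|\psi$ (or a closely related weighted combination) and to recognize each piece using the localized elliptic problems. Concretely, I would write
\begin{align*}
\int_I |U'|\,|\nabla_k\psi|^2\,\dd y \;\lesssim\; -\big\langle |U'|\psi,\Delta_k\psi\big\rangle \;=\; -\sum_{j\in\J}\big\langle |U'|\chi_j^2\psi,\Delta_k\psi\big\rangle,
\end{align*}
using Lemma~\ref{lem:weighted_elliptic} with $J=I$ for the first inequality (applied to $g=\psi$, which is legitimate since $\psi\in H^1_0(I)$). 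The goal is then to bound each summand by the corresponding $\int_{I_j}|U'|\,|\nabla_k\psi_j|^2\,\dd y$.

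The key computational step is an integration by parts that moves the $\chi_j^2$ weight onto $\psi_j$ rather than $\psi$. Since $-\Delta_k\psi_j=\chi_j\omega=\chi_j(-\Delta_k\psi)$ on $I_j$ and $\psi_j=0$ on $\de I_j$, and since $\chi_j$ is supported in $I_j$, one can integrate by parts to get $-\langle |U'|\chi_j^2\psi,\Delta_k\psi\rangle = \langle \nabla_k(|U'|\chi_j^2\psi),\nabla_k\psi\rangle_{I_j}$ and then trade this against $\langle\nabla_k\psi_j,\nabla_k(\chi_j|U'|\,\cdot\,)\rangle$ using the defining equation for $\psi_j$; the cross terms involving $\de_y\chi_j$ and $\de_y|U'|$ are controlled by \ref{H2}, \ref{H3} and \eqref{eq:part1}, exactly as in the $L^2$-stability computation at the start of Section~\ref{sec:simple_case}. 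After Cauchy--Schwarz and absorbing, each term is dominated by $C\int_{I_j}|U'|\,|\nabla_k\psi_j|^2\,\dd y$ plus lower-order pieces of the same type. The ratio-boundedness condition \eqref{eq:mildly32} is what prevents the constant from depending on $j$: it ensures $|U'(y)|$ does not vary too wildly across $I_j$, so that weight factors $|U'(y)|$ appearing on the ``wrong'' side of an inequality can be traded for $\max_{I_j}|U'|$ at bounded cost, and the finite overlap of the $I_j$ makes the sum over $j$ convergent with the same constant $C$.

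The main obstacle I anticipate is handling the non-locality of the Biot--Savart law cleanly: $\psi$ does not localize, so one cannot simply restrict the equation to $I_j$. The trick is that although $\psi$ is global, the test function $\chi_j^2|U'|\psi$ is supported in $I_j$, so all integrations by parts stay inside $I_j$ and no boundary terms from $\de I_j$ survive (because $\chi_j$ vanishes there). The second delicate point is keeping the constant uniform in $k$: here one uses that Lemma~\ref{lem:weighted_elliptic} gives a $k$-independent lower bound $\gamma_0\int|U'|\,|\nabla_k g|^2$, and that $|\nabla_k g|^2 = k^2|g|^2+|\de_y g|^2$ controls both the zeroth- and first-order pieces, so the commutator terms with $\de_y\chi_j$ (which carry no $k$) are easily absorbed for all $k\geq1$. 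Finally, one must check that $\sum_j\|\chi_j\|_{W^{1,\infty}}^2$-type quantities enter only through the fixed constant $C(1+1/\kappa)$ from \eqref{eq:part1}, so that no loss occurs when summing.
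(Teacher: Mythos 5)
Your proposal is correct and follows essentially the same route as the paper: apply Lemma~\ref{lem:weighted_elliptic} to $\psi$ on $I$, decompose $-\Delta_k\psi=\sum_j\chi_j^2(-\Delta_k\psi)=\sum_j\chi_j(-\Delta_k\psi_j)$ using $\spt\chi_j\subset I_j$, integrate by parts (with no boundary terms because $\chi_j$ vanishes on $\de I_j$) to compare $\langle\Delta_k\psi_j,\chi_j|U'|\psi\rangle$ with $\langle\Delta_k\psi_j,|U'|\psi_j\rangle$, bound the commutator terms by $C\int_{I_j}|U'||\nabla_k\psi_j||\nabla_k\psi|$ using \eqref{eq:part1} and \ref{H1} for the $\de_{yy}|U'|=\sign(U')U'''$ piece, and then close by Cauchy--Schwarz and absorption using the bounded overlap of the $I_j$. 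One small remark: the ratio condition \eqref{eq:mildly32} you invoke is not actually needed here --- throughout the computation the weight $|U'(y)|$ always sits on the correct side of the inequalities (it is used unchanged in both the target quantity and the error terms), so no trading against $\max_{I_j}|U'|$ ever occurs; that hypothesis only becomes relevant later (e.g.\ in Lemma~\ref{lem:Bilipschitz}).
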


\begin{proof}
From Lemma \ref{lem:weighted_elliptic}, we have
\begin{align}\label{eq:intpart}
\gamma_{0} \int_I |U'(y)||\nabla_k \psi(y)|^2\dd y\leq -\l \Delta \psi, |U'|\psi\r
=-\sum_{j\in\J}\l \Delta \psi_j, \chi_j|U'|\psi\r.
\end{align}
Recalling \eqref{eq:local1} and using the product formula
\begin{align}
\Delta_k (|U'|\psi\chi_j)=\de_{yy}|U'|\psi\chi_j+2\de_y|U'|\de_y(\psi\chi_j)+|U'|[\Delta_k\psi_j+2\de_y\psi\chi_j'+\psi\chi_j''],
\end{align}
for each fixed $j\in\J$, several integration by parts yield
\begin{align}\label{eq:estim0}
\l \Delta_k \psi_j, |U'|\psi\chi_j\r
&=\l  \psi_j, \de_{yy}|U'|\psi\chi_j+2\de_y|U'|\de_y(\psi\chi_j)+|U'|[\Delta_k\psi_j+2\de_y\psi\chi_j'+\psi\chi_j'']\r\notag\\
&=\l  \Delta_k\psi_j,|U'|\psi_j\r+\l  \psi_j, \de_{yy}|U'|\psi\chi_j\r
+2\l  \de_y|U'|,\psi_j\de_y(\psi\chi_j)\r
+\l  |U'|\psi_j,2\de_y\psi\chi_j'+\psi\chi_j''\r\notag\\
&=\l  \Delta_k\psi_j,|U'|\psi_j\r+\l  \psi_j, \de_{yy}|U'|\psi\chi_j\r
-2\l  |U'|,\de_y(\psi_j\de_y(\psi\chi_j))\r
+\l  |U'|\psi_j,2\de_y\psi\chi_j'+\psi\chi_j''\r\notag\\
&=\l  \Delta_k\psi_j,|U'|\psi_j\r-2\l \de_{yy}\psi \chi_j,|U'|\psi_j\r
+\l  \psi_j, \de_{yy}|U'|\psi\chi_j\r\notag\\
&\quad- 2\l\de_y \psi_j ,|U'|\de_y\psi \chi_j\r- 2\l  \de_y\psi_j,|U'|\psi\chi'_j\r- 2\l\psi_j  ,|U'|\de_y\psi \chi'_j\r-\l \psi_j,|U'|\psi\chi''_j\r\notag\\
&=-\l  \Delta_k\psi_j,|U'|\psi_j\r+2k^2\l\psi \chi_j,|U'|\psi_j\r
+\l  \psi_j, \de_{yy}|U'|\psi\chi_j\r\notag\\
&\quad- 2\l\de_y \psi_j ,|U'|\de_y\psi \chi_j\r- 2\l  \de_y\psi_j,|U'|\psi\chi'_j\r- 2\l\psi_j  ,|U'|\de_y\psi \chi'_j\r-\l \psi_j,|U'|\psi\chi''_j\r\notag\\
&=-\l  \Delta_k\psi_j,|U'|\psi_j\r+\l  \psi_j, \de_{yy}|U'|\psi\chi_j\r\notag\\
&\quad- 2\l\nabla_k \psi_j ,|U'|\nabla_k\psi \chi_j\r- 2\l  \de_y\psi_j,|U'|\psi\chi'_j\r- 2\l\psi_j  ,|U'|\de_y\psi \chi'_j\r-\l \psi_j,|U'|\psi\chi''_j\r
\end{align}
We estimate the right-hand side above term by term. Appealing to \eqref{eq:part1}, it is not hard to see that 
\begin{align}
&|\l\nabla_k \psi_j ,|U'|\nabla_k\psi \chi_j\r| +|\l  \de_y\psi_j,|U'|\psi\chi'_j\r|+ |\l\psi_j  ,|U'|\de_y\psi \chi'_j\r|+|\l \psi_j,|U'|\psi\chi''_j\r|\notag\\
&\qquad\leq C\int_{I_j} |U'(y)| \left[|\nabla_k \psi_j(y)|(|\nabla_k \psi(y)|+|\psi(y)|) + | \psi_j(y)|(|\nabla_k \psi(y)|+|\psi(y)|) \right]\dd y\notag\\
&\qquad\leq C\int_{I_j} |U'(y)| |\nabla_k \psi_j(y)||\nabla_k \psi(y)|\dd y.
\end{align}
Moreover, in view of \eqref{eq:mildy1}, \eqref{eq:weakder} and the mild degeneracy assumptions, we have that
\begin{align}
|\l  \psi_j, \de_{yy}|U'|\psi\chi_j\r|=|\l  \psi_j, \sign(U')U'''\psi\chi_j\r|\leq C\int_{I_j}|U'(y)||\nabla_k \psi_j(y)||\nabla_k \psi(y)|\dd y.
\end{align}
Thus,  \eqref{eq:intpart}, \eqref{eq:estim0}  and Lemma \ref{lem:weighted_elliptic} yield
\begin{align}
 &\gamma_{0}\int_I |U'(y)||\nabla_k \psi(y)|^2\dd y\\
 &\qquad\qquad\leq \sum_{j\in \J}\l  \Delta_k\psi_j,|U'|\psi_j\r+ C\sum_{j\in \J}\int_{I_j}|U'(y)||\nabla_k \psi_j(y)||\nabla_k \psi(y)|\dd y\notag\\
 &\qquad\qquad\leq -\gamma_0\sum_{j\in \J}\int_{I_j} |U'(y)||\nabla_k \psi_j(y)|^2\dd y+C\sum_{j\in \J}\int_{I_j}|U'(y)||\nabla_k \psi_j(y)||\nabla_k \psi(y)|\dd y\notag\\
 &\qquad\qquad\leq C\sum_{j\in \J}\int_{I_j} |U'(y)||\nabla_k \psi_j(y)|^2\dd y +\frac{\gamma_{0}}2 \int_{I} |U'(y)| |\nabla_k \psi(y)|^{2}\dd y.
\end{align}
Hence, collecting all of the above and applying Lemma \ref{lem:weighted_elliptic} once more we find
\begin{align}
\frac{\gamma_{0}}2\int_I |U'(y)||\nabla_k \psi(y)|^2\dd y 
\leq C\sum_{j\in \J}\int_{I_j} |U'(y)||\nabla_k \psi_j(y)|^2\dd y,
\end{align}
which is what we wanted. The proof is over.
\end{proof}

With these results at hand, we can now prove the following localized energy inequality, holding for a fairly general collection
of symmetric of operators.

\begin{proposition}\label{prop:reduction}
Let $A(t):L^2\to L^2$ be the symmetric operator defined by
\begin{align}
A(t)= \sum_{j\in\J} \chi_j A_j(t)\chi_j, 
\end{align}
with $A_j$ as in \eqref{eq:Ajs} and  localized potential as in 
\eqref{eq:local1}-\eqref{eq:local2}.   Then it holds that 
  \begin{align}
   \ddt \l \omega, A(t)\omega  \r \leq \Re\l \omega,  \dot A(t)\omega\r +C\eps_0\sum_{j\in \J} \int_{I_j} |U'(y)| \left[|\nabla_k  \psi_j(y)|^2 +k^2|\nabla_k \psi_{j,A}(y) |^2\right]\dd y.   
  \end{align}
\end{proposition}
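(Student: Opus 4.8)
The plan is to differentiate the quadratic form $\l\omega, A(t)\omega\r = \sum_j \l \chi_j\omega, A_j(t)\chi_j\omega\r$ in time and split the result into a "$\dot A$" contribution and a "transport" contribution coming from the equation $\de_t\omega = -ikU\omega + ikB\psi$. Since each $A_j$ is symmetric, we get
\[
\ddt\l\omega, A(t)\omega\r = \l\omega,\dot A(t)\omega\r + 2\Re\sum_{j\in\J}\l A_j(t)\chi_j\omega, \chi_j\de_t\omega\r.
\]
The first term is exactly $\Re\l\omega,\dot A(t)\omega\r$ in the statement. For the second term I would substitute $\de_t\omega = -ikU\omega+ikB\psi$; the plan is to show that the piece involving $-ikU\omega$ vanishes (or is harmless) and that the piece involving $ikB\psi$ is the one producing the claimed right-hand side.

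For the $-ikU\omega$ piece, the key observation is that $2\Re\sum_j \l A_j\chi_j\omega, -ikU\chi_j\omega\r = -2k\,\Im\sum_j\l A_j\chi_j\omega, U\chi_j\omega\r$. Here I would use that $U$ acts as a real multiplication operator and that $A_j$ is symmetric on $L^2(I_j)$; however $A_j$ and multiplication by $U$ need not commute, so this term does not obviously vanish. The natural move is to rewrite it using $-\Delta_k\psi_j = \chi_j\omega$ and $-\Delta_k\psi_{j,A} = A_j\chi_j\omega$, so that the term becomes $-2k\,\Im\l -\Delta_k\psi_{j,A}, U(-\Delta_k\psi_j)\r$ after undoing the $\chi_j$; integrating by parts twice against $\Delta_k$ and using \ref{H2} to bound $|U''|\lesssim\eps_0|U'|$, one converts the $U''$-commutator terms into $\eps_0\int_{I_j}|U'|\,k^2(|\nabla_k\psi_j|^2 + |\nabla_k\psi_{j,A}|^2)$-type quantities, while the leading symmetric part cancels. (Alternatively, and perhaps more cleanly, one localizes the exact identity $\ddt\|\omega\|_{L^2}^2 = 2\Re\l ikB'\psi,\de_y\psi\r$ derived at the start of Section \ref{sec:simple_case} to the weighted quadratic form; the $U$-term is then handled exactly as in that computation, picking up no contribution from $U$ itself and only boundary-free terms.) For the $ikB\psi$ piece, $2\Re\sum_j\l A_j\chi_j\omega, ik\chi_j B\psi\r = 2\Re\sum_j\l -\Delta_k\psi_{j,A}, ik\chi_j^2 B\psi\r$; integrating by parts twice and using the product rule to move $\chi_j^2 B$ past $\Delta_k$ — paying $B', B''$ and $\chi_j', \chi_j''$ — and then invoking \ref{H2} ($|B|+|B'|\lesssim\eps_0|U'|$), \eqref{eq:part1}, and \eqref{eq:mildly32} to absorb $|U''|$ and the $\chi_j$-derivatives into $\eps_0|U'|$, one arrives at a bound of the form $C\eps_0\sum_j\int_{I_j}|U'|\,|\nabla_k\psi_{j,A}|\,(|\nabla_k\psi_j| + |\nabla_k\psi|)$, after which Lemma \ref{lem:local_elliptic} replaces the nonlocal $\nabla_k\psi$ by the localized $\nabla_k\psi_j$'s and Cauchy--Schwarz (with the extra $k^2$ factored onto the $\psi_{j,A}$ terms as in the statement) finishes it.

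The main obstacle I anticipate is bookkeeping the commutators cleanly: the operators $\chi_j$, multiplication by $U$ or $B$, the second-order operator $\Delta_k$, and the abstract $A_j(t)$ all fail to commute, and one must make sure every resulting term is either a genuine cancellation or is controlled by $\eps_0$ times a weighted $|\nabla_k\cdot|^2$ quantity — in particular that no uncontrolled $|\psi_j|^2$ or $|\psi|^2$ terms without the $|U'|$ weight survive, and that all integrations by parts are legitimate because $\psi_j, \psi_{j,A}$ vanish on $\de I_j$ (this is why the localized Dirichlet problems \eqref{eq:local1}--\eqref{eq:local2} were set up that way). The place where the structure of the hypotheses is really used is \ref{H2}: without $|B'|\lesssim\eps_0|U'|$ one could not close the estimate, which is consistent with the remark in Section \ref{sec:mildly_degen} that improving \ref{H2} is the obstruction to handling more degenerate flows. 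A secondary subtlety is that the factor $k^2$ in front of $|\nabla_k\psi_{j,A}|^2$ must be tracked honestly through the $ikB\psi$ computation — it arises precisely because one of the two factors carries the full $ik$ from the right-hand side of \eqref{eq:Eulerk} — so one should not be tempted to symmetrize in a way that loses it.
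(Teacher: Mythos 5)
Your overall scheme — differentiate the quadratic form, peel off $\Re\langle\omega,\dot A(t)\omega\rangle$, drop the $-ikU\omega$ transport contribution, and handle $ikB\psi$ by substituting $-\Delta_k\psi_{j,A}=A_j\chi_j\omega$, integrating by parts, and closing with \ref{H2}, \eqref{eq:part1} and Lemma~\ref{lem:local_elliptic} — is exactly the paper's strategy, and your bookkeeping of the $ikB\psi$ piece is essentially a correct reconstruction of it. The genuinely interesting part of your write-up is that you notice the transport term is not innocuous.

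Here is the issue, made precise. For self-adjoint $A$ one has
\begin{align}
2\Re\langle -ikU\omega, A\omega\rangle = -\langle \omega, [A,ikU]\omega\rangle,
\end{align}
and the right-hand side vanishes only if $A$ commutes with multiplication by $U$. The paper asserts it is zero ``by symmetry of $A$ and anti-symmetry of $ikU$,'' which by itself is not sufficient; you are right to flag this. However, your proposed repair is not convincing either. Writing the commutator via $\chi_j\omega=-\Delta_k\psi_j$, $A_j\chi_j\omega=-\Delta_k\psi_{j,A}$ turns it into a fourth-order expression $\Im\int U\,\Delta_k\psi_{j,A}\,\overline{\Delta_k\psi_j}$; integrating by parts against $\Delta_k$ here produces boundary contributions on $\partial I_j$ (since $\partial_y\psi_j$, $\partial_y\psi_{j,A}$ do not vanish there) and leaves an undifferentiated $U$-weight rather than $|U'|$, so it does not obviously reduce to an $\varepsilon_0\int_{I_j}|U'|\,|\nabla_k\cdot|^2$-type quantity. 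There is also no reason the ``leading symmetric part cancels'': the residual is precisely $\langle\chi_j\omega,[A_j,ikU]\chi_j\omega\rangle$, which for a general symmetric $A_j$ is simply a genuine term, not an error. The actual resolution, which the paper uses implicitly, is structural rather than estimate-based: the $A_j(t)$ supplied by Lemma~\ref{lem:Bilipschitz} are of the conjugated-multiplier form $A_j(t)=\e^{-iktU}M_j(t)\e^{iktU}$, so that $\dot A_j = \e^{-iktU}\dot M_j\e^{iktU} + [A_j,ikU]$, and the commutator piece of $\dot A_j$ cancels the transport contribution exactly. In other words, the transport term does not vanish on its own; it cancels against part of $\dot A$ when $A$ has this time-conjugation structure. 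With that understanding the proposition (and the remaining bookkeeping you lay out for the $ikB\psi$ term, which matches the paper's) is sound in the way it is actually applied, but a commutator-estimate route of the type you sketch would need $A_j$-specific input and is not a drop-in fix.
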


\begin{proof}
By direct computations and using \eqref{eq:Eulerk}, we find that
\begin{align}
\ddt \l \omega, A(t)\omega  \r
&=\Re\l \omega, \dot A(t)\omega  \r+2\Re\l ik B  \psi , A(t)\omega  \r-2\Re\l ik U \omega, A(t)\omega  \r\notag\\
&=\Re\l \omega, \dot A(t)\omega  \r+2\Re\l ik B  \psi , A(t)\omega  \r,
\end{align}
thanks to the symmetry of $A$ and the anti-symmetry of $ik U$. Now, integrating by parts and using \eqref{eq:local1} and \eqref{eq:local2}, we have
\begin{align}
\l ikB \psi , A(t)\omega  \r&=\sum_{j\in\J} \l ik B  \psi , \chi_j A_j(t)\chi_j \omega  \r=-\sum_{j\in\J} \l ikB  \psi , \chi_j  \Delta_k \psi_{j,A}\r\notag\\
&=k^3\sum_{j\in\J} \l i B  \psi , \chi_j  \psi_{j,A}  \r+k\sum_{j\in\J} \l i\de_y(\chi_j  B  \psi) , \de_{y} \psi_{j,A}  \r.
\end{align}
Thus,  we obtain 
\begin{align}
|\l i kB  \psi , A(t)\omega  \r|
&\leq k^3\sum_{j\in\J} \l i B  \psi , \chi_j  \psi_{j,A}  \r+k\sum_{j\in\J} \l i\de_y(\chi_j  B  \psi) , \de_{y} \psi_{j,A}  \r\notag\\
&\leq \sum_{j\in \J}\bigg[k^3\int_I |B(y)|  |\psi(y)| |\psi_{j,A}(y) |  \chi_j(y)\dd y+ k\int_I |B(y)| |\de_y (\chi_j(y) \psi(y))||\de_y \psi_{j,A}(y)|\dd y \notag\\
&\qquad\quad+ k\int_I |B'(y)| |  \psi(y)|\de_y \psi_{j,A}(y) |\chi_j(y)\dd y\bigg].
\end{align}
We now use \eqref{eq:part1}, \eqref{eq:mildy2} and Lemma \ref{lem:local_elliptic} to deduce that
  \begin{align}
   \ddt \l \omega, A(t)\omega  \r &\leq \Re\l \omega,  \dot A(t)\omega\r +C \sum_{j\in\J}\int_{I_j} \left[|B(y)|+|B'(y)|\right] \left[|\nabla_k  \psi(y)|^2 +k^2|\nabla_k \psi_{j,A}(y) |^2\right]\dd y\notag \\   
   &\leq \Re\l \omega,  \dot A(t)\omega\r +C\eps_0 \sum_{j\in\J}\int_{I_j} |U'(y)| \left[|\nabla_k  \psi(y)|^2 +k^2|\nabla_k \psi_{j,A}(y) |^2\right]\dd y\notag \\   
   &\leq \Re\l \omega,  \dot A(t)\omega\r +C\eps_0 \sum_{j\in\J}\int_{I_j} |U'(y)| \left[|\nabla_k  \psi_j(y)|^2 +k^2|\nabla_k \psi_{j,A}(y) |^2\right]\dd y,
  \end{align}  
and we are done.
\end{proof}

\subsection{The \texorpdfstring{$L^2$}{L2} stability theorem}
Using the reductions of Proposition \ref{prop:reduction}, the proof of Theorem
\ref{thm:L2} reduces to constructing suitable operators $A_{j}(t)$ such that
$\l \omega, A(t)\omega \r$ is a Lyapunov functional. Since on each $I_{j}$ the function $U(y)$ is 
bilipschitz, we can construct adapted Fourier multipliers (strictly speaking just multipliers in a convenient
$L^{2}$ basis, c.f. \cite{Zill5} for other bases). We rely on the following result from \cite{Zill5}.

\begin{lemma}[Bilipschitz case, c.f. \cite{Zill5}]\label{lem:Bilipschitz}
  Suppose that $U$ is bilipschitz on the interval $I_{j}$. There exists
  $A_{j}(t)$ such that 
  \begin{align}
\int_{I_j} |U'(y)| \left[|\nabla_k  \psi_j(y)|^2 +k^2|\nabla_k \psi_{j,A}(y) |^2\right]\dd y &\leq -C \Re\l \chi_{j} \omega, \dot A_{j}(t)\chi_{j}\omega \r, 
  \end{align}
and
  \begin{align}
     \frac1C\|\chi_{j}\omega\|_{L^{2}}^{2}\leq \l \chi_{j} \omega, A_{j}(t)\chi_{j}\omega \r \leq C \|\chi_{j}\omega\|_{L^{2}}^{2}.
  \end{align}  
  Here, $C$  depends on $U$ only via $\frac{\max(|U'|)}{\min(|U'|)}$, and it is therefore independent of $j$. In particular,
    \begin{align}\label{eq:derbound}
\sum_{j\in\J}\int_{I_j} |U'(y)| \left[|\nabla_k  \psi_j(y)|^2 +k^2|\nabla_k \psi_{j,A}(y) |^2\right]\dd y &\leq -C \sum_{j} \Re\l  \chi_{j}\omega, \dot A_{j}(t) \chi_{j}\omega \r.
  \end{align}
\end{lemma}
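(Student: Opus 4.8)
This lemma is quoted from \cite{Zill5}; the plan is to recall how the adapted multipliers $A_j(t)$ are built and why the two asserted estimates follow. Since $U$ is bilipschitz on $I_j$, the weight $|U'|$ is comparable to the constant $M_j:=\|U'\|_{L^\infty(I_j)}$, with ratio $R_j:=\max_{I_j}|U'|/\min_{I_j}|U'|$ bounded uniformly in $j$ by \ref{H3}. Using this, together with the change of variables $y\mapsto U(y)$, which conjugates $E_t$ into an operator whose leading part agrees, up to the factor $R_j$, with the constant-coefficient model $-k^2+(\de_y-iktM_j)^2$ plus a subprincipal term dominated by $|U''|\le\eps_0|U'|$ (by \ref{H2}), the scattered analogue of Lemma \ref{lem:weighted_elliptic} on $I_j$ (whose proof uses only $\de_{yy}|U'|=\sign(U')U'''$ and \ref{H1}) gives, in combination with $-E_t\Psi_j=\chi_jF$, the bound $\gamma_0\int_{I_j}|U'||\nabla_k\psi_j|^2\le\Re\langle|U'|\Psi_j,\chi_jF\rangle$. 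Expanding the right-hand side in the Dirichlet eigenbasis $\{\varphi_n\}$ of $-\de_{yy}$ on $I_j$ (eigenvalues $\mu_n$, $\zeta_n:=\sqrt{\mu_n}$) and tracking the modulation $\e^{iktU}$ produced by the scattering, whose frequency on $I_j$ is $\approx ktM_j$, one finds that $\int_{I_j}|U'||\nabla_k\psi_j|^2$ is comparable, with constants depending only on $R_j$, to the Fourier-multiplier quantity
\[
\mathcal D_j(t)[\omega]:=M_j\sum_n\frac{\big|\langle\chi_jF(t),\varphi_n\rangle\big|^2}{k^2+(\zeta_n-ktM_j)^2},
\]
while the $\psi_{j,A}$ term is of the same form, with an extra factor $m_j(t,n)^2$ and an extra factor $k^2$.

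The multiplier is then constructed purely on the symbol side. Let $A_j(t)$ be, in the scattering variables, the multiplier with symbol $m_j(t,n)$ in the basis $\{\varphi_n\}$; as in Proposition \ref{prop:reduction}, and using that the scattering operator is unitary and commutes with $\chi_j$, one has $\langle\chi_j\omega,A_j(t)\chi_j\omega\rangle=\sum_n m_j(t,n)|\langle\chi_jF(t),\varphi_n\rangle|^2$ and $-\Re\langle\chi_j\omega,\dot A_j(t)\chi_j\omega\rangle=\sum_n\big(-\de_t m_j(t,n)\big)|\langle\chi_jF(t),\varphi_n\rangle|^2$. In view of the previous paragraph, it suffices to choose $m_j$ with $1\le m_j(t,n)\le C$, $C$ independent of $n,j,k$ (this gives both sides of the coercivity bound and controls the $\psi_{j,A}$ term), and $-\de_t m_j(t,n)\gtrsim\dfrac{M_jk^2}{k^2+(\zeta_n-ktM_j)^2}$ (so that $-\Re\langle\chi_j\omega,\dot A_j\chi_j\omega\rangle$ dominates $\mathcal D_j$, with $k\ge1$ absorbing the plain $\psi_j$ term). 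The explicit choice
\[
m_j(t,n):=1+c\int_t^\infty\frac{M_jk^2}{k^2+(\zeta_n-sM_jk)^2}\,\dd s
\]
has exactly the required derivative, is nonincreasing in $t$, and, via the substitution $v=(\zeta_n-sM_jk)/k$, satisfies $\int_0^\infty\frac{M_jk^2}{k^2+(\zeta_n-sM_jk)^2}\,\dd s=\int\frac{\dd v}{1+v^2}\le\pi$, a bound \emph{independent} of $M_j$, $\zeta_n$, $k$ and $j$. Hence $1\le m_j(t,n)\le1+c\pi$, so $A_j(t)$ is symmetric positive, of class $C^1$ in $t$, and all the claimed estimates hold with constants depending on $U|_{I_j}$ only through $R_j$.

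Summing the $j$-wise inequality over $\J$ and using $\sup_j R_j<\infty$ from \ref{H3} then yields \eqref{eq:derbound}. The main obstacle is not this symbol computation, which is the clean space--time resonance estimate above, but the reduction of the first paragraph: one must verify that, uniformly in $j$ and $t$, the variable-coefficient operator $E_t$ on $I_j$ is comparable to its constant-coefficient model in the relevant weighted Dirichlet quadratic forms, and that the subprincipal contributions of $U''$ and of the derivatives of $\chi_j$ (controlled by \ref{H2} and \eqref{eq:part1}) can be absorbed. This is precisely the bilipschitz analysis of \cite{Zill5}, and it is there that the uniform control of $\frac{\max_{I_j}|U'|}{\min_{I_j}|U'|}$ is indispensable.
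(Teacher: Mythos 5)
Your proposal is correct and follows essentially the same route as the paper: reduce via the scattering formulation, exploit the bilipschitz structure of $U$ on $I_j$ by a change of variables to $z=U(y)$, and then build a bounded, monotone basis multiplier whose time-derivative reproduces the resonant weight $M_jk^2/(k^2+(\zeta_n-ktM_jk)^2)$; the uniform-in-$j$ bound comes from the $\arctan$-type integral and condition~\ref{H3}. The paper packages the same idea slightly differently: it first rescales time by $\min_{I_j}|U'|$ so that $U_\star=U/\min|U'|$ has Lipschitz constants between $1$ and $\max|U'|/\min|U'|$, then changes variables $z=U_\star(y)$ and invokes the bilipschitz analysis of \cite{Zill5} directly, whereas you keep the factor $M_j$ explicit and check by hand that the symbol integral is uniformly bounded; these are cosmetic variants of the same normalization. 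One small point worth being careful about in a write-up: after the change of variables, the Dirichlet basis expansion and the modulation $\e^{iktU}$ should both be expressed in the $z$-variable (where the modulation has exact frequency $kt$, up to the chosen normalization); if the basis is kept in the $y$-variable, as your phrasing momentarily suggests, the modulation is not a pure frequency shift and the resulting off-diagonal terms need to be absorbed using the bilipschitz bound, which is what \cite{Zill5} does in detail. You correctly flag this reduction as the nontrivial content of the lemma.
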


\begin{proof}
We introduce the new time variable $\tau=t \min(|U'|)$ and observe that \eqref{eq:Eulerkscat} is then given by
\begin{align}
\begin{cases}
\displaystyle\de_{\tau}F  =ik\frac{B(y)}{\min (|U'|)}\Psi,   \quad &\mbox{in } I, \\
\displaystyle \left(-k^2+\left(\de_y-ikt\frac{U'(y)}{\min (|U'|)}\right)^2\right)\Psi  = F, &\mbox{in } I,\\
\Psi= 0, &\mbox{on } \de I.
\end{cases}
\end{align}
We may thus interpret this as a new equation of type \eqref{eq:Eulerkscat} with
  $U,B$ replaced by
 \begin{align}
 U_{\star}:=\frac{U}{\min(|U'|)},\qquad  B_{\star}:=\frac{B}{\min(|U'|)},
  \end{align}
  respectively.
  The flow $U_{\star}$ is then bilipschitz with constants $1$ and
  $\frac{\max(|U'|)}{\min(|U'|)}<C$ by condition \ref{H3}.
  We may hence employ a change of variables $z=U_{\star}(y)$ and define a
  Fourier multiplier with respect to $z$ as in \cite{Zill5}, where the case of a
  bilipschitz profile was analyzed in detail.
\end{proof}

With this preparation, we can now prove our first main result.

\begin{proof}[Proof of Theorem \ref{thm:L2}]
  In view of \ref{H3},  $U$ is Bilipschitz on each interval $I_j$.
  We thus introduce $A_j(t)$ as given in Lemma \ref{lem:Bilipschitz} and define
  \begin{align}
    A(t)= \sum_j \chi_j A_j(t) \chi_j.
  \end{align}
  Then by Proposition \ref{prop:reduction} it holds that
  \begin{align}
    \ddt \l \omega, A(t) \omega\r \leq \Re\l \omega, \dot A (t) \omega \r +C\eps_0\sum_{j\in \J} \int_{I_j} |U'(y)| \left[|\nabla_k  \psi_j(y)|^2 +k^2|\nabla_k \psi_{j,A}(y) |^2\right]\dd y.
  \end{align}
Exploiting \eqref{eq:derbound}, it follows that
  \begin{align}
     \ddt \l \omega, A(t) \omega \r \leq (C\eps_0-1)\sum_{j\in \J}\int_{I_j} |U'(y)| \left[|\nabla_k  \psi_j(y)|^2 +k^2|\nabla_k \psi_{j,A}(y) |^2\right]\dd y. \end{align}
Therefore, given \eqref{eq:mildy2} and \ref{P}, we infer that
  \begin{align}
     \ddt \l \omega, A(t) \omega \r + \frac12\sum_{j\in \J}\int_{I_j} |U'(y)| \left[|\nabla_k  \psi_j(y)|^2 +k^2|\nabla_k \psi_{j,A}(y) |^2\right]\dd y\leq0.
     \end{align}
From this, we deduce that $t\mapsto \l \omega(t), A(t) \omega(t) \r$ is non-increasing.
Furthermore, since $0 \leq \l \omega(t), A(t)  \omega(t) \r$ for all $t\geq 0$,
this also implies that
  \begin{align}
    \int_{0}^T\left[\sum_{j\in \J}\int_{I_j} |U'(y)| \left[|\nabla_k  \psi_j(y)|^2 +k^2|\nabla_k \psi_{j,A}(y) |^2\right]\dd y \right]\dd t \leq \l \omega(0), A(0) \omega(0) \r
  \end{align}
  for all $T\geq 0$. As the left-hand side is a bounded increasing function, we can take the limit as $T\to\infty$ and use
  Lemma \ref{lem:local_elliptic} to deduce that 
  \begin{align}
    \int_{0}^\infty\int_{I} |U'(y)| |\nabla_k  \psi(t,y)|^2 \dd y \dd t \leq \l \omega(0), A(0) \omega(0) \r,
  \end{align}  
  and conclude that proof.
\end{proof}

\section{Boundary layers and \texorpdfstring{$H^1$}{H1} stability}\label{sec:higher}
Building on the results of the previous section, we extend our result to also
establish $H^{1}$ stability and prove the intermediate part of Theorem \ref{thm:L2}.
Due to the potentially degenerate behavior of (derivatives of) the coefficient
functions $U$ and $B$, here we rely on the scattering formulation
\eqref{eq:Eulerkscat} and the differential operator $\dg$ in \eqref{eq:newdiff}
to derive sufficient control of commutator terms.
Here, principal challenges arise from several sources:
\begin{itemize}
\item Taking derivatives of the evolution equation \eqref{eq:Eulerkscat}, we
  obtain commutator terms involving derivatives of the coefficient functions. As
  these coefficient functions are mildly degenerate, we need to establish
  localized estimates controlling derivatives of these functions.
\item While the stream function satisfies zero Dirichlet conditions, this does
  not hold for its derivative. Hence, a boundary layer forms that develops a
  logarithmic singularity near the boundary as time tends to infinity, which was
  studied in \cite{Zill6} and \cite{Zhang2015inviscid}.
  Here, additionally the degeneracy of the coefficients and the need for localized
  estimates (with associated boundary conditions) pose strong technical
  challenges.
\item Furthermore, even in the setting without boundary, due to the degeneracy
  of the coefficient functions, the derivative of the Biot-Savart law introduces
  several non-small commutators.
  We hence adapt these into a modified elliptic operator (c.f. Lemma
  \ref{lem:H1equation}), for which in turn estimates have to be developed.
  This additional modification further necessitates a longer argument in Section
  \ref{sec:H2_2}, where need to take these changes into account in a recursive argument.
\end{itemize}

\subsection{Auxiliary functions and related equations}
\label{sec:auxil-funct}
We begin by studying the equations for $\dg F$. For this, we define the bounded coefficient functions (related to $\dc$ in 
\eqref{eq:newdiff})
\begin{align}
\dc^{(1)}= \sum_{j} \frac{\max_{I_{j}}|U'|}{U'(y)}\chi_{j}',\qquad \dc^{(2)}= \sum_{j} \frac{\max_{I_{j}}|U'|}{U'(y)}\chi_{j}''.
\end{align}
By our choice of $I_{j}$ in terms of level sets of $U'$, $\dc^{(1)}, \dc^{(2)}$
can be estimates in terms of $\epsilon_{0}$ and $\|\chi_{j}'\|_{L^{\infty}}$ and
$\|\chi_{j}''\|_{L^{\infty}}$, respectively.
Taking into account the scaling of this norms in terms of the interval size
$|I_{j}|$ it follows by conditions \ref{P} that
\begin{align*}
  |\dc^{(1)}|+ |\dc^{(2)}| \leq 2 C \epsilon_{0} (1+1/\kappa)^{2} \leq \frac{\gamma_{0}k^{2}}{2},
\end{align*}
which we use in our elliptic estimates to control these contributions as error terms.

In the case $I$ is not the whole space but has boundary points $\{a,b\}=\p I$
(or just a single boundary point),
we further introduce the auxiliary homogenous solutions  $h_{\a}, h_{\b}, \tilde{h}_{\a}, \tilde{h}_{\b}$ of the problems
\begin{align}
&\left(-k^{2}+\de_{yy} -2\de_{y} \frac{\dc^{(1)}}{\dc} -\frac{\dc^{(2)}}{\dc} \right)\tilde{h}_\bullet=0,\label{eq:homo1} \\
&\left(-k^{2}+\de_{yy} \right)h_\bullet=0,\label{eq:homo2}
\end{align}
where $\bullet=\a,\b$, with boundary conditions
\begin{align}
h_{\a}(\a)=\tilde{h}_{\a}(\a)=h_{\b}(\b)=\tilde{h}_{\b}(\b)&=1, \label{eq:boundh1}\\
h_{\a}(\b)=\tilde{h}_{\a}(\b)=h_{b}(\a)=\tilde{h}_{\b}(\a)&=0.\label{eq:boundh2}
\end{align} 
In particular,
\begin{align}
&\left(-k^{2}+(\de_{y}-iktU')^{2} -2(\de_{y}-iktU') \frac{\dc^{(1)}}{\dc} -\frac{\dc^{(2)}}{\dc} \right)\e^{iktU}\tilde{h}_\bullet=0,\label{eq:homo3} \\
&\left(-k^{2}+(\de_{y}-iktU')^{2} \right)\e^{iktU}h_\bullet=0.\label{eq:homo4}
 \end{align}
We then have the following result.
\begin{lemma}\label{lem:H1equation}
Let $F, \Psi$ be the solution to \eqref{eq:Eulerkscat}.
Then $\dg F$ satisfies the following equations, where the terms past the first
line only appear
in the setting with boundary:
\begin{align}\label{eq:DyF}
\de_{t} \dg F &=  ik B \tPhi + (\dg B)ik \Psi\notag \\
&\quad+ B \dc(\a)\l  \dg F,\frac{ \e^{ikt(U(y)-U(\a))}}{tU'd} h_\a  \r  \e^{{ikt(U(y)-U(\a))}}\tilde{h}_{\a}\notag \\
&\quad-  B\dc(\b)\l  \dg F,\frac{ \e^{ikt(U(y)-U(\b))}}{tU'd} h_\b  \r    \e^{{ikt(U(y)-U(\b))}}\tilde{h}_{\b}\notag\\
&\quad+  B \dc(\a)\l  \de_y\left(\frac{h_\a}{tU'}\right)F,\e^{ikt(U(y)-U(\a))}  \r  \e^{{ikt(U(y)-U(\a))}}\tilde{h}_{\a}\notag \\
&\quad-  B\dc(\b)\l  \de_y\left(\frac{h_\b}{tU'}\right)F,\e^{ikt(U(y)-U(\b))}  \r      \e^{{ikt(U(y)-U(\b))}}\tilde{h}_{\b}\notag\\
&\quad+   \frac{\dc\omega^{in}}{tU'}\bigg|_{y=\a} B\e^{{ikt(U(y)-U(\a))}}\tilde{h}_{\a}+  \frac{\dc\omega^{in}}{tU'}\bigg|_{y=\b}B   \e^{{ikt(U(y)-U(\b))}}\tilde{h}_{\b}
\end{align}
where  
\begin{align}
&\left(-k^{2}+(\de_{y}-iktU')^{2}-2 (\de_{y}-iktU') \frac{\dc^{(1)}}{\dc}-\frac{\dc^{(2)}}{d}\right) \tPhi \notag\\
&\qquad\qquad= \dg F +\left[ \left(\frac{U''}{U'}\dc\right)'- \frac{U''}{U'}\dc^{(1)}- 2 (\de_{y}-iktU') \frac{U''}{U'}\dc\right]\left(\de_y-iktU'\right)\Psi,
\end{align}   
with boundary conditions $\tPhi |_{y=a,b} =0$.
\end{lemma}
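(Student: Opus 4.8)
The plan is to derive the evolution equation for $\dg F$ directly from the scattering formulation \eqref{eq:Eulerkscat} by applying $\dg = \dc(y)\de_y$ to both sides and carefully tracking all commutators. First I would differentiate $\de_t F = ikB\Psi$ to get $\de_t \dg F = \dg(ikB\Psi) = ikB\,\dg\Psi + ik(\dg B)\Psi$, so the task reduces to understanding $\dg\Psi$ and deriving the modified elliptic equation it satisfies. Applying $\dg$ to $E_t\Psi = F$, i.e. $(-k^2+(\de_y-iktU')^2)\Psi = F$, the main point is that $\dc(y)$ was chosen precisely so that $[\dg,\de_y-iktU']$ has no $t$-growing part: since $\dg = \dc\de_y$ and $\dc = \sum_j \frac{\|U'\|_{L^\infty(I_j)}}{U'}\chi_j$, one computes $[\dc\de_y, \de_y - iktU'] = -\dc'\de_y - ikt\,\dc\,U'' \cdot(\text{bounded})$, and because $\dc U' = \sum_j \|U'\|_{L^\infty(I_j)}\chi_j$ has derivative $\sum_j\|U'\|_{L^\infty(I_j)}\chi_j' = \dc^{(1)}U'$ (with $\dc^{(1)}$ bounded by \ref{P}), the dangerous $ikt$ terms reorganize into the $-2(\de_y-iktU')\frac{\dc^{(1)}}{\dc}$ and $-\frac{\dc^{(2)}}{\dc}$ corrections appearing in the modified operator. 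This is what forces the operator $-k^2+(\de_y-iktU')^2 - 2(\de_y-iktU')\frac{\dc^{(1)}}{\dc} - \frac{\dc^{(2)}}{\dc}$ rather than $E_t$ itself, and produces the source term $\dg F + [(\tfrac{U''}{U'}\dc)' - \tfrac{U''}{U'}\dc^{(1)} - 2(\de_y-iktU')\tfrac{U''}{U'}\dc](\de_y-iktU')\Psi$ once one re-expresses everything in terms of $\tPhi$.

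The main obstacle is the boundary correction. Applying $\dg$ does not preserve the Dirichlet condition: although $\Psi|_{\p I}=0$, the function $\dg\Psi$ does not vanish at $\p I$, and $\dg F$ need not vanish either. The idea is to write $\dg F = (\text{function vanishing at }\p I) + (\text{boundary-data carrying part})$, where the latter is a combination of the homogeneous solutions $\tilde h_\a, \tilde h_\b$ of \eqref{eq:homo1} with coefficients equal to the boundary values of $\dg F$. To identify those boundary values in terms of data, I would use Remark \ref{remark:boundary_data}: $F(t)|_{\p I} = \omega^{in}|_{\p I}$ for all $t$, together with the relation expressing $\dg F|_{\p I}$ via a $t^{-1}$-weighted pairing obtained by integrating the evolution equation against a suitable homogeneous solution $h_\bullet$ (the factor $\frac{1}{tU'}$ arising from integrating by parts the oscillatory factor $\de_y\e^{iktU} = iktU'\e^{iktU}$, i.e. solving for the boundary trace by an integration-by-parts identity against $e^{ikt(U-U(\bullet))}h_\bullet$). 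This is where the pairings $\l \dg F, \frac{\e^{ikt(U-U(\a))}}{tU'd}h_\a\r$, $\l \de_y(\frac{h_\a}{tU'})F, \e^{ikt(U-U(\a))}\r$ and the explicit term $\frac{\dc\omega^{in}}{tU'}|_{y=\a}$ all come from, multiplied by $B\dc(\a)$ reflecting that the boundary contributions only enter through $ikB\dg\Psi$ after the split $\Psi = \Psi_{\mathrm{hom}} + \Psi_0$.

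Concretely I would proceed in steps: (1) commute $\dg$ past $\de_t$ to get the $ikB\,\dg\Psi + ik(\dg B)\Psi$ structure; (2) commute $\dg$ past $E_t$, collecting the bounded commutator coefficients into $\dc^{(1)}/\dc$, $\dc^{(2)}/\dc$ and the $U''/U'$-weighted terms, and relabel $\dc\,\de_y\Psi$-type quantities to arrive at the stated modified elliptic equation for $\tPhi = \dg\Psi$ (plus corrections); (3) handle the non-homogeneous boundary data of $\tPhi$ by subtracting off $\dg F|_{\p I}\cdot\tilde h_\bullet$, verifying using \eqref{eq:homo1}--\eqref{eq:homo3} that the subtracted piece solves the homogeneous problem and restores the Dirichlet condition $\tPhi|_{y=a,b}=0$; (4) express $\dg F|_{\p I}$ through the integration-by-parts identity against $\e^{ikt(U-U(\bullet))}h_\bullet$ using $(-k^2+\de_{yy})h_\bullet=0$ and the boundary normalization \eqref{eq:boundh1}--\eqref{eq:boundh2}, producing the $\frac{1}{tU'}$ factors and, via $F|_{\p I}=\omega^{in}|_{\p I}$, the explicit $\frac{\dc\omega^{in}}{tU'}|_{y=\bullet}$ terms; (5) substitute back into the $\de_t\dg F$ equation, distributing the factor $B$ and the signs $\pm$ according to whether the contribution comes from $\a$ or $\b$, to land exactly on \eqref{eq:DyF}. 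The bookkeeping of signs and of which homogeneous solution ($h$ versus $\tilde h$) appears in each slot is the most error-prone part, but it is purely computational once the commutator structure in step (2) and the boundary-trace identity in step (4) are in place.
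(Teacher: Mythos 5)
Your overall strategy is the paper's: apply $\dg$ to the scattering system, compute the commutator $[(\de_y-iktU')^2,\dg]$ via $[A^2,C]=2A[A,C]+[[A,C],A]$ and the key identity $[\de_y-iktU',\dg]=\dc^{(1)}\de_y-\frac{U''}{U'}\dc(\de_y-iktU')$ to obtain the modified elliptic operator, then peel off the boundary contribution and identify the boundary traces by pairing with the oscillatory homogeneous solutions $\e^{ikt(U-U(\bullet))}h_\bullet$, finally using $F(t)|_{\p I}=\omega^{in}|_{\p I}$ from Remark~\ref{remark:boundary_data}. The commutator bookkeeping, the role of $\dc^{(1)},\dc^{(2)}$, and the source of the $\frac{1}{tU'}$ factors via $\e^{ikt(U-U(\bullet))}=\frac{1}{iktU'}\de_y\e^{ikt(U-U(\bullet))}$ are all correctly identified.

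However, there is a consistent conceptual slip that would derail the execution: you repeatedly speak of decomposing $\dg F$ into a boundary-vanishing part plus a part carried by $\tilde h_\bullet$, with coefficients given by ``the boundary values of $\dg F$,'' and then of ``expressing $\dg F|_{\p I}$'' via the pairing identity. The object that must be decomposed is $\dg\Psi$, not $\dg F$. The split is
\begin{align*}
\dg\Psi=\tPhi+(\dg\Psi|_{y=\a})\e^{ikt(U-U(\a))}\tilde h_\a+(\dg\Psi|_{y=\b})\e^{ikt(U-U(\b))}\tilde h_\b,
\end{align*}
with $\tPhi|_{\p I}=0$; the boundary traces that need identification are $\dg\Psi|_{y=\a,\b}$. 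These are first found, by testing $E_t\Psi=F$ against $\e^{iktU}h_\bullet$ (and using $\Psi|_{\p I}=0$), to equal $\mp\dc(\bullet)\l F,\e^{ikt(U-U(\bullet))}h_\bullet\r$, and only then does a second integration by parts convert $\l F,\cdot\r$ into $\l\dg F,\cdot\r$, $\l\de_y(h_\bullet/tU')F,\cdot\r$, and the explicit $\frac{\dc\omega^{in}}{tU'}\big|_{y=\bullet}$ boundary term. The pairings involving $\dg F$ in \eqref{eq:DyF} are integrals, not boundary traces, and they arise precisely because the boundary trace of $\dg\Psi$, when substituted into $\de_t\dg F=ikB\dg\Psi+\dots$, multiplies the factor $B\tilde h_\bullet\e^{ikt(U-U(\bullet))}$. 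If you try to split $\dg F$ instead, the $\tilde h_\bullet$ corrections would not acquire the prefactor $B$ and the elliptic equation for $\tPhi$ in terms of $\dg F$ would not emerge; the structure of \eqref{eq:DyF} forces the decomposition to live on the streamfunction side. Once you replace $\dg F$ by $\dg\Psi$ in those places, your steps (1)--(5) reproduce the paper's argument.
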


\begin{proof}
As a first step, we apply $\dg$ to \eqref{eq:Eulerkscat} and obtain
\begin{align}
&\de_t \dg F = ikB \dg \Psi + ik (\dg B)\Psi,\label{eq:eqforDF} \\
&\left(-k^{2}+(\de_{y}-iktU')^{2}\right)\dg \Psi = \dg F + [(\de_{y}-iktU')^{2}, \dg] \Psi, \label{eq:eqforDPsi}
\end{align}
Using the commutator relation $[A^2,B]=2A[A,B]+[[A,B],A]$, and that
\begin{align}\label{eq:comm0}
[\de_y-iktU',\dg]=\dc'\de_y+iktU''\dc=\dc^{(1)}\de_y- \frac{U''}{U'}\dc\left(\de_y-iktU'\right)
\end{align}
we compute
\begin{align}\label{eq:comm1}
[(\de_{y}-iktU')^{2}, \dg] \Psi &= 2 (\de_{y}-iktU')[\de_{y}-iktU', \dg] \Psi + [\de_{y}-iktU',[\de_{y}-iktU', \dg]] \Psi \notag\\
    &=- 2 (\de_{y}-iktU') \frac{U''}{U'}\dc\left(\de_y-iktU'\right)\Psi+ 2 (\de_{y}-iktU') \dc^{(1)} \de_{y} \Psi \notag\\
    &\quad+ \left[(\de_{y}-iktU'), \frac{U''}{U'}\dc\left(\de_y-iktU'\right)\right]\Psi + [(\de_{y}-iktU'),\dc^{(1)} \de_{y}]\Psi. 
\end{align}
The first term is already in the desired form. Also, we simply rewrite the second term as 
\begin{align}\label{eq:comm2}
2 (\de_{y}-iktU') \dc^{(1)} \de_{y} \Psi =2 (\de_{y}-iktU') \frac{\dc^{(1)}}{\dc} \dg \Psi
\end{align}
For the third term, we note that the operator $\de_{y}-iktU'$ commutes
with itself and we hence obtain
\begin{align}\label{eq:comm3}
\left[(\de_{y}-iktU'), \frac{U''}{U'}\dc\left(\de_y-iktU'\right)\right]\Psi 
&=\left[(\de_{y}-iktU'), \frac{U''}{U'}\dc\right]\left(\de_y-iktU'\right)\Psi\notag\\
&= \left(\frac{U''}{U'}\dc\right)'\left(\de_y-iktU'\right)\Psi,
\end{align}
while for the fourth term we directly compute that
\begin{align}\label{eq:comm4}
[(\de_{y}-iktU'),\dc^{(1)} \de_{y} \Psi]
&=\dc^{(2)}\de_y\Psi- \frac{U''}{U'}\dc^{(1)}\left(\de_y-iktU'\right)\Psi\notag\\
&=\frac{\dc^{(2)}}{d}\dg\Psi- \frac{U''}{U'}\dc^{(1)}\left(\de_y-iktU'\right)\Psi.
\end{align}
Collecting all of the above, we obtain 
\begin{align}
&\left(-k^{2}+(\de_{y}-iktU')^{2}-2 (\de_{y}-iktU') \frac{\dc^{(1)}}{\dc}-\frac{\dc^{(2)}}{d}\right)\dg \Psi \notag\\
&\qquad\qquad= \dg F +\left[ \left(\frac{U''}{U'}\dc\right)'- \frac{U''}{U'}\dc^{(1)}- 2 (\de_{y}-iktU') \frac{U''}{U'}\dc\right]\left(\de_y-iktU'\right)\Psi
\end{align}
We then split $\dg \Psi$ into a non-homogenous solution $\tPhi$ with homogeneous Dirichlet boundary conditions and
the linear combination of homogeneous solutions to \eqref{eq:homo3}, namely
\begin{align}\label{eq:homoline}
\dg \Psi=\tPhi+ (\dg\Psi|_{y=\a}) \e^{{ikt(U(y)-U(\a))}}\tilde{h}_{\a} +(\dg\Psi|_{y=b})    \e^{{ikt(U(y)-U(\b))}}\tilde{h}_{\b},
\end{align}
where $\tilde{h}_{\a}$ and $\tilde{h}_{\b}$ are defined in \eqref{eq:homo1}. 
Furthermore, using integration by parts and recalling from \eqref{eq:Eulerkscat} that $\Psi$ vanishes at the boundary, 
we find that for any homogeneous solution $h_\bullet$ we have
\begin{align}
\l F, \e^{iktU}h_\bullet  \r &=   \l (-k^{2}+(\de_{y}-iktU')^{2}) \Psi, \e^{iktU}h_\bullet  \r\notag\\
    &=  \e^{iktU}h_\bullet  (\de_{y}-iktU')\Psi \big|_{y=\a}^{\b} -\e^{iktU} \Psi  (\de_{y}-iktU')  h_\bullet\big|_{y=\a}^{\b}\notag\\
    &=  \e^{iktU}h_\bullet \de_{y}\Psi \big|_{y=\a}^{\b}=\e^{iktU}h_\bullet \frac{1}{d}\dg\Psi \big|_{y=\a}^{\b}.
  \end{align}
Therefore, from \eqref{eq:boundh1}-\eqref{eq:boundh2} we conclude that
\begin{align}\label{eq:boundaryterms}
\dg\Psi|_{y=\a} =-\dc(\a)\l F, \e^{ikt(U(y)-U(\a))}h_\a  \r   , \qquad \dg\Psi|_{y=\b} =\dc(\b)\l F, \e^{ikt(U(y)-U(\b))}h_\b  \r,
\end{align}
Thus, integrating by parts we obtain
\begin{align}
\dg\Psi|_{y=\a} 
&=-\dc(\a)\l  \frac{h_\a}{iktU'}F,\de_y\e^{ikt(U(y)-U(\a))}  \r \notag \\
&=\dc(\a)\l  \de_y\left(\frac{h_\a}{iktU'}\right)F,\e^{ikt(U(y)-U(\a))}  \r  +\dc(\a)\l  \frac{h_\a}{iktU'}\de_yF,\e^{ikt(U(y)-U(\a))}  \r +\frac{\dc(\a)F(\a)}{iktU'(\a)} \notag\\
&=\dc(\a)\l  \de_y\left(\frac{h_\a}{iktU'}\right)F,\e^{ikt(U(y)-U(\a))}  \r  +\dc(\a)\l  \dg F,\frac{ \e^{ikt(U(y)-U(\a))}}{iktU'd} h_\a  \r +\frac{\dc(\a)F(\a)}{iktU'(\a)}.
\end{align}
In a similar fashion,
\begin{align}
\dg\Psi|_{y=\b} 
=-\dc(\b)\l  \de_y\left(\frac{h_\b}{iktU'}\right)F,\e^{ikt(U(y)-U(\b))}  \r  -\dc(\b)\l  \dg F,\frac{ \e^{ikt(U(y)-U(\b))}}{iktU'd} h_\b  \r +\frac{\dc(\b)F(\b)}{iktU'(\b)}.
\end{align}
Going back to \eqref{eq:eqforDF}, we use  \eqref{eq:homoline} and the above computations to obtain precisely \eqref{eq:DyF}, upon
using that the boundary of $F(t)$ is preserved (c.f. Remark \ref{remark:boundary_data}),
and thus finishing the proof. 
\end{proof}

Having established the modified equation, we show that we can localize estimates
as in Section \ref{sec:simple_case}. Here, as in \cite{Zill6} we further split
$\dg F$ into contributions $\beta_{\a}, \beta_{\b}$ with zero initial data and right-hand-side involving
$\omega^{in}(\a),\omega^{in}(\b)$ and another contribution $F^{(1)}$ with initial data $\dg F|_{t=0}$
but simpler right-hand-side.
This splitting allows us to separately treat the different time behavior and
growth of boundary terms and does not appear in the setting without boundary.

We state the splitting in the following lemma, whose proof is only based on
linearity of the system considered. 
\begin{lemma}
  \label{lem:splitting}
  In the setting and notation of Lemma \ref{lem:H1equation}, it holds that
  we write $\dg F=\beta_{\a}+ \beta_{\b}+F^{(1)}$,
  where $F^{(1)}$ is the unique solution of
  \begin{align}\label{eq:DyF1}
\de_{t} F^{(1)} &=  ik B \tPsi + (\dg B)ik \Psi \notag\\
&\quad+ B \dc(\a)\l  F^{(1)},\frac{ \e^{ikt(U(y)-U(\a))}}{tU'd} h_\a  \r  \e^{{ikt(U(y)-U(\a))}}\tilde{h}_{\a} \notag\\
&\quad-  B\dc(\b)\l  F^{(1)},\frac{ \e^{ikt(U(y)-U(\b))}}{tU'd} h_\b  \r    \e^{{ikt(U(y)-U(\b))}}\tilde{h}_{\b}\notag\\
&\quad+  B \dc(\a)\l  \de_y\left(\frac{h_\a}{tU'}\right)F,\e^{ikt(U(y)-U(\a))}  \r  \e^{{ikt(U(y)-U(\a))}}\tilde{h}_{\a}\notag \\
&\quad-  B\dc(\b)\l  \de_y\left(\frac{h_\b}{tU'}\right)F,\e^{ikt(U(y)-U(\b))}  \r      \e^{{ikt(U(y)-U(\b))}}\tilde{h}_{\b},
\end{align}
with initial condition $F^{(1)}|_{t=0}=\dg F|_{t=0}$, together with 
\begin{align}\label{eq:DyF2}
&\left(-k^{2}+(\de_{y}-iktU')^{2}-2 (\de_{y}-iktU') \frac{\dc^{(1)}}{\dc}-\frac{\dc^{(2)}}{d}\right) \tPsi \notag\\
&\qquad\qquad= F^{(1)} +\left[\de_{y} \left(\frac{U''}{U'}\dc\right)- \frac{U''}{U'}\dc^{(1)}- 2 (\de_{y}-iktU') \frac{U''}{U'}\dc\right]\left(\de_y-iktU'\right)\Psi,
\end{align}   
with boundary conditions $\tPsi|_{y=\a,\b}=0$.
Similarly, $\beta_\bullet$ is the unique solution of
\begin{align}\label{eq:beta}
\de_{t}  \beta_\bullet &= ik B \tPsi_\bullet+B \dc(\a)\l  \beta_\bullet,\frac{ \e^{ikt(U(y)-U(\a))}}{tU'd} h_\a  \r  \e^{{ikt(U(y)-U(\a))}}\tilde{h}_{\a}\notag \\
&\quad-  B\dc(\b)\l  \beta_\bullet,\frac{ \e^{ikt(U(y)-U(\b))}}{tU'd} h_\b  \r    \e^{{ikt(U(y)-U(\b))}}\tilde{h}_{\b}
+    \frac{\dc\omega^{in}}{tU'}\bigg|_{y=\bullet} B\e^{{ikt(U(y)-U(\bullet))}}\tilde{h}_{\bullet},
\end{align}
for $\bullet=a,b$, with initial condition $\beta_\bullet|_{t=0}=0$,  together with  
\begin{align}\label{eq:betapsi}
&\left(-k^{2}+(\de_{y}-iktU')^{2}-2 (\de_{y}-iktU') \frac{\dc^{(1)}}{\dc}-\frac{\dc^{(2)}}{d}\right) \tPsi_\bullet= \beta_\bullet,
\end{align}   
and boundary conditions $\tPsi_\bullet|_{y=\a,\b}=0$.
  
\end{lemma}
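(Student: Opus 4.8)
The plan is to prove the decomposition purely by superposition, using that both the evolution equation \eqref{eq:DyF} for $\dg F$ and the coupled elliptic problem for $\tPhi$ from Lemma \ref{lem:H1equation} are linear in the pair $(\dg F,\tPhi)$. First I would record that the three auxiliary systems \eqref{eq:DyF1}--\eqref{eq:betapsi} are well-posed: the modified elliptic operator appearing in \eqref{eq:DyF2} and \eqref{eq:betapsi}, with homogeneous Dirichlet data, is boundedly invertible on $L^2(I)$ -- this is where \ref{P} enters, since by the bound $|\dc^{(1)}|+|\dc^{(2)}|\le\tfrac{\gamma_0k^2}{2}$ of Section \ref{sec:auxil-funct} its lower-order terms are genuine perturbations of $-k^2+(\de_y-iktU')^2$. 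Eliminating $\tPsi$ and $\tPsi_\bullet$ via this solve, each of \eqref{eq:DyF1} and \eqref{eq:beta} becomes a linear ODE $\de_t G=\mathcal L(t)G+g(t)$ on $L^2(I)$ with bounded generator, hence uniquely solvable by Duhamel; near $t=0$ the coefficients $\propto(tU')^{-1}$ are handled exactly as in Lemma \ref{lem:H1equation}, by reverting to the un-integrated form \eqref{eq:boundaryterms} of the boundary terms, so that all coefficients stay bounded. This gives the existence and uniqueness claimed in the statement.

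Next I would set $G:=\beta_\a+\beta_\b+F^{(1)}$ and $\Phi_G:=\tPsi+\tPsi_\a+\tPsi_\b$ and verify that $(G,\Phi_G)$ solves the same coupled system as $(\dg F,\tPhi)$. For the elliptic part, linearity of the modified Laplacian together with superposition of the three Dirichlet problems \eqref{eq:DyF2}, \eqref{eq:betapsi}$|_{\bullet=\a}$ and \eqref{eq:betapsi}$|_{\bullet=\b}$ shows that $\Phi_G$ satisfies that operator with right-hand side $F^{(1)}+\beta_\a+\beta_\b+[\,\cdots\,](\de_y-iktU')\Psi=G+[\,\cdots\,](\de_y-iktU')\Psi$ and zero boundary values, i.e.\ precisely the equation for $\tPhi$ from Lemma \ref{lem:H1equation} with $\dg F$ replaced by $G$. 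For the evolution part I would add \eqref{eq:DyF1}, \eqref{eq:beta}$|_{\bullet=\a}$ and \eqref{eq:beta}$|_{\bullet=\b}$; the only point worth stressing is that each $L^2$ pairing is linear in its first argument, so the self-interaction terms combine as $\l\beta_\a,\cdot\r+\l\beta_\b,\cdot\r+\l F^{(1)},\cdot\r=\l G,\cdot\r$, the streamfunction terms combine as $ikB\tPsi+ikB\tPsi_\a+ikB\tPsi_\b=ikB\Phi_G$, while $(\dg B)ik\Psi$ and the two $F$-dependent forcing terms carrying $\de_y(h_\bullet/(tU'))F$ occur once (from \eqref{eq:DyF1}) and the two boundary forcing terms $\tfrac{\dc\omega^{in}}{tU'}\big|_{y=\a}$ and $\tfrac{\dc\omega^{in}}{tU'}\big|_{y=\b}$ occur once each (from the respective $\beta_\bullet$). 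Collecting these reproduces exactly the right-hand side of \eqref{eq:DyF} with $(\dg F,\tPhi)$ replaced by $(G,\Phi_G)$.

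Finally, the initial data agree, $G|_{t=0}=\beta_\a|_{t=0}+\beta_\b|_{t=0}+F^{(1)}|_{t=0}=0+0+\dg F|_{t=0}$, so since the coupled evolution--elliptic system is linear and well-posed, uniqueness forces $G=\dg F$ and $\Phi_G=\tPhi$, which is the claimed splitting $\dg F=\beta_\a+\beta_\b+F^{(1)}$.

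The argument is entirely algebraic once well-posedness is granted, so the principal obstacle is bookkeeping: one must check that every forcing and self-interaction term on the right-hand side of \eqref{eq:DyF} is reproduced exactly once, with the correct coefficient, across the three subsystems, and that the split of $\tPhi$ is consistent with that of $\dg F$ so that the coupling term $ikB\tPhi$ distributes correctly. The only genuine analytic point is the behavior of the $(tU')^{-1}$-coefficients near $t=0$, dealt with as indicated above, which plays no role in the decomposition itself.
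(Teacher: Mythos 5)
Your proof is correct and takes the same route as the paper, which explicitly states that the lemma's proof ``is only based on linearity of the system considered'' and gives no further argument. You have simply spelled out the well-posedness and term-by-term bookkeeping that the paper leaves implicit, including the sensible observation that the apparent $t^{-1}$ singularities in the coefficients disappear when one reverts to the un-integrated form of the boundary pairings.
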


\subsection{Localization and estimates}

Following the notational conventions of Section \ref{sub:localizedpote}, 
we introduce $\tPhi_{j}$ and $\tPhi_{j,A}$ for the solutions of
\begin{equation}\label{eq:local3}
\begin{cases}
(-k^2+(\de_{y}-iktU')^{2})\tPhi_j = \chi_j F^{(1)}, \quad &\mbox{in } I_j , \\
\tPhi_{j}= 0, &\mbox{on } \de I_j, 
\end{cases}
\end{equation}
and
\begin{equation}\label{eq:local4}
\begin{cases}
 (-k^2+(\de_{y}-iktU')^{2}) \tPhi_{j,A} = A_j\chi_j F^{(1)}, \quad &\mbox{in }  I_j , \\
\tPhi_{j,A}= 0, &\mbox{on }  \de I_j,
\end{cases}
\end{equation}
We introduce the short-hand-notation 
\begin{align}
\nabla_{k,t}:=(ik, \de_{y}-iktU')
\end{align}
along with an associated Hilbert-space $H^1_t$ endowed with scalar product
\begin{align}\label{eq:4}
\l g_1,g_2 \r_{H^1_t}= \l g_1,g_2 \r + \l \nabla_{k,t} g_1, \nabla_{k,t} g_2 \r, \qquad \| g \|^2_{H^1_t}= \|g\|^2 + \| \nabla_{k,t} g\|^2.
\end{align}
Similarly, for any weight function $a(y)$, $H^1_t(a)$ refers to the space defined using $L^2(a \dd x \dd y)$ instead.

\begin{lemma}\label{lem:H1elliptic}
In the setting of Lemma \ref{lem:H1equation} additionally suppose that \ref{P}
is satisfied.
Then for any interval $J$ and any given function $g \in L^{2}(J)$, the unique solution $\psi$ of 
\begin{align}
\left(-k^{2}+(\de_{y}-iktU')^{2}-2 (\de_{y}-iktU') \frac{\dc^{(1)}}{\dc}-\frac{\dc^{(2)}}{d}\right)\psi &= g, \label{eq:H1ell1}\\
\psi|_{\de J}&=0,
\end{align}
and the unique solution $\psi^{(1)}$ of
\begin{align}
(-k^2+(\de_{y}-iktU'(y))^{2})\psi^{(1)}&= g,\label{eq:H1ell2} \\
\psi^{(1)}|_{\de J}&=0,
\end{align}
satisfy
\begin{align}
\int_{J} |U'| |\nabla_{k,t}\psi|^{2} \leq C \int_{J} |U'| |\nabla_{k,t}\psi^{(1)}|^{2}. 
\end{align}
\end{lemma}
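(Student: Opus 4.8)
The plan is to treat the modified operator in \eqref{eq:H1ell1}, which I denote $\widetilde E_t := E_t - 2(\de_y - iktU')\tfrac{\dc^{(1)}}{\dc} - \tfrac{\dc^{(2)}}{d}$, as a small perturbation — measured in the $|U'|$-weighted energy — of the twisted operator $E_t = -k^2 + (\de_y-iktU')^2$ appearing in \eqref{eq:H1ell2}, and then to run a weighted energy estimate in the spirit of Lemma~\ref{lem:weighted_elliptic}. Write $D := \de_y - iktU'$ and $N(h)^2 := \int_J |U'|\,|\nabla_{k,t} h|^2\,\dd y = \int_J |U'|\big(k^2|h|^2 + |Dh|^2\big)\,\dd y$, so that the assertion reads $N(\psi)^2 \le C\,N(\psi^{(1)})^2$; all pairings $\l\cdot,\cdot\r$ are taken over $J$, $D$ is anti-symmetric up to boundary terms, and $h|_{\de J}=0$ for $h\in\{\psi,\psi^{(1)}\}$ annihilates every such term. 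The first ingredient is the twisted analogue of Lemma~\ref{lem:weighted_elliptic}: for $h\in H^1_0(J)$, integrating by parts twice and discarding purely imaginary contributions gives
\begin{align*}
\Re\l -E_t h,\, |U'| h\r = \int_J \Big[\big(k^2|U'| - \tfrac12\sign(U')U'''\big)|h|^2 + |U'|\,|Dh|^2\Big]\dd y \ge \gamma_0\, N(h)^2,
\end{align*}
where one uses \eqref{eq:weakder} (the atomic part of $\de_{yy}|U'|$ vanishing by \ref{H2}) and then \ref{H1}, exactly as in the proof of Lemma~\ref{lem:weighted_elliptic}.

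Next I would upgrade this coercivity to $\widetilde E_t$. By \ref{H3}, on each $I_j$ the weight $|U'|$ is comparable to $\max_{I_j}|U'|$, so $|d|\ge 1$ and the coefficients $\dc^{(1)}/\dc$, $\dc^{(2)}/d$ are bounded; combining the bound $|\dc^{(1)}|+|\dc^{(2)}|\le 2C\eps_0(1+1/\kappa)^2$ recalled in Section~\ref{sec:auxil-funct} with $\big|\de_y|U'|\big|=|U''|\le\eps_0|U'|$ from \ref{H2}, the perturbation condition \ref{P} makes these coefficients sufficiently small relative to $\gamma_0$. Pairing $\widetilde E_t h$ against $|U'|h$, integrating by parts in the first-order term via
\begin{align*}
\l D\big(\tfrac{\dc^{(1)}}{\dc}h\big),\, |U'|h\r = -\l \tfrac{\dc^{(1)}}{\dc}h,\, (\de_y|U'|)h + |U'|Dh\r,
\end{align*}
and estimating the three resulting error terms by Cauchy--Schwarz in $L^2(|U'|\dd y)$ (with a Young inequality for the $|U'|Dh$ contribution, using $k^2\|h\|^2_{L^2(|U'|)}\le N(h)^2$), one absorbs all corrections into $\tfrac{\gamma_0}{2}N(h)^2$ and obtains $\Re\l -\widetilde E_t h,\, |U'| h\r \ge \tfrac{\gamma_0}{2}N(h)^2$ for $h\in H^1_0(J)$. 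Via Lax--Milgram on the weighted space $H^1_0(J,|U'|\dd y)$, this coercivity also gives well-posedness of \eqref{eq:H1ell1} together with $N(\psi)<\infty$, so the estimate below is meaningful.

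Finally, applying the last bound with $h=\psi$ and using $\widetilde E_t\psi = g = E_t\psi^{(1)}$, I get
\begin{align*}
\tfrac{\gamma_0}{2}\,N(\psi)^2 \le \Re\l -\widetilde E_t\psi,\, |U'|\psi\r = \Re\l -E_t\psi^{(1)},\, |U'|\psi\r,
\end{align*}
and integrating by parts once on the right to shift $D^2$ off $\psi^{(1)}$,
\begin{align*}
\l -E_t\psi^{(1)},\, |U'|\psi\r = k^2\l \psi^{(1)},\,|U'|\psi\r + \l D\psi^{(1)},\,(\de_y|U'|)\psi\r + \l D\psi^{(1)},\,|U'|D\psi\r.
\end{align*}
Bounding each term by Cauchy--Schwarz in $L^2(|U'|\dd y)$, using $\big|\de_y|U'|\big|\le\eps_0|U'|$ and $k^2\|h\|^2_{L^2(|U'|)}\le N(h)^2$, the right-hand side is $\le C\,N(\psi^{(1)})\,N(\psi)$; dividing by $N(\psi)$ yields $N(\psi)\le \tfrac{2C}{\gamma_0}N(\psi^{(1)})$, which is the claim. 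The step I expect to be the main obstacle is the coercivity of $\widetilde E_t$: the first-order correction $2D\tfrac{\dc^{(1)}}{\dc}$ is exactly what could destroy positivity, and controlling it forces the integration by parts above together with both the smallness of $\dc^{(1)}$ supplied by \ref{P} and the fact that the level-set partition of \ref{H3} keeps $\dc$ bounded away from $0$, so that no extra degeneracy of $1/U'$ enters; the remaining manipulations are routine weighted integrations by parts as in Section~\ref{sec:simple_case}.
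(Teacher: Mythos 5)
Your proof is correct and follows essentially the paper's argument: test the modified equation against $-|U'|\psi$, establish weighted coercivity of the perturbed operator by absorbing the $\dc^{(1)}/\dc$ and $\dc^{(2)}/\dc$ corrections via \ref{P}, then rewrite $g=E_t\psi^{(1)}$, integrate by parts, and close with Cauchy--Schwarz and Young. The only cosmetic difference is that you treat the first-order perturbation by integration by parts onto $|U'|h$, whereas the paper commutes $\de_y-iktU'$ past $\frac{\dc^{(1)}}{\dc}$ via the Leibniz relations $\de_y\dc=\dc^{(1)}-\frac{U''}{U'}\dc$ and $\de_y\dc^{(1)}=\dc^{(2)}-\frac{U''}{U'}\dc^{(1)}$; these are equivalent manipulations.
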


\begin{proof}
We test \eqref{eq:H1ell1} with $-|U'| \psi$ and obtain that
\begin{align}
-\l |U'|  \psi, g \r=\l \nabla_{k,t} \psi,\nabla_{k,t}(|U'|\psi)\r +\l\left(2 (\de_{y}-iktU') \frac{\dc^{(1)}}{\dc}+\frac{\dc^{(2)}}{\dc}\right)\psi,|U'|\psi \r
\end{align}
Now, arguing as in the proof of Lemma \ref{lem:weighted_elliptic} and using that 
$U'\de_y|U'|=|U'|U''$, we have that 
\begin{align}
\Re\l \nabla_{k,t} \psi,\nabla_{k,t}(|U'|\psi)\r
&=\int_J |U'(y)| |\nabla_{k,t}\psi(y)|^{2}\dd y+\Re\l (\de_y-iktU') \psi,\psi \de_y|U'|)\r\notag\\
&=\int_J |U'(y)| |\nabla_{k,t}\psi(y)|^{2}\dd y+\frac12\l \de_y |\psi|^2, \de_y|U'|)\r-\Re\l iktU' \psi,\psi \de_y|U'|)\r\notag\\
&=\int_J |U'(y)| |\nabla_{k,t}\psi(y)|^{2}\dd y-\frac12\l  |\psi|^2, \de_{yy}|U'|)\r\notag\\
&=\int_J |U'(y)| |\nabla_{k,t}\psi(y)|^{2}\dd y-\frac12\int_J  \sign(U'(y))U'''(y) |\psi(y)|^2\dd y\notag\\
&\geq\left(1-\frac{1-\gamma_0}{k^2}\right)\int_J |U'(y)||\nabla_{k,t} \psi(y)|^2\dd y.
\end{align}
Moreover, in light of the relations 
\begin{align}
\de_y \dc=\dc^{(1)}- \frac{U''}{U'}\dc, \qquad \de_y \dc^{(1)}=\dc^{(2)}- \frac{U''}{U'}\dc^{(1)},
\end{align}
we deduce that
\begin{align}
\l(\de_{y}-iktU') \frac{\dc^{(1)}}{\dc}\psi,|U'|\psi \r
&=\l\frac{\dc^{(1)}}{\dc}(\de_{y}-iktU') \psi,|U'|\psi \r+\int_J\left[\frac{\dc^{(2)}}{\dc}-\left(\frac{\dc^{(1)}}{\dc}\right)^2\right]|U'(y)||\psi(y)|^2\dd y
\end{align}
In view on the assumptions on $\dc^{(1)}$ and $\dc^{(2)}$ and the above observation, we have
\begin{align}
|\l(\de_{y}-iktU') \frac{\dc^{(1)}}{\dc}\psi,|U'|\psi \r|\leq \frac{C}{k}\int_J |U'(y)||\nabla_{k,t} \psi(y)|^2\dd y.
\end{align}
Similarly, 
\begin{align}
|\l\frac{\dc^{(2)}}{\dc}\psi,|U'|\psi \r|\leq \frac{C}{k}\int_J |U'(y)||\nabla_{k,t} \psi(y)|^2\dd y.
\end{align}
Hence, using that $\gamma_{0}-\frac{2C}{k} \geq \frac{\gamma_{0}}{2}$
\begin{align}\label{eq:loaef}
\int_J |U'(y)||\nabla_{k,t} \psi(y)|^2\dd y \leq 2\Re\l \nabla_{k,t} \psi,\nabla_{k,t}(|U'|\psi)\r
\end{align}
On the other hand, using \eqref{eq:H1ell2} we find that
\begin{align}
-\l |U'|  \psi, g \r&=-\l|U'|\psi , (-k^2+(\de_{y}-iktU'(y))^{2})\psi^{(1)}\r\notag\\
&=\l|U'|\nabla_{k,t}\psi , \nabla_{k,t}\psi^{(1)}\r+\l\psi\de_y|U'| ,(\de_{y}-iktU')\psi^{(1)}\r.
\end{align}
Therefore, since $\de_y|U'|=\sign(U') U''$, we find that
\begin{align}
|\l |U'|  \psi, g \r|
&\leq \int_J |U'(y)||\nabla_{k,t} \psi(y)||\nabla_{k,t} \psi^{(1)}(y)|\dd y+ \int_J |U''(y)|| \psi(y)||\nabla_{k,t} \psi^{(1)}(y)|\dd y\notag\\
&\leq C\int_J |U'(y)||\nabla_{k,t} \psi(y)||\nabla_{k,t} \psi^{(1)}(y)|\dd y.
\end{align}
The claim thus follows from the above inequality and \eqref{eq:loaef}, by using H\"older's and Young's inequality.
 \end{proof}

In the case with boundary, we further need to control the boundary corrections
in Lemma \ref{lem:splitting}.

\begin{lemma}[c.f. \cite{Zill6}]
  \label{lem:boundaryCorrection}
  In the setting and notation of Lemma \ref{lem:H1equation}, let
  $I_{j}=(\a_{j},\b_{j})$ and $g
  \in L^{2}(I_{j})$ be any given function and let $(\chi_{j}g)_{n}$ denote the Fourier
  basis expansion on the interval $I_{j}$ with respect to the
  variable $z=\frac{U(y)-U(\a_{j})}{\min_{I_j} |U'|}$.
  Then, it holds that 
  \begin{align}
    \l g, ikB \e^{ikt(U(y)-U(\a))}\tilde{h}_{\a}\r &\leq C \sum_{j,n} \frac{|k(\chi_{j}g)_{n}|\|B\chi_{j} \tilde{h}_{\a}\|_{H^{1}} }{|k|+|n-kt\min(U')|}, \\
    \langle g, \frac{\e^{ikt(U(y)-U(\a))}}{iktU' \dc}h_{\a}\rangle &\leq C \sum_{j,n} \frac{|(\chi_{j}g)_{n}|\|\chi_{j} \tilde{h}_{\a}\|_{H^{1}}}{(|k|+|n-kt\min(U')|) |kt\min U'|}.
  \end{align}
  In particular, we can further estimate
  \begin{align}
    \sum_{j,n} \frac{|k(\chi_{j}g)_{n}|\|B\chi_{j} \tilde{h}_{\a}\|_{H^{1}} }{|k|+|n-kt\min(U')|}
    \leq \left\|\frac{B\tilde{h}_{\a}}{U'}\right\|_{H^{1}} C_{\delta} \left( \sum_{j,n} \frac{|(\chi_{j}g)_{n}|^{2}|\min U'|}{(1+|n/k-t\min U'|)^{1-\delta}} \right)     
  \end{align}
\end{lemma}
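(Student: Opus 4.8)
The plan is to reduce all three pairings, via the partition $\sum_j\chi_j^2=1$ and a bilipschitz rescaling on each $I_j$, to weighted sums of Fourier coefficients of smooth functions that are exponentially concentrated near the boundary point $\a$. First I would write $\l g,\cdot\r=\sum_j\l\chi_j g,\chi_j\cdot\r$, so that it suffices to treat each $I_j=(\a_j,\b_j)$ separately, and on $I_j$ pass to the coordinate $z=\frac{U(y)-U(\a_j)}{\min_{I_j}|U'|}$. By \ref{H3} this is bilipschitz with constants $1$ and $\frac{\max_{I_j}|U'|}{\min_{I_j}|U'|}\le C$ and sends $I_j$ to an interval of length $\gtrsim\kappa$ (the same change of variables used in Lemma \ref{lem:Bilipschitz}). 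Its purpose is that the oscillatory factor becomes a pure frequency: since $U(y)-U(\a)=\min_{I_j}|U'|\,z+(U(\a_j)-U(\a))$, one has $\e^{ikt(U(y)-U(\a))}=\e^{-ikt(U(\a)-U(\a_j))}\,\e^{ikt\min_{I_j}(U')\,z}$. Expanding $\chi_j g$ in the Fourier basis attached to the $z$-interval --- whose coefficients are exactly the $(\chi_j g)_n$ --- each pairing turns into $\sum_n\overline{(\chi_j g)_n}$ against the Fourier coefficient, at the shifted frequency $\sim n-kt\min_{I_j}(U')$, of the smooth weight: $ikB\chi_j\tilde{h}_{\a}$ times the Jacobian in the first case, and $\frac{\chi_j h_\a}{iktU'\dc}$ times the Jacobian in the second.

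The decay then comes from the homogeneous solutions. Under \ref{P} the zeroth order terms in \eqref{eq:homo1}--\eqref{eq:homo2} are dominated by $k^2$, and an energy/maximum-principle argument shows $h_\bullet,\tilde{h}_\bullet$ are concentrated near the relevant boundary point on the scale $1/k$; in particular $\|\tilde{h}_{\a}\|_{L^2(I_j)}+k^{-1}\|\de_y\tilde{h}_{\a}\|_{L^2(I_j)}\lesssim k^{-1}\|\tilde{h}_{\a}\|_{H^1(I_j)}$, and likewise for $h_\a$. Writing $f$ for the smooth weight (say $B\chi_j\tilde{h}_{\a}$ times the bounded Jacobian), we then have both $|\widehat f(m)|\lesssim\|f\|_{L^2}\lesssim\|B\chi_j\tilde{h}_{\a}\|_{H^1}/k$ and, after one integration by parts in $z$, $|\widehat f(m)|\lesssim\|\de_z f\|_{L^2}/|m|\lesssim\|B\chi_j\tilde{h}_{\a}\|_{H^1}/|m|$; taking the better of the two yields the denominator $|k|+|n-kt\min(U')|$. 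The second estimate is the same computation with $h_\a$ in place of $\tilde{h}_{\a}$ (comparable in $H^1$ under \ref{P}) and the extra factor $\frac1{iktU'\dc}$, which by the definition \eqref{eq:newdiff} of $\dc$ and by \ref{H3} is comparable to $\frac1{ikt\min_{I_j}|U'|}$, producing the additional $\frac1{|kt\min U'|}$.

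For the summed inequality I would factor out a $j$-uniform norm: by \ref{H2} the ratio $B/U'$ is bounded, and the rescaling together with \eqref{eq:part1} lets one dominate $\|B\chi_j\tilde{h}_{\a}\|_{H^1}$ by a fixed multiple of $\big\|\frac{B\tilde{h}_{\a}}{U'}\big\|_{H^1}$, up to the $\min_{I_j}|U'|$-weights carried through the change of variables. What is left is a bilinear sum $\sum_{j,n}|(\chi_j g)_n|\,\frac{|k|}{|k|+|n-kt\min U'|}$, which I would handle by Cauchy--Schwarz in $(j,n)$ after splitting $\frac{|k|}{|k|+|n-kt\min U'|}$ into a factor square-rooted against $|(\chi_j g)_n|$ and a residual factor absorbed into the stated weight $(1+|n/k-t\min U'|)^{-(1-\delta)}$; the surplus power $\delta>0$ makes the dual sum converge and yields $C_\delta$. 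I expect the main obstacle to be uniformity in $j$ and $k$ --- establishing the $1/k$-scale localization of $h_\bullet,\tilde{h}_\bullet$ on the possibly degenerate intervals $I_j$, and propagating the Jacobian of the bilipschitz map together with the $\chi_j$-dependent constants of \eqref{eq:part1} so that no constant in the three displayed bounds depends on $j$; the oscillatory bookkeeping (how $n$, the rescaled interval length and $kt\min U'$ combine, and how the exponents $1$ and $1-\delta$ emerge) is routine once the rescaling is set up but must follow the corresponding argument of \cite{Zill6} closely.
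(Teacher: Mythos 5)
Your proposal is correct and takes essentially the same route as the paper: $\chi_j$-localization, the bilipschitz change of variable $z=\frac{U(y)-U(\a_j)}{\min_{I_j}|U'|}$, Fourier expansion in $z$, a single integration by parts in $z$ (which gives the $|n-kt\min U'|$ in the denominator) combined with the trivial $L^2$ bound (which gives the $|k|$), and finally Cauchy--Schwarz in $(j,n)$ together with $\sum_n(1+|n/k-t\min U'|)^{-(1+\delta)}\lesssim C_\delta k$ and $\sum_j\|B\chi_j\tilde h_\a/\min U'\|_{H^1}^2\lesssim\|B\tilde h_\a/U'\|_{H^1}^2$ for the summed bound. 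You make explicit (via the $1/k$-scale exponential concentration of $h_\bullet,\tilde h_\bullet$) why the non-oscillatory trivial bound has the stated form, which the paper leaves implicit; the only thing you elide that the paper does spell out is the boundary term arising when $\a_j=\a$ (where $\chi_j$ does not vanish at $\a$), but the paper itself only remarks that this term is comparable, so this is a minor omission.
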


\begin{proof}
  In what follows, denote by $c_j=\min_{I_j}|U'|$.
  For convenience of notation, we may further without loss of generality assume
  that $U'>0$ on $I_{j}$.
  Expressed in terms of $y$, an orthonormal Fourier basis of $L^{2}(I_{j},\frac{U'}{c}
  dy)$ is given by
  \begin{align}
    e_{n}(y):=\frac{1}{\|\frac{U'}{c}\|_{L^{2}(I_{j},dy)}} \exp( in \frac{U(y)-U(a_{j})}{c} \frac{2\pi}{\frac{U(b_{j})-U(a_{j})}{c_{j}}}),
  \end{align}
  where $n \in \mathbb{Z}$.
  We note that, by condition \ref{H3} and the definition of
  $I_{j}$, $\frac{U'}{c_{j}}$ is comparable to $1$ and the $L^{2}$ normalizing
  factor $\sqrt{\frac{c_{j}}{|U(b_{j})-U(a_{j})|}} \approx
  \frac{1}{\sqrt{I_{j}}} $ is bounded above. Similarly, by the mean value
  theorem
  \begin{align}
 l_{j}:=\frac{2\pi}{\frac{U(b_{j})-U(a_{j})}{c_{j}}} = \frac{2 \pi}{|I_{j}|} \frac{c_{j}}{U'(\tilde{y})} \approx \frac{2 \pi}{|I_{j}|}
  \end{align}
  is bounded.
  Hence, the norms and normalizations with respect to
  $L^{2}(I_{j},dy)$ and $L^{2}(I_{j}, \frac{U'}{c} dy)$ are comparable within a
  uniform factor.
  Recalling our partition of unity $\chi_{j}$, we thus expand 
  \begin{align}
    \l g, ikB \e^{ikt(U(y)-U(\a))}\tilde{h}_{\a}\r = \sum_{j} \l \chi_{j} g , \frac{c}{U'} ikB \e^{ikt(U(y)-U(\a))}\tilde{h}_{\a}\r_{L^{2}(\frac{U'}{c_{j}} dy)} \\
    =  \sum_{j,n} (\chi_{j}g)_{n }\l  e_{n}, \frac{c}{U'} ikB \e^{ikt(U(y)-U(\a))}\tilde{h}_{\a}\r_{L^{2}(\frac{U'}{c_{j}} dy)} \\
    =  \sum_{j,n} (\chi_{j}g)_{n }\l  e_{n}, ikB \e^{ikt(U(y)-U(\a))}\tilde{h}_{\a}\r_{L^{2}(dy)},
  \end{align}
  and similarly for $\langle g, \frac{\e^{ikt(U(y)-U(\a))}}{iktU'
    \dc}h_{\a}\rangle$.
  By the above considerations, it thus suffices to estimate each summand for
  fixed $j$ and $n$.
  For convenience of notation we here rescaled $n$ by $l_{j}$. As noted above,
  $l_{j}$ is bounded above and upon further partitioning the intervals $I_{j}$,
  we may also assume $l_{j}$ is bounded away from 0. (If $|I_{j}|$ is large, we
  may use the $L^{2}$ normalization when summing in $n l_{j}$. A further
  partitioning is thus not necessary, but merely notationaly convenient.)
  It thus remains to show that
  \begin{align}
\l \e^{in\frac{U(y)-U(\a_j)}{c_j}}, ikB\chi_j \e^{ikt(U(y)-U(\a))}\tilde{h}_{\a}\r & \leq C\frac{|k|\|B \chi_{j} \tilde{h}_{a}\|_{H^{1}} }{|k|+|n-kt\min U'|},\label{eq:ineq1} \\
\langle  \e^{in\frac{U(y)-U(\a_j)}{c_j}}, \frac{\e^{ikt(U(y)-U(\a))}}{iktU' \dc}h_{\a}\rangle &\leq C\frac{|k|\|\chi_{j} h_{a}\|_{H^{1}}}{(|k|+|n-kt\min U'|) kt\min U'}\label{eq:ineq2} ,
  \end{align}
for a constant $C$ independent of $n$ and $j$.

For \eqref{eq:ineq1}, when $\a_j\neq \a$, an integration by parts yields 
(notice that no boundary term appears due to the presence of $\chi_j$)
\begin{align}
\l  \e^{inz}, ikB\chi_j \e^{ikt(U(y)-U(\a))}\tilde{h}_{\a}\r
&=\l   \e^{inz-ikt(U(y)-U(\a))}, ikB\chi_j \tilde{h}_{\a}\r\notag\\
&=\l \frac{1}{i(n/c_j-kt)U'}\de_y\e^{inz-ikt(U(y)-U(\a))}, ikB\chi_j \tilde{h}_{\a}\r\notag\\
&=-\l \frac{1}{i(n/c_j-kt)U'}\e^{inz-ikt(U(y)-U(\a))}, ik\de_y(B\chi_j \tilde{h}_{\a})\r\notag\\
&\quad-\l \frac{U''}{i(n/c_j-kt)(U')^2}\e^{inz-ikt(U(y)-U(\a))}, ikB\chi_j \tilde{h}_{\a}\r.
\end{align}
Recalling that $|U''|\leq C|U'|$ by \ref{H2}, the definition of $c_j$
and the normalization of the Fourier basis, the last two terms
can be bounded by
\begin{align}
  |\l  \e^{inz}, ikB\chi_j \e^{ikt(U(y)-U(\a))}\tilde{h}_{\a}\r|\leq
  C\frac{k\|B \chi_{j} \tilde{h}_{\a}\|_{H^{1}} }{k+|n-ktc_j|}\leq C\frac{k\|B \chi_{j} \tilde{h}_{\a}\|_{H^{1}} }{k+|n-kt\min U'|}
\end{align}
When $\a_j=\a$, $\chi_j(\a)=1$, and we have the additional boundary term
\begin{align}
\frac{kB(\a)}{(n/c_j-kt)U'(\a)},
\end{align}
which is comparable with the above estimates. Also, \eqref{eq:ineq2} follows by very similar arguments.
For the last estimate, we note that
\begin{align}
\sum_{j} \left\|\frac{B\chi_{j} \tilde{h}_{a}}{\min U'}\right\|_{H^{1}}^{2} \leq C \left\|\frac{B\tilde{h}_{a}}{U'}\right\|_{H^{1}}^{2},
\end{align}
since $\chi_{j}^{2}$ is a partition of unity with
  $\|\chi_{j}\|_{W^{{1,\infty}}}<C$.
  Furthermore, for every $\delta>0$, there exists $C_\delta>0$ such that
  \begin{align}
    \sum_{n} \frac{1}{(1+|n/k-t\min U'|)^{1+\delta}} < C_{\delta}k.
  \end{align}
  The result hence follows by applying the Cauchy-Schwarz inequality in $j$ and $n$.
\end{proof}
With these preparations, we can construct our building block for $A^{(1)}(t)$:
\begin{lemma}[c.f. \cite{Zill6}]
  \label{lem:modifiedMultiplier}
  For each interval $I_{j}$, there exists an operator $A^{(1)}_{j}(t)$ such that for
  any $g \in L^{2}(I)$
  \begin{align}
    \langle \chi_{j}g, A^{(1)}_{j}(t) \chi_{j}g \rangle &\approx \|\chi_{j}g\|_{L^{2}}^{2}, \\
    -\langle \chi_{j}g, \dot A^{(1)}_{j}(t) \chi_{j}g  \rangle &\geq \|\chi_{j}g\|_{H^{{-1}}_{t}}^{2} + \sum_{n} \frac{|(\chi_{j}g)_{n}|^{2}\min U'}{(|k|+|n-kt\min(U')|^{1-\delta}) |kt\min U'|^{1-\delta}}\label{eq:signAdot},
  \end{align}
  where the sum over $n$ again denotes the (rescaled) basis expansion as in the
  previous lemmas.
\end{lemma}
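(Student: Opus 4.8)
The plan is to construct $A^{(1)}_j(t)$ as an explicit Fourier multiplier on $I_j$, mimicking the $H^1$ multiplier of \cite{Zill6} but carried out interval by interval, and to check that all the constants are uniform in $j$ using \ref{H3}. Since $U$ is bilipschitz on $I_j$ we first pass, exactly as in the proof of Lemma \ref{lem:Bilipschitz}, to the rescaled variable $z=(U(y)-U(\a_j))/\min_{I_j}|U'|$ and the associated orthonormal basis $\{e_n\}_{n\in\ZZ}$, whose normalization is comparable to that of $L^2(I_j,\dd y)$ (as in the proof of Lemma \ref{lem:boundaryCorrection}). In this basis the operator $\nabla_{k,t}=(ik,\de_y-iktU')$ is, up to factors controlled by $\max_{I_j}|U'|/\min_{I_j}|U'|<C$, diagonal with symbol $(k,\,n-kt\min_{I_j}U')$, so that $\|\chi_j g\|_{H^1_t}$ and the two quantities on the right-hand side of \eqref{eq:signAdot} become comparable to sums over $n$ of $|(\chi_j g)_n|^2$ times explicit $(t,n)$-densities. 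It thus suffices to exhibit a scalar symbol $m=m(t,n)$ with $t\mapsto m(t,n)\in C^1$, $\de_t m\le 0$, $m\approx 1$ uniformly, and $-\de_t m(t,n)$ dominating the sum of these densities from below; then $A^{(1)}_j(t):=\sum_n m(t,n)\langle\,\cdot\,,e_n\rangle e_n$ is symmetric, positive, with $C^1$ dependence on $t$, and both claims follow by Plancherel in the $e_n$-basis.

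For $m$ I would simply take $m(t,n):=1+\int_t^\infty\rho(s,n)\,\dd s$, where $\rho(s,n)$ is a fixed constant multiple of the sum of the two $(t,n)$-densities appearing on the right-hand side of \eqref{eq:signAdot}. Granting the bound $\Lambda:=\sup_n\int_0^\infty\rho(s,n)\,\dd s<\infty$, one has $1\le m\le 1+\Lambda$, so $m\approx 1$ uniformly, while $-\de_t m=\rho$ is exactly the lower bound demanded by \eqref{eq:signAdot}. (Equivalently, one may take $m$ to be a finite product of elementary monotone factors, one per density; this has the cosmetic advantage that each factor is individually comparable to $1$.) Everything thus reduces to verifying that $\Lambda<\infty$ \emph{uniformly in $j$, $k$ and $n$}.

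This uniform integral bound is the only genuinely new ingredient compared with the non-degenerate setting of \cite{Zill6}, and it is where the work lies. One first rescales time by $\tau=t\min_{I_j}|U'|$, as in Lemma \ref{lem:Bilipschitz}, which removes the spurious powers of $\min_{I_j}|U'|$ hidden in the densities. The contribution of the $\|\cdot\|_{H^{-1}_t}^2$-density then reduces, after the substitution $s=n-k\tau$, to $\tfrac1k\int(1+k^2+s^2)^{-1}\,\dd s\lesssim k^{-2}$. For the boundary-layer density $\min U'\,\big((|k|+|n-kt\min U'|^{1-\delta})\,|kt\min U'|^{1-\delta}\big)^{-1}$ one splits the $\tau$-integral into the resonant window $\{|n-k\tau|\le k\}$, of length $O(1)$, on which the integrand is $\lesssim k^{-1}\langle n\rangle^{-(1-\delta)}$, and its complement, on which the substitution $s=k\tau$ produces a Beta-type integral $k^{-1}\int|s|^{-(1-\delta)}|n-s|^{-(1-\delta)}\,\dd s$; this last integral converges precisely because $\delta<\tfrac12$, and by scaling is $\lesssim\langle n\rangle^{2\delta-1}\le 1$. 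Summing the two contributions gives $\Lambda\lesssim 1$ with all implicit constants depending on $U$ only through $\max_{I_j}|U'|/\min_{I_j}|U'|$, hence $j$-independent by \eqref{eq:mildly32}. Transferring back through the $e_n$-expansion yields the stated properties of $A^{(1)}_j(t)$ with $j$-independent constants. I expect the resonant/non-resonant splitting of the boundary-layer density — which is what forces the restriction $\delta<\tfrac12$ and couples the time rescaling with the level-set structure of \ref{H3} — to be the main obstacle.
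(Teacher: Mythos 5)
Your proposal is essentially the same construction as the paper's — a scalar multiplier in the rescaled $e_n$-basis on $I_j$ — but it fills in all the verification the paper's one-line proof omits. The paper simply declares $A^{(1)}_j(t)$ to be the multiplier
\[
\exp\left( \arctan(n-kt \min(U')) + \int_0^{t} \frac{\min U'}{(|k|+|n-k\tau\min(U')|^{1-\delta}) |k\tau\min U'|^{1-\delta}} \dd\tau \right)
\]
and says nothing further; in particular it never checks that the integral is uniformly bounded (which is exactly your $\Lambda<\infty$), nor does it translate $H^{-1}_t$ into the $e_n$-symbol. Your additive form $m(t,n)=1+\int_t^\infty\rho$ is a cosmetic variant of the paper's exponential, and your resonant/non-resonant split with the Beta integral (giving $\delta<\tfrac12$) is precisely the missing ingredient that makes the uniformity in $j,k,n$ true. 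One further point in your favor: as literally written, the paper's $+\int_0^t$ term has a \emph{positive} $t$-derivative, so $-\dot A^{(1)}_j$ would not dominate the boundary-layer density; the multiplier should carry $-\int_0^t$ (equivalently $+\int_t^\infty$), which is what your construction uses. So you take the same route but land on the correct sign and actually prove the two asserted properties. The only step you state a bit loosely is the reduction of $\|\chi_j g\|_{H^{-1}_t}$ to the discrete symbol $(k,\,n-kt\min_{I_j}U')$: since $\de_y e_n = in\,l_j\frac{U'(y)}{c_j}e_n$ with $U'/c_j$ only comparable to $1$ rather than equal, the operator $\nabla_{k,t}$ is diagonal only up to bounded conjugation, and one needs to invoke the duality/quadratic-form comparison to transfer this to $H^{-1}_t$; you flag this with ``up to factors controlled by $\max|U'|/\min|U'|$,'' which is the right idea, but it deserves a sentence of justification since $H^{-1}_t$ is a negative-order norm.
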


\begin{proof}
  We define $A^{(1)}_{j}(t)$ as the multiplier
  \begin{align}
    \exp\left( \arctan(n-kt \min(U')) + \int_0^{t} \frac{\min U'}{(|k|+|n-k\tau\min(U')|^{1-\delta}) |k\tau\min U'|^{1-\delta}} \dd\tau \right).
  \end{align}

\end{proof}

\begin{proposition}
\label{prop:F1}
  Let $\beta_{\bullet}, F^{(1)}$ be as in Lemma \ref{lem:splitting} and assume the
  conditions of Theorem \ref{thm:L2}.
  Then
  \begin{align}
    \|\beta_{\bullet}(t)\|_{L^{2}} &\leq C \left|\omega^{in}|_{\bullet}\right|, \\
    \|F^{(1)}(t)\|_{L^{2}} &\leq C \|\omega^{in}\|_{H^{1}}.
  \end{align}
\end{proposition}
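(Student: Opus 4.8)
The plan is to run, for each of $\beta_\bullet$ and $F^{(1)}$, an energy estimate against the modified multiplier
\begin{align*}
A^{(1)}(t)=\sum_{j\in\J}\chi_j\,A^{(1)}_j(t)\,\chi_j,
\end{align*}
assembled from the single-interval operators of Lemma \ref{lem:modifiedMultiplier}, in exact analogy with the way $A(t)$ is used in the proof of Theorem \ref{thm:L2}, but now for the modified scattering system of Lemmas \ref{lem:H1equation} and \ref{lem:splitting}. Since $A^{(1)}_j(t)\approx\Id$ uniformly in $t$, one has $\l g,A^{(1)}(t)g\r\approx\|g\|_{L^2}^2$, so it suffices to bound $\ddt\l\beta_\bullet,A^{(1)}(t)\beta_\bullet\r$ and $\ddt\l F^{(1)},A^{(1)}(t)F^{(1)}\r$ by an integrable-in-time forcing plus terms absorbable either into $\l\cdot,\dot A^{(1)}\cdot\r\le0$ or by Gr\"onwall. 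The short-time range $t\le1$ would be disposed of first by a direct argument: there $E_t$ is uniformly elliptic and the boundary layer has not yet formed, so $\|\dg F(t)\|_{L^2}\lesssim\|\omega^{in}\|_{H^1}$ follows from \eqref{eq:Eulerkscat} and continuity of the flow, and hence $\|\beta_\bullet(t)\|_{L^2}+\|F^{(1)}(t)\|_{L^2}\lesssim\|\omega^{in}\|_{H^1}$ by Lemma \ref{lem:splitting}. From now on we would assume $t\ge1$, and recall that $F^{(1)}|_{t=0}=\dc\de_y\omega^{in}$ has $L^2$-norm $\lesssim\|\omega^{in}\|_{H^1}$ while $\beta_\bullet|_{t=0}=0$.

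\emph{The transport-type terms.} Differentiating $\l F^{(1)},A^{(1)}F^{(1)}\r$ and inserting \eqref{eq:DyF1}, the paired contribution of $ikB\tPsi$ together with $(\dg B)ik\Psi$ is treated exactly as the $ikB\psi$ term in Proposition \ref{prop:reduction}: localise with $\chi_j$, integrate by parts onto the localised potentials $\tPhi_j,\tPhi_{j,A}$ of \eqref{eq:local3}--\eqref{eq:local4}, use \ref{H2}, \eqref{eq:part1} and the localised weighted elliptic inequality (the analogue of Lemma \ref{lem:local_elliptic} obtained by chaining Lemma \ref{lem:H1elliptic} with Lemma \ref{lem:weighted_elliptic}), and finally \eqref{eq:signAdot}. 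Using $|\dg B|=|\dc B'|\lesssim\eps_0|U'|$ and the algebraic identity $ikU'\Psi=\tfrac1t\big(\tfrac1\dc\dg\Psi-(\de_y-iktU')\Psi\big)$ to trade the apparent loss of a weight for a factor $t^{-1}$, this produces a bound by $C\eps_0\sum_j\int_{I_j}|U'|\big(|\nabla_{k,t}\tPhi_j|^2+k^2|\nabla_{k,t}\tPhi_{j,A}|^2\big)$ plus a residual controlled by $\eps_0 t^{-1}\|F^{(1)}\|_{L^2}$ times a time function that is square-integrable because $\int_I|U'||\nabla_{k,t}\Psi|^2=\int_I|U'||\nabla_k\psi|^2\in L^1_t$ by Theorem \ref{thm:L2} (here $S(-t)$ is a pointwise unimodular multiplier). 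The first piece is absorbed into $-C\eps_0\l F^{(1)},\dot A^{(1)}F^{(1)}\r$ once \ref{P} is invoked. The $ikB\tPsi_\bullet$ term in \eqref{eq:beta} is of the same type.

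\emph{The boundary-correction terms.} These are the genuinely new contributions and the core of the argument. Each is a rank-one object of the form $[\text{coefficient}]\cdot\big\l\,\cdot\,,\,e^{iktU}h_\bullet/(tU'\dc)\big\r\,e^{iktU}\tilde h_\bullet$, together with the terms linear in $F$ carrying $\de_y(h_\bullet/tU')$ and the pure forcing $t^{-1}\omega^{in}|_\bullet\,B\,e^{iktU}\tilde h_\bullet$. Pairing each against $A^{(1)}$ applied to $\beta_\bullet$ (resp.\ $F^{(1)}$) and invoking Lemma \ref{lem:boundaryCorrection}, the self-interaction terms (bilinear in $\beta_\bullet$, resp.\ in $F^{(1)}$) reduce, after one Young inequality, to the quantities $\sum_n|(\chi_j g)_n|^2\min U'\,\big[(|k|+|n-kt\min U'|^{1-\delta})|kt\min U'|^{1-\delta}\big]^{-1}$ on the right of \eqref{eq:signAdot}, hence are absorbed into $-\l g,\dot A^{(1)}g\r$. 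The remaining, genuinely forced, pieces (those carrying $F$ or $\omega^{in}|_\bullet$) are estimated with the same non-stationary phase bounds of Lemma \ref{lem:boundaryCorrection} and the exponential concentration of $h_\bullet,\tilde h_\bullet$ near $\bullet$; after a Cauchy--Schwarz in the basis index $n$ (using $\sum_n(1+|n/k-t\min U'|)^{-1-\delta}\le C_\delta k$) one obtains a bound of the form $C\big(\|F(t)\|_{L^2}+|\omega^{in}|_\bullet|\big)\,m(t)\,\big(-\l g,\dot A^{(1)}g\r\big)^{1/2}$ with $m\in L^2_t(1,\infty)$. Since $\|F(t)\|_{L^2}=\|\omega(t)\|_{L^2}\lesssim\|\omega^{in}\|_{L^2}$ and $|\omega^{in}|_\bullet|\lesssim\|\omega^{in}\|_{H^1}$ by one-dimensional Sobolev embedding, Young's inequality splits this into a piece absorbed by $-\l g,\dot A^{(1)}g\r$ and a piece $C m(t)^2(\|\omega^{in}\|_{H^1}^2+|\omega^{in}|_\bullet|^2)\in L^1_t$. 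Integrating the resulting differential inequality and using $\l g,A^{(1)}g\r\approx\|g\|_{L^2}^2$ together with the above initial data then gives $\|\beta_\bullet(t)\|_{L^2}\lesssim|\omega^{in}|_\bullet|$ and $\|F^{(1)}(t)\|_{L^2}\lesssim\|\omega^{in}\|_{H^1}$.

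\emph{Main obstacle.} The delicate part is not the bookkeeping but the exact matching, term by term, of the time decay extracted from the oscillations $e^{iktU}$ via Lemma \ref{lem:boundaryCorrection} against precisely what the multiplier $A^{(1)}$ can absorb through \eqref{eq:signAdot}, while carrying the degenerate weight $|U'|$ through the localisation; this forces working in $H^1_t(|U'|)$ and exploiting that the $I_j$ are (dyadic) level sets of $U'$, so that $|U'|\approx\min_{I_j}|U'|$ on each $I_j$ and the $\min U'$-factors in Lemmas \ref{lem:boundaryCorrection}--\ref{lem:modifiedMultiplier} line up. A secondary difficulty is the near-$t=1$ (logarithmic) behaviour of the boundary corrections, which is the reason the short-time estimate above is carried out separately.
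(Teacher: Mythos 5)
Your proposal follows essentially the same route as the paper: a Lyapunov-functional energy estimate with the modified multiplier $A^{(1)}(t)=\sum_j\chi_jA^{(1)}_j(t)\chi_j$ from Lemma \ref{lem:modifiedMultiplier}, separate short-time treatment by Gr\"onwall, localization of the stream-function contribution onto the potentials $\tPhi_{j},\tPhi_{j,A}$ with the weighted-elliptic reductions of Lemmas \ref{lem:H1elliptic} and \ref{lem:local_elliptic}, and absorption of the boundary-correction pairings into $-\l g,\dot A^{(1)}(t)g\r$ via Lemma \ref{lem:boundaryCorrection} and \eqref{eq:signAdot}, with Young's inequality splitting the $\omega^{in}|_\bullet$ forcing into an absorbable piece plus an $L^1_t$ remainder. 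The key matching of the decay produced by Lemma \ref{lem:boundaryCorrection} against exactly what \eqref{eq:signAdot} can absorb is indeed the crux, and you identify it correctly.

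Two small remarks on bookkeeping, which do not affect the structure of the argument. First, the $\beta_\bullet$-equation \eqref{eq:beta} is forced only by the $\omega^{in}|_\bullet$-term and by $\tPsi_\bullet$ (which depends back on $\beta_\bullet$ through \eqref{eq:betapsi}); the $F$- and $\Psi$-dependent sources $\l\de_y(h_\bullet/tU')F,\cdot\r$ and $(\dg B)ik\Psi$ appear only in \eqref{eq:DyF1} for $F^{(1)}$. This is what yields the stated $\|\beta_\bullet\|_{L^2}\lesssim|\omega^{in}|_\bullet|$ (no $\|\omega^{in}\|_{L^2}$-dependence), whereas $F^{(1)}$ inherits the $\|\omega^{in}\|_{H^1}$ dependence through $\|F\|_{L^2}\le\|\omega^{in}\|_{L^2}$ and the initial datum $\dg F|_{t=0}$. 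Second, your algebraic identity $ikU'\Psi=\frac1t(\frac1\dc\dg\Psi-(\de_y-iktU')\Psi)$ for the term $(\dg B)ik\Psi$ is not needed: on each $I_j$ one has $|\dg B|=|\dc B'|\lesssim\eps_0\max_{I_j}|U'|\approx\eps_0|U'|$ by \ref{H2} and \ref{H3}, so this term sits inside the same weighted-elliptic localization as $ikB\tPsi$ and is absorbed directly by the multiplier's decay; no extra $t^{-1}$ must be extracted. With these adjustments the proposal matches the paper's argument.
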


\begin{proof}
Let $A^{(1)}$ be as in Lemma \ref{lem:modifiedMultiplier}. We begin by treating  $\beta_\a$. From \eqref{eq:beta} it follows
that
\begin{align}
\ddt \langle \beta_\a, A^{(1)}(t) \beta_\a \rangle &=\langle \beta_\a, \dot A^{(1)}(t) \beta_\a \rangle\notag \\
&\quad + 2 \Re \langle  ik B \tPsi_\a ,  A^{(1)}(t) \beta_\a \rangle \notag\\
&\quad + 2 \dc(\a)\Re \langle B   \e^{{ikt(U(y)-U(\a))}}\tilde{h}_{\a},  A^{(1)}(t) \beta_\a \rangle \l  \beta_\a,\frac{ \e^{ikt(U(y)-U(\a))}}{tU'd} h_\a  \r\notag\\
&\quad - 2\dc(\b) \Re \langle  B    \e^{{ikt(U(y)-U(\b))}}\tilde{h}_{\b},  A^{(1)}(t) \beta_\a \rangle\l  \beta_\a,\frac{ \e^{ikt(U(y)-U(\b))}}{tU'd} h_\b  \r \notag\\
&\quad + 2 \frac{\dc(\a)\omega^{in}(\a)}{tU'(\a)}\Re \langle B  \e^{{ikt(U(y)-U(\a))}}\tilde{h}_{\a},  A^{(1)}(t) \beta_\a \rangle 
\end{align}
Following a similar strategy as in Section \ref{sec:simple_case}, we define
localized potentials $\Psi_{\a,A,j}^{(1)}$ as the solutions of
\begin{align*}
  (-k^{2}+(\de_{y}-iktU')^{2}) \Psi_{\a,A,j}^{(1)}&= A_{j}(t)\chi_{j} \beta_{a} \text{ on } I_{j} \\
  \Psi_{\a,A,j}^{(1)}|_{\p I_{j}}=0.
\end{align*}
As $\chi_{j}^{2}$ is a partition of unity, we obtain the identity
  \begin{align}
  A^{(1)}(t) \beta_\a= \sum_{j}\chi_{j} (-k^{2}+(\de_{y}-iktU')^{2}) \Psi_{\a,A,j}^{(1)}
  \end{align}
  and integrate the second term above by parts to estimate
  \begin{align}
    2 \Re \langle Bik \Psi_{i}^{(1)},  A^{(1)}(t) \beta_{i} \rangle & \leq C \int |U'| (k^{2}|\Psi_{i}^{(1)}|^{2} +|(\de_{y}-iktU')\Psi_{i}^{(1)}|^{2} ) \\
   & \quad + \sum_{j}\int |U'| (k^{2}|\Psi_{i,A,j}^{(1)}| +|(\de_{y}-iktU')\Psi_{i,A,j}^{(1)}|^{2}). 
  \end{align}
  We then apply Lemma \ref{lem:H1elliptic} to estimate $\Psi_{i}^{(1)}$ in terms
  of a stream function given by the standard elliptic operator, which in turn is
  estimated by localized stream functions as in Section \ref{sec:simple_case}.

  In order to estimate the scalar product involving $h_{i}$ and
  $\tilde{h}_{i}$ on the second line, we use Lemma \ref{lem:boundaryCorrection}.
  Here, we further note that the additional term $\frac{1}{|kt\min U'|}$ can be
  written as $\frac{1}{|kt|^{\delta}} \frac{|\min U'|^{-\delta}}{|kt\min
    U'|^{1-\delta}}$.
  Our construction of the modified multiplier in Lemma
  \ref{lem:modifiedMultiplier}, was chosen in just such a way that we
  can absorb this contribution using $\langle \beta_{i}, \dot A^{(1)}(t)
  \beta_{i} \rangle$, provided a smallness assumption is satisfied.
  However, this smallness criterion is sure to hold for large times, since
  $\frac{1}{|kt|^{\delta}}$ tends to zero as $t\rightarrow \infty$.

  Finally, for the last contribution due to $\omega^{in}(\bullet)$, we use Young's
  inequality to estimate
  \begin{align}
     2\Re \frac{ikB\omega^{in}(\bullet)}{iktU'(\bullet)} \langle A^{(1)} \beta_{\bullet},\e^{ikt U(y)-U_{\bullet}} \tilde{h}_{\bullet}\rangle \leq \frac{C (B\omega^{in}(\bullet))^{2}}{U'(\bullet)^{2} t^{1+\sigma}} + C \frac{C (B\omega^{in}(\bullet))^{2}}{U'(\bullet)^{2} t^{1-\sigma}} |\langle A^{(1)} \beta_{'bullet},\e^{ikt U(y)-U_{\bullet}} \tilde{h}_{\bullet}\rangle|^{2}.
  \end{align}
  Here, we can choose $\sigma=\delta$ or $\delta< \sigma <1$ so that the second
  term is small and can be absorbed as the previous term.
  
  In summary, we hence obtain that for $t$ sufficiently big
  \begin{align}
    \ddt \langle \beta_{\bullet}, A^{(1)}(t) \beta_{\bullet} \rangle \leq c \langle \beta_{\bullet}, \dot A^{(1)}(t) \beta_{\bullet} \rangle
    + C \frac{B\omega^{in}(\bullet)}{U'(\bullet)} \frac{1}{t^{1+\delta}}
  \end{align}
  and the result hence follows by integration and using Gronwall's lemma for
  small times.

  Similarly, for $F^{(1)}$, we conclude that
  \begin{align}
    \ddt \langle F^{(1)}, A^{(1)}(t) F^{(1)} \rangle \leq  c \langle F^{(1)}, \dot A^{(1)}(t) F^{(1)} \rangle
    + C  \frac{\|F(t)\|_{L^{2}}}{t^{1+\delta}} \leq c \langle F^{(1)}, \dot A^{(1)}(t) F^{(1)} \rangle + C \frac{\|\omega^{in}\|_{L^{2}}}{t^{1+\delta}},
  \end{align}
  and hence the result follows.
\end{proof}

\section{Splitting and weighted \texorpdfstring{$H^2$}{H2} stability}\label{sec:H2_2}
In the following we show that the solution operator which map $\omega^{in}\mapsto F^{(1)}$
is not only bounded as an operator from $H^{1}$ to $L^{2}$ but also from $H^{2}$
to $H^{1}$.
In contrast, as studied in Section \ref{sec:splitting_beta}, $\omega^{in}\mapsto
\beta$ does not exhibit higher stability, but rather grows unbounded in
$L^{\infty}$ and $H^{1/2+}$ as time tends to infinity. However, we show that
stability holds in weighted spaces which still allow to establish the optimal
decay rates in the inviscid damping estimates.

We begin our study of higher regularity of $F^{(1)}$ starting from \eqref{eq:DyF1}-\eqref{eq:DyF2} and applying the derivative operator $\dg$. 
We obtain a largely
similar equation, where we again have to change our elliptic operator and
account for changed boundary data. For convenience, we define
\begin{align}\label{eq:Rdef}
R= F^{(1)} +\left[\de_{y} \left(\frac{U''}{U'}\dc\right)- \frac{U''}{U'}\dc^{(1)}- 2 (\de_{y}-iktU') \frac{U''}{U'}\dc\right]\left(\de_y-iktU'\right)\Psi,
\end{align}
the right-hand side of \eqref{eq:DyF2}

\begin{lemma}\label{lem:F2}
Let $F^{(1)}$ be as in Lemma \ref{lem:splitting} and assume that the
  assumptions of Theorem \ref{thm:L2} are satisfied.
  Then $\dg F^{(1)}$ is the unique solution of the following equation
  \begin{align}
\de_{t} \dg F^{(1)} &=  ik B \ttPhi+ik (\dg B)\tPsi + ik\dg[(\dg B) \Psi ]\notag\\
&\quad+  \dc(\a)\l  F^{(1)},\frac{ \e^{ikt(U(y)-U(\a))}}{tU'd} h_\a  \r \dg( B\e^{{ikt(U(y)-U(\a))}}\tilde{h}_{\a}) \notag\\
&\quad-  \dc(\b)\l  F^{(1)},\frac{ \e^{ikt(U(y)-U(\b))}}{tU'd} h_\b  \r    \dg(B\e^{{ikt(U(y)-U(\b))}}\tilde{h}_{\b})\notag\\
&\quad+   \dc(\a)\l  \de_y\left(\frac{h_\a}{tU'}\right)F,\e^{ikt(U(y)-U(\a))}  \r  \dg(B\e^{{ikt(U(y)-U(\a))}}\tilde{h}_{\a})\notag \\
&\quad-  \dc(\b)\l  \de_y\left(\frac{h_\b}{tU'}\right)F,\e^{ikt(U(y)-U(\b))}  \r    \dg(B  \e^{{ikt(U(y)-U(\b))}}\tilde{h}_{\b})\notag \\
&\quad-  \dc(\b)\l  \de_y\left(\frac{h_\b}{tU'}\right)F,\e^{ikt(U(y)-U(\b))}  \r    \dg(B  \e^{{ikt(U(y)-U(\b))}}\tilde{h}_{\b})\notag \\
&\quad+ ik B(\dg\tPsi|_{y=\a}) \e^{{ikt(U(y)-U(\a))}}\tilde{\tilde{h}}_{\a} +ik B(\dg\tPsi|_{y=b})    \e^{{ikt(U(y)-U(\b))}}\tilde{\tilde{h}}_{\b}
\end{align}
where
\begin{align}
&\left(-k^{2}+(\de_{y}-iktU')^{2}-4 (\de_{y}-iktU') \frac{\dc^{(1)}}{\dc}-\left(\frac{\dc^{(1)}}{\dc}\right)^2-2\frac{\dc^{(2)}}{d}\right) \ttPhi =\dg R \notag\\
&\qquad- 2 (\de_{y}-iktU') \frac{U''}{U'}\dc\left(\de_y-iktU'\right)\tPsi  
+\left(\frac{U'''}{U'}-2\left(\frac{U''}{U'}\right)^2\right)\dc\left(\de_y-iktU'\right)\tPsi\notag\\
&\qquad+2(\de_{y}-iktU')\dc \left(\frac{\dc^{(1)}}{\dc}\right)' \tPsi -2\dc^{(1)}\left(\frac{\dc^{(1)}}{\dc}\right)'\tPsi+2 \frac{U''}{U'}\dc\left(\de_y-iktU'\right)\frac{\dc^{(1)}}{\dc}\tPsi+\dc \left(\frac{\dc^{(2)}}{\dc}\right)'\tPsi
\end{align} 
with boundary conditions $\ttPhi|_{y=a,b} =0$. Above, for $\bullet=\a,\b$, we denoted by $\tilde{\tilde{h}}_\bullet$ the unique solution to
\begin{align}\label{eq:hheqn}
\left(-k^{2}+\de_{yy}-4 \de_{y} \frac{\dc^{(1)}}{\dc}-\left(\frac{\dc^{(1)}}{\dc}\right)^2-2\frac{\dc^{(2)}}{d}\right)\tilde{\tilde{h}}_\bullet=0
\end{align}
with boundary conditions
\begin{align}
\tilde{\tilde{h}}_{\a}(\a)=\tilde{\tilde{h}}_{\b}(\b)=1, \qquad \tilde{\tilde{h}}_{\a}(\b)=\tilde{\tilde{h}}_{\b}(\a)=0.\label{eq:boundhh}
\end{align} 
\end{lemma}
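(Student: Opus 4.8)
\textbf{Proof strategy for Lemma \ref{lem:F2}.}
The plan is to differentiate the system \eqref{eq:DyF1}--\eqref{eq:DyF2} by applying $\dg$, exactly mirroring the derivation of Lemma \ref{lem:H1equation} but now starting one level higher. First I would apply $\dg$ to the evolution equation \eqref{eq:DyF1} for $F^{(1)}$. The term $ikB\tPsi$ produces $ikB\dg\tPsi$ plus $ik(\dg B)\tPsi$; the term $(\dg B)ik\Psi$ produces $ik\dg[(\dg B)\Psi]$ directly (we leave it unexpanded, as stated); and each of the boundary-correction terms, being of the form (scalar)$\times B\e^{ikt(U-U_\bullet)}\tilde h_\bullet$ with the scalar \emph{independent of $y$}, simply has $\dg$ fall on the explicit $y$-dependent factor $B\e^{ikt(U-U_\bullet)}\tilde h_\bullet$, yielding the terms $\dg(B\e^{ikt(U-U_\bullet)}\tilde h_\bullet)$ in the claimed identity. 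This already accounts for every line of the asserted equation for $\de_t\dg F^{(1)}$ except the last, which I address below.

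Next I would derive the elliptic equation for $\dg\tPsi$. Starting from \eqref{eq:DyF2}, written as $\LL_t \tPsi = R$ with $\LL_t = -k^2+(\de_y-iktU')^2 - 2(\de_y-iktU')\frac{\dc^{(1)}}{\dc} - \frac{\dc^{(2)}}{\dc}$ and $R$ as in \eqref{eq:Rdef}, I apply $\dg$ to both sides to get $\LL_t \dg\tPsi = \dg R + [\LL_t,\dg]\tPsi$. The commutator is computed term by term using $[\de_y-iktU',\dg]=\dc^{(1)}\de_y - \frac{U''}{U'}\dc(\de_y-iktU')$ from \eqref{eq:comm0} and the identities $\de_y\dc=\dc^{(1)}-\frac{U''}{U'}\dc$, $\de_y\dc^{(1)}=\dc^{(2)}-\frac{U''}{U'}\dc^{(1)}$. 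The leading piece $[(\de_y-iktU')^2,\dg]$ is handled exactly as in \eqref{eq:comm1}--\eqref{eq:comm4}, now producing an \emph{additional} copy of the first-order correction term, which is why the modified operator for $\ttPhi$ carries $-4(\de_y-iktU')\frac{\dc^{(1)}}{\dc}$ rather than $-2$; the commutators of $\dg$ with the lower-order terms $-2(\de_y-iktU')\frac{\dc^{(1)}}{\dc}$ and $-\frac{\dc^{(2)}}{\dc}$ generate the remaining zeroth-order coefficients $-(\frac{\dc^{(1)}}{\dc})^2$ and the extra multiple of $\frac{\dc^{(2)}}{\dc}$, together with the source terms on the second and third displayed lines of the right-hand side for $\ttPhi$ (those involving $(\frac{\dc^{(1)}}{\dc})'$, $(\frac{\dc^{(2)}}{\dc})'$, $\frac{U'''}{U'}$, $(\frac{U''}{U'})^2$, and the $\frac{U''}{U'}\dc(\de_y-iktU')\frac{\dc^{(1)}}{\dc}$ term). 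Collecting everything gives the stated equation for $\ttPhi$ once one splits off the homogeneous part.

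The last step is the boundary decomposition. Since $\dg\tPsi$ does not vanish at $\p I$ in general, I write $\dg\tPsi = \ttPhi + (\dg\tPsi|_{y=\a})\e^{ikt(U-U(\a))}\tilde{\tilde h}_\a + (\dg\tPsi|_{y=\b})\e^{ikt(U-U(\b))}\tilde{\tilde h}_\b$, where $\tilde{\tilde h}_\bullet$ is forced to solve \eqref{eq:hheqn} with boundary data \eqref{eq:boundhh} precisely so that $\e^{iktU}\tilde{\tilde h}_\bullet$ is a homogeneous solution of the new modified operator (the analogue of \eqref{eq:homo3}). Substituting this decomposition into $ikB\dg\tPsi$ and using that, by Remark \ref{remark:boundary_data}, boundary values of $F^{(1)}$ and hence of $\dg\tPsi$ are preserved under the flow, produces the final line $ikB(\dg\tPsi|_{y=\a})\e^{ikt(U-U(\a))}\tilde{\tilde h}_\a + ikB(\dg\tPsi|_{y=\b})\e^{ikt(U-U(\b))}\tilde{\tilde h}_\b$. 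Uniqueness follows because \eqref{eq:hheqn}, under the smallness condition \ref{P} (which makes the $\dc^{(1)},\dc^{(2)}$ corrections subcritical relative to the $k^2$ gap as in Section \ref{sec:auxil-funct}), is a well-posed Dirichlet problem, and the coupled system for $(\dg F^{(1)},\ttPhi)$ is linear with bounded-in-time coefficients on each $I_j$. The main obstacle is purely bookkeeping: carefully tracking which commutator produces which of the many coefficient terms, and checking that the homogeneous solutions $\tilde{\tilde h}_\bullet$ are matched to the correct (doubly) modified operator so that the boundary terms close; there is no new analytic difficulty beyond what was already needed for Lemma \ref{lem:H1equation}.
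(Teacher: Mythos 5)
Your proposal follows essentially the same route as the paper: apply $\dg$ to the evolution equation \eqref{eq:DyF1} (with $\dg$ falling on the $y$-dependent factors $B\e^{ikt(U-U_\bullet)}\tilde h_\bullet$ since the inner products are scalars), apply $\dg$ to the elliptic equation \eqref{eq:DyF2} via the commutators already computed around \eqref{eq:comm0}--\eqref{eq:comm4}, sort commutator output into the modified operator versus the inhomogeneity, and split $\dg\tPsi$ into $\ttPhi$ plus homogeneous corrections $\tilde{\tilde h}_\bullet$ adapted to the new operator, which is exactly \eqref{eq:hhomoline}. The commutator bookkeeping you describe is the right one, including the observation that the $[(\de_y-iktU')^2,\dg]$ piece contributes an extra $-2(\de_y-iktU')\frac{\dc^{(1)}}{\dc}$, upgrading the coefficient from $-2$ to $-4$ in the $\ttPhi$-operator.

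One small but genuine inaccuracy: in your final step you say the last line, $ikB(\dg\tPsi|_{y=\a})\e^{ikt(U-U(\a))}\tilde{\tilde h}_\a+\cdots$, is obtained ``using that, by Remark \ref{remark:boundary_data}, boundary values of $F^{(1)}$ and hence of $\dg\tPsi$ are preserved under the flow.'' This is not used, and it is not true. Remark \ref{remark:boundary_data} guarantees $F|_{\p I}=\omega^{in}|_{\p I}$ only for the un-differentiated profile $F$; it is invoked in the proof of Lemma \ref{lem:H1equation} to replace $F(\bullet)$ by $\omega^{in}(\bullet)$. By contrast, the boundary traces $\dg\tPsi|_{y=\bullet}$ are time-dependent quantities (their evolution and logarithmic growth are analyzed later, after Lemma \ref{lem:coefficientsF2}). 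The final line of the asserted identity follows purely from substituting the algebraic decomposition \eqref{eq:hhomoline} into $ikB\dg\tPsi$; nothing about time-invariance of boundary data is required or available here.
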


\begin{proof}
We apply $\dg$ to \eqref{eq:DyF1} and obtain
  \begin{align}\label{eq:montenegro}
\de_{t} \dg F^{(1)} &=  ik B \dg\tPsi+ik (\dg B)\tPsi + \dg[(\dg B)ik \Psi ]\notag\\
&\quad+  \dc(\a)\l  F^{(1)},\frac{ \e^{ikt(U(y)-U(\a))}}{tU'd} h_\a  \r \dg( B\e^{{ikt(U(y)-U(\a))}}\tilde{h}_{\a}) \notag\\
&\quad-  \dc(\b)\l  F^{(1)},\frac{ \e^{ikt(U(y)-U(\b))}}{tU'd} h_\b  \r    \dg(B\e^{{ikt(U(y)-U(\b))}}\tilde{h}_{\b})\notag\\
&\quad+   \dc(\a)\l  \de_y\left(\frac{h_\a}{tU'}\right)F,\e^{ikt(U(y)-U(\a))}  \r  \dg(B\e^{{ikt(U(y)-U(\a))}}\tilde{h}_{\a})\notag \\
&\quad-  \dc(\b)\l  \de_y\left(\frac{h_\b}{tU'}\right)F,\e^{ikt(U(y)-U(\b))}  \r    \dg(B  \e^{{ikt(U(y)-U(\b))}}\tilde{h}_{\b}),
\end{align}
We then split $\dg \tPsi$ into a non-homogenous solution $\ttPhi$ with homogeneous Dirichlet boundary conditions and
the linear combination of homogeneous solutions to \eqref{eq:hheqn}, namely
\begin{align}\label{eq:hhomoline}
\dg \tPsi=\ttPhi+ (\dg\tPsi|_{y=\a}) \e^{{ikt(U(y)-U(\a))}}\tilde{\tilde{h}}_{\a} +(\dg\tPsi|_{y=b})    \e^{{ikt(U(y)-U(\b))}}\tilde{\tilde{h}}_{\b}.
\end{align}
It remains to establish the equation satisfied by $\Phi^{(2)}$ and $\tilde{\tilde{h}}_{\bullet}$. Applying $\dg$ to equation \eqref{eq:DyF2}, we obtain
\begin{align}\label{eq:braulio}
&\left(-k^{2}+(\de_{y}-iktU')^{2}-2 (\de_{y}-iktU') \frac{\dc^{(1)}}{\dc}-\frac{\dc^{(2)}}{d}\right) \dg\tPsi =\dg R \notag\\
&\qquad\qquad+[ (\de_{y}-iktU')^{2},\dg]\tPsi-2 [(\de_{y}-iktU') \frac{\dc^{(1)}}{\dc},\dg]\tPsi -[\frac{\dc^{(2)}}{d},\dg]\tPsi,
\end{align}   
where $R$ is defined in \eqref{eq:Rdef}.
The first commutator above has been computed in \eqref{eq:comm1} (see also \eqref{eq:comm2}, \eqref{eq:comm3} and \eqref{eq:comm4}) as
\begin{align}\label{eq:commbraulio}
[(\de_{y}-iktU')^{2}, \dg] \Psi
    &=- 2 (\de_{y}-iktU') \frac{U''}{U'}\dc\left(\de_y-iktU'\right)\tPsi+ 2 (\de_{y}-iktU') \frac{\dc^{(1)}}{\dc} \dg \tPsi\notag\\
    &\quad+  \left(\frac{U''}{U'}\dc\right)'\left(\de_y-iktU'\right)\tPsi+\frac{\dc^{(2)}}{d}\dg\tPsi- \frac{U''}{U'}\dc^{(1)}\left(\de_y-iktU'\right)\tPsi. 
\end{align}
Moreover, since $[AB,C]=A[B,C]+[A,C]B$, we take advantage of \eqref{eq:comm0} to obtain that
\begin{align}
[(\de_{y}-iktU') \frac{\dc^{(1)}}{\dc},\dg]\tPsi
&=(\de_{y}-iktU') [\frac{\dc^{(1)}}{\dc},\dg]+[(\de_{y}-iktU') ,\dg]\frac{\dc^{(1)}}{\dc}\notag\\
&=-(\de_{y}-iktU')\dc \left(\frac{\dc^{(1)}}{\dc}\right)' +\left(\dc^{(1)}\de_y- \frac{U''}{U'}\dc\left(\de_y-iktU'\right)\right)\frac{\dc^{(1)}}{\dc}\notag\\
&=-(\de_{y}-iktU')\dc \left(\frac{\dc^{(1)}}{\dc}\right)' +\dc^{(1)}\left(\frac{\dc^{(1)}}{\dc}\right)'+\left(\frac{\dc^{(1)}}{\dc}\right)^2\dg- \frac{U''}{U'}\dc\left(\de_y-iktU'\right)\frac{\dc^{(1)}}{\dc}
\end{align}
and
\begin{align}
[\frac{\dc^{(2)}}{d},\dg]=- \dc \left(\frac{\dc^{(2)}}{\dc}\right)'.
\end{align}
As in the proof of Lemma \ref{lem:H1equation}, we consider terms involving $\dg$ as part of the modified elliptic
operator and put every other term into the inhomogeneity. Hence, from \eqref{eq:braulio} we find that
\begin{align}\label{eq:braulio1}
&\left(-k^{2}+(\de_{y}-iktU')^{2}-4 (\de_{y}-iktU') \frac{\dc^{(1)}}{\dc}-\left(\frac{\dc^{(1)}}{\dc}\right)^2-2\frac{\dc^{(2)}}{d}\right) \dg\tPsi =\dg R \notag\\
&\qquad- 2 (\de_{y}-iktU') \frac{U''}{U'}\dc\left(\de_y-iktU'\right)\tPsi  
+\left(\frac{U'''}{U'}-2\left(\frac{U''}{U'}\right)^2\right)\dc\left(\de_y-iktU'\right)\tPsi\notag\\
&\qquad+2(\de_{y}-iktU')\dc \left(\frac{\dc^{(1)}}{\dc}\right)' \tPsi -2\dc^{(1)}\left(\frac{\dc^{(1)}}{\dc}\right)'\tPsi+2 \frac{U''}{U'}\dc\left(\de_y-iktU'\right)\frac{\dc^{(1)}}{\dc}\tPsi+\dc \left(\frac{\dc^{(2)}}{\dc}\right)'\tPsi
\end{align} 
The proof is concluded by plugging the above linear combination \eqref{eq:hhomoline} into \eqref{eq:montenegro}.
\end{proof}

Following a similar strategy as in the previous section, we show that for a
mildly degenerate flow the operator mapping $F$ to $\ttPhi$ can be estimated
by our standard elliptic operator, similarly to Lemma \ref{lem:H1elliptic}.

\begin{lemma}\label{lem:H2elliptic}
In the setting of Lemma \ref{lem:F2}, suppose that \ref{P} is satisfied.
Then for any interval $J$ and any given function $g \in L^{2}(J)$, the unique solution $\psi$ of  
\begin{align}
    \left(-k^{2}+(\de_{y}-iktU')^{2}-4 (\de_{y}-iktU') \frac{\dc^{(1)}}{\dc}-\left(\frac{\dc^{(1)}}{\dc}\right)^2-2\frac{\dc^{(2)}}{d}\right)\psi &=g \\
    \psi|_{\de J}&=0
  \end{align}
  and the unique solution $\psi^{(2)}$ of
  \begin{align}
    (k^2+(\de_{y}-iktU')^{2})\psi^{(2)}&= g, \\
    \psi^{(1)}|_{\de J}&=0
  \end{align}
  satisfy 
    \begin{align}
    \int_{I} |U'(y)| |\nabla_{k,t}\psi(y)|^{2}\dd y \leq C \int_{I} |U'(y)| |\nabla_{k,t}\psi^{(2)}(y)|^{2}\dd y. 
  \end{align}
\end{lemma}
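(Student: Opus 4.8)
The plan is to mimic the proof of Lemma \ref{lem:H1elliptic}: the operator on the left-hand side differs from the unmodified operator $-k^{2}+(\de_{y}-iktU')^{2}$ only through lower-order terms built from $\dc^{(1)}/\dc$ and $\dc^{(2)}/\dc$ (now with coefficients $4$ and $2$, together with the new zeroth-order term $-(\dc^{(1)}/\dc)^{2}$), and by Section \ref{sec:auxil-funct} and \ref{P} all of these are small. First I would test the equation for $\psi$ against $-|U'|\psi$ and take real parts; since $\psi$ vanishes on $\de J$, integration by parts produces the identity
\begin{align}
-\Re\l |U'|\psi, g\r &= \Re\l \nabla_{k,t}\psi, \nabla_{k,t}(|U'|\psi)\r + 4\Re\l (\de_{y}-iktU')\tfrac{\dc^{(1)}}{\dc}\psi, |U'|\psi\r \notag\\
&\quad + \Re\l \big(\tfrac{\dc^{(1)}}{\dc}\big)^{2}\psi, |U'|\psi\r + 2\Re\l \tfrac{\dc^{(2)}}{\dc}\psi, |U'|\psi\r.
\end{align}

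For the first term on the right I would repeat the computation from Lemma \ref{lem:weighted_elliptic} (using $U'\de_{y}|U'|=|U'|U''$, the fact that $\de_{yy}|U'|=\sign(U')U'''$ in the sense of \eqref{eq:weakder}, and \eqref{eq:mildy1}) to get the coercive lower bound $\big(1-\tfrac{1-\gamma_{0}}{k^{2}}\big)\int_{J}|U'||\nabla_{k,t}\psi|^{2}$, which is $\geq \gamma_{0}\int_{J}|U'||\nabla_{k,t}\psi|^{2}$ for $k\geq1$. For the three correction terms I would use, exactly as in Lemma \ref{lem:H1elliptic}, the relations $\de_{y}\dc=\dc^{(1)}-\tfrac{U''}{U'}\dc$ and $\de_{y}\dc^{(1)}=\dc^{(2)}-\tfrac{U''}{U'}\dc^{(1)}$: an integration by parts rewrites the first-order term as $\l \tfrac{\dc^{(1)}}{\dc}(\de_{y}-iktU')\psi, |U'|\psi\r + \int_{J}\big[\tfrac{\dc^{(2)}}{\dc}-(\tfrac{\dc^{(1)}}{\dc})^{2}\big]|U'||\psi|^{2}$, and then Cauchy--Schwarz together with $k^{2}|\psi|^{2}\le|\nabla_{k,t}\psi|^{2}$ and the bounds on $\dc^{(1)},\dc^{(2)}$ from Section \ref{sec:auxil-funct} bound each of the three terms by $\tfrac{C}{k}\int_{J}|U'||\nabla_{k,t}\psi|^{2}$. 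Since $k\ge1$ and \ref{P} holds, the bilinear form remains bounded below by a constant multiple of $\int_{J}|U'||\nabla_{k,t}\psi|^{2}$, which yields the coercivity estimate $\int_{J}|U'||\nabla_{k,t}\psi|^{2}\le C\,|\l |U'|\psi, g\r|$.

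To close, I would bound the right-hand side by testing the equation for $\psi^{(2)}$ with $-|U'|\psi$: integrating by parts (again the boundary term vanishes since $\psi|_{\de J}=0$) gives
\begin{align}
-\l |U'|\psi, g\r = \l |U'|\nabla_{k,t}\psi, \nabla_{k,t}\psi^{(2)}\r + \l \psi\,\de_{y}|U'|, (\de_{y}-iktU')\psi^{(2)}\r,
\end{align}
and since $\de_{y}|U'|=\sign(U')U''$ with $|U''|\le\eps_{0}|U'|$ by \ref{H2}, the right-hand side is at most $C\int_{J}|U'||\nabla_{k,t}\psi||\nabla_{k,t}\psi^{(2)}|$. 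Combining this with the coercivity estimate and using Young's inequality to absorb $\tfrac12\int_{J}|U'||\nabla_{k,t}\psi|^{2}$ proves the claim.

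The only genuinely new point compared to Lemma \ref{lem:H1elliptic} is bookkeeping: there are more correction terms and the constants $4$, $2$ together with the extra quadratic term are larger, so one must check that the perturbation is still small enough to preserve the sign of the bilinear form. This is exactly what \ref{P} guarantees, the relevant smallness being measured against $\gamma_{0}$ rather than against $k$, so that the estimate is uniform in $k\ge1$. I expect this verification --- rather than any single computation --- to be the main (though routine) obstacle.
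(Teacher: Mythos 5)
Your proposal is correct and takes essentially the same approach as the paper: the paper's own proof is just a two-line reference back to Lemma \ref{lem:H1elliptic}, invoking \eqref{eq:loaef} and then testing the equation for $\psi^{(2)}$ against $-|U'|\psi$, which is precisely the computation you spell out. Your more explicit accounting of the three correction terms (with coefficients $4$, $1$, $2$) and the observation that \ref{P} must still absorb the now-larger constants is exactly the implicit verification the paper leaves to the reader.
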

\begin{proof}
Repeating the steps to obtain \eqref{eq:loaef} in the proof of Lemma \ref{lem:H1elliptic}, we obtain
\begin{align}
\eps \int_J |U'(y)||\nabla_{k,t} \psi(y)|^2\dd y \leq\Re\l \nabla_{k,t} \psi,\nabla_{k,t}(|U'|\psi)\r.
\end{align}
  Conversely, we may estimate
  \begin{align}
    \langle -|U'|\psi, g \rangle = \langle -|U'|\psi, (k^{2}+(\de_{y}-iktU')^{2})\psi^{(2)} \rangle \leq C\int_J |U'(y)||\nabla_{k,t} \psi(y)||\nabla_{k,t} \psi^{(2)}(y)|\dd y.
  \end{align}
  The result hence follows by H\"older's and Young's inequalities.
\end{proof}

\begin{lemma}
  \label{lem:boundaryCorrection2}
  In the setting of Lemma \ref{lem:F2} let $g \in L^{2}$ be any given function
  and let $(\chi_{j}g)_{n}$ denote the Fourier basis expansion of $\chi_{j}g$ on
  the interval $I_{j}=(a_{j},b_{j})$ with respect to
  $z=\frac{U(y)-U(a_{j})}{\min_{I_{j}} |U'|} \in (0,1)$.
  Let further $\tilde{\tilde{h}}$ be as in Lemma \ref{lem:F2} and $l \in
  W^{1,\infty}_{loc}$. Then it holds that
  \begin{align}
    | \langle g, l \e^{ikt U} \tilde{\tilde{h}} \rangle | 
    &\leq \sum_{j,n} \frac{|(\chi_{j}g)_{n}|}{|k|+|n-kt\min U'|} \|\chi_{j}l \tilde{\tilde{h}}\|_{H^{1}} \\
    &\leq C_{\delta} \|l \frac{\tilde{\tilde{h}}}{U'}\|_{H^{1}} \sum_{j,n} \frac{|(\chi_{j}g)_{n}|^{2} \min U'}{(|k|+|n-kt\min U'|)^{1-\delta}}.
  \end{align}
\end{lemma}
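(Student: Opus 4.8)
The plan is to mirror the proof of Lemma~\ref{lem:boundaryCorrection}, since the bilinear expression $\langle g, l\e^{iktU}\tilde{\tilde h}\rangle$ is structurally the same oscillatory integral estimated there: the only differences are that $\tilde h_\bullet$ is replaced by $\tilde{\tilde h}_\bullet$, which again lies in $H^1$ with norm controlled uniformly in $j$ and $t$ by standard elliptic regularity for \eqref{eq:hheqn} once one notes that $|\dc^{(1)}|$ and $|\dc^{(2)}|$ are small by \ref{P}, and that the prefactor $ikB$ is replaced by a general weight $l\in W^{1,\infty}_{loc}$. First I would insert the partition of unity $\sum_{j}\chi_j^2=1$ and, on each interval $I_j=(a_j,b_j)$, expand $\chi_j g$ in the orthonormal Fourier basis $e_n$ adapted to $z=\frac{U(y)-U(a_j)}{c_j}$ with $c_j:=\min_{I_j}|U'|$, exactly as in the derivation of \eqref{eq:ineq1}. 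As in Lemma~\ref{lem:boundaryCorrection}, condition \ref{H3} and the definition of $I_j$ ensure $U'/c_j\approx 1$ on $I_j$ and that the normalizations with respect to $L^2(I_j,\dd y)$ and $L^2(I_j,\frac{U'}{c_j}\dd y)$ are uniformly comparable, so it suffices to bound each summand $\langle e_n, \chi_j l\e^{ikt(U-U(a))}\tilde{\tilde h}\rangle$ and sum against the coefficients $(\chi_j g)_n$.

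For each summand I would integrate by parts using the identity $\de_y\e^{inz-ikt(U(y)-U(a))}=i(n/c_j-kt)U'(y)\,\e^{inz-ikt(U(y)-U(a))}$; no boundary term appears when $a_j\neq a$ because $\chi_j$ is supported in the open interval, while for $a_j=a$ one picks up a term $\frac{l(a)\tilde{\tilde h}(a)}{(n/c_j-kt)U'(a)}$ that obeys the same bound. When the derivative hits $\chi_j l\tilde{\tilde h}$ it produces $\|\de_y(\chi_j l\tilde{\tilde h})\|_{L^2}$, and when it hits $(U')^{-1}$ it produces the factor $U''/(U')^2$, which by \ref{H2} is at most $C/|U'|\le C/c_j$; combined with the Fourier normalization and $|n/c_j-kt|\,c_j=|n-ktc_j|$, this yields the summand bound $\frac{C}{|k|+|n-ktc_j|}\|\chi_j l\tilde{\tilde h}\|_{H^1}$, which we replace by $\frac{C}{|k|+|n-kt\min U'|}\|\chi_j l\tilde{\tilde h}\|_{H^1}$ as in the source lemma. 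Summing against $|(\chi_j g)_n|$ gives the first inequality.

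For the second inequality I would split $\frac{1}{|k|+|n-kt\min U'|}$ as $\frac{1}{(|k|+|n-kt\min U'|)^{(1+\delta)/2}}\cdot\frac{1}{(|k|+|n-kt\min U'|)^{(1-\delta)/2}}$, apply Cauchy--Schwarz in $n$ together with $\sum_{n}(1+|n/k-t\min U'|)^{-(1+\delta)}<C_\delta k$, and then Cauchy--Schwarz in $j$ together with $\sum_{j}\|\chi_j l\tilde{\tilde h}/\min U'\|_{H^1}^2\le C\|l\tilde{\tilde h}/U'\|_{H^1}^2$, which follows because $\chi_j^2$ is a partition of unity with $\sup_j\|\chi_j\|_{W^{1,\infty}}<C$ by \eqref{eq:part1}. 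This produces the claimed bound $C_\delta\|l\tilde{\tilde h}/U'\|_{H^1}\sum_{j,n}\frac{|(\chi_j g)_n|^2\min U'}{(|k|+|n-kt\min U'|)^{1-\delta}}$.

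The only point requiring care — and it is bookkeeping rather than a genuine difficulty — is tracking the interval-dependent normalizations and the rescaled frequency $l_j\approx 2\pi/|I_j|$ (bounded above and, after a harmless further subdivision, away from zero), and checking that the commutator term from differentiating $(U')^{-1}$ is absorbed using only \ref{H2} so that the plain norm $\|\chi_j l\tilde{\tilde h}\|_{H^1}$ — rather than a weighted one — appears. Since all of these ingredients are already in place from Lemma~\ref{lem:boundaryCorrection} and the elliptic bounds for \eqref{eq:hheqn}, the argument is a routine adaptation.
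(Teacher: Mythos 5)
You have reproduced the partition of unity, Fourier expansion, and integration by parts correctly, and your treatment of the Cauchy--Schwarz step for the second inequality matches the paper. However, there is a genuine gap in your derivation of the first inequality: the denominator $|k|+|n-ktc_j|$ does not come out of the integration by parts alone. What the integration by parts produces is a factor $\frac{1}{|n/c_j - kt|\,U'(y)}\lesssim \frac{1}{|n-ktc_j|}$ in front of $\|\chi_j l\tilde{\tilde h}\|_{H^1}$, and this bound is useless (indeed divergent) when $n-kt c_j$ is close to zero. The commutator term $U''/(U')^2$ and condition \ref{H2} only keep that factor under control; they do not introduce a $|k|$ in the denominator, and nothing in the steps you describe does.

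The missing ingredient, which the paper's proof explicitly highlights, is the exponential profile of $\tilde{\tilde{h}}_\bullet$: since it solves \eqref{eq:hheqn} with the smallness in \ref{P}, it decays like $\e^{-|k||y-\bullet|}$ and hence satisfies a Poincar\'e-type bound of the form $\|\chi_j l\tilde{\tilde{h}}\|_{L^2}\lesssim \frac{1}{|k|}\|\chi_j l\tilde{\tilde{h}}\|_{H^1}$. Combining the trivial (no integration by parts) bound with the integration-by-parts bound gives
\begin{align}
  \min\left(\|\chi_j l\tilde{\tilde{h}}\|_{L^2},\ \frac{1}{|n-ktc_j|}\|\chi_j l\tilde{\tilde{h}}\|_{H^1}\right)
  \lesssim \frac{1}{|k|+|n-ktc_j|}\|\chi_j l\tilde{\tilde{h}}\|_{H^1},
\end{align}
which is exactly where the $|k|$ comes from. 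Without invoking this structure, the argument as you wrote it only establishes the weaker denominator $|n-kt\min U'|$, and the chain from the first to the second inequality would also break, since the sum $\sum_n (1+|n/k-t\min U'|)^{-(1+\delta)} < C_\delta k$ relies on having the full $|k|+|n-kt\min U'|$ available. You should state and use this exponential decay (or the equivalent $\|\tilde{\tilde{h}}\|_{L^2}\lesssim |k|^{-1}\|\tilde{\tilde{h}}\|_{H^1}$ estimate) explicitly, as the paper does.
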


\begin{proof}
  This result follows as in Lemma \ref{lem:boundaryCorrection}. We first expand
  the scalar product in terms of $\chi_{j}$ and then expand in a Fourier basis
  on each interval $I_{j}$.
  More precisely, as in Lemma \ref{lem:boundaryCorrection}, we expand with
  respect to
  \begin{align}
    e_{n}(y)= \frac{1}{\|\frac{U'}{c}\|_{L^{2}(I_{j},dy)}} \exp\left( in \frac{U(y)-U(a_{j})}{c} l_{j}\right).
  \end{align}
  In order to simplify notation, we again rescale $n$ by $l_{j}$ and note that
  the normalizing factor provides a correct transformation.
  We note that
  \begin{align}
    \langle \e^{inz}, \e^{iktU}\chi_{j}l \tilde{\tilde{h}} \rangle
  \end{align}
  can be expressed using $\e^{i(n-kt\min U')z}$ by our choice of coordinate $z$.
  A first estimate hence follows by integration by parts.
  However, this is estimate is suboptimal if $n-kt\min U'$ is small.
  In order to improve this estimate, we further use the structure of
  $\tilde{\tilde{h}}$, which decays exponentially like $\e^{-|ky|}$ just like $h$.
\end{proof}

\begin{lemma}
  \label{lem:coefficientsF2}
  Let $U, h_{a},h_{b}$ be as in Lemma \ref{lem:boundaryCorrection2} and
  let $l \in W^{1,\infty}_{loc}$.
Then for any $g \in H^{1}$ and $\bullet \in \{a,b\}$ it holds that
\begin{align}
  |\langle g, l \e^{ikt(U-U(\bullet))} h_{\bullet} \rangle| \leq \left|\frac{gl}{ktU'}\bigg|_{y=\bullet}\right|
+ \sum_{j,n} \frac{|(\chi_{j}g)_{n}| + (|\chi_{j}\dg g)_{m}|}{|k|+|n-kt\min U'|} \|\frac{\chi_{j}l h_{\bullet}}{U'}\|_{H^{2}}.
\end{align}
\end{lemma}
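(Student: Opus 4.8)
The plan is to integrate by parts once in $y$ against the oscillatory factor $\e^{ikt(U(y)-U(\bullet))}$ in order to extract the explicit boundary term, and then to recognize the remaining bulk integral as a combination of the two situations already handled in Lemmas~\ref{lem:boundaryCorrection} and~\ref{lem:boundaryCorrection2}. Throughout I fix $\bullet=\a$, the case $\bullet=\b$ being entirely symmetric, and, as in the proof of Lemma~\ref{lem:boundaryCorrection}, I assume $U'>0$ near $\a$.

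Since $l$ and $h_\a$ are real, $\l g, l\e^{ikt(U-U(\a))}h_\a\r=\int_I g\,l\,h_\a\,\e^{-ikt(U(y)-U(\a))}\,\dd y$. Writing $\e^{-ikt(U-U(\a))}=\tfrac{1}{-iktU'}\de_y\e^{-ikt(U-U(\a))}$ and integrating by parts over $I$, the only surviving boundary term sits at $y=\a$, because $h_\a(\b)=0$; since $h_\a(\a)=1$, it equals $\tfrac{g(\a)l(\a)}{-iktU'(\a)}$, whose modulus is exactly the first term on the right-hand side. What is left is $\int_I\de_y\!\big(\tfrac{g\,l\,h_\a}{iktU'}\big)\e^{-ikt(U-U(\a))}\,\dd y$, which by the product rule and $\de_y g=\dc^{-1}\dg g$ equals, up to sign, the sum of $\mathrm{(I)}:=\l \dg g,\ \tfrac{l\,h_\a}{iktU'\dc}\,\e^{ikt(U-U(\a))}\r$ and $\mathrm{(II)}:=\tfrac{1}{ikt}\l g,\ \de_y\!\big(\tfrac{l\,h_\a}{U'}\big)\,\e^{ikt(U-U(\a))}\r$.

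For $\mathrm{(I)}$: since the intervals $I_j$ are (dyadic) level sets of $U'$ and \ref{H3} holds, the function $\dc$ is comparable to $1$ and $\dc\,U'$ is comparable to $\min_{I_j}|U'|$ on $\spt\chi_j$, uniformly in $j$ (one uses the identities $\de_y(\dc U')=U'\dc^{(1)}$ and $\de_y\dc=\dc^{(1)}-\tfrac{U''}{U'}\dc$ together with \ref{H2} and the boundedness of $\dc^{(1)}$). Thus $\mathrm{(I)}$ is of exactly the form estimated by the second inequality of Lemma~\ref{lem:boundaryCorrection}, with $g$ replaced by $\dg g$ and $l\,h_\a$ in place of $h_\a$; for $|kt|\ge1$ the resulting bound is controlled by $\sum_{j,m}\tfrac{|(\chi_j\dg g)_m|}{|k|+|n-kt\min U'|}\,\|\tfrac{\chi_j l h_\a}{U'}\|_{H^1}$ (one absorbs the factor $|kt\min U'|^{-1}$ produced by that lemma using $|kt|\ge1$ and that $U'$ is comparable to $\min_{I_j}|U'|$ on $\spt\chi_j$), which is at most the $(\chi_j\dg g)$-part of the claim since $\|\cdot\|_{H^1}\le\|\cdot\|_{H^2}$. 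For $\mathrm{(II)}$: the weight $\de_y(\tfrac{l\,h_\a}{U'})$ decays exponentially like $\e^{-|ky|}$, just as $h_\a$ and $\tilde{\tilde{h}}_\a$ do, so $\mathrm{(II)}$ falls under the scope of Lemma~\ref{lem:boundaryCorrection2} with the weight $l\,\tilde{\tilde{h}}$ there replaced by $\tfrac{1}{ikt}\de_y(\tfrac{l\,h_\a}{U'})$; this gives, again for $|kt|\ge1$, $\sum_{j,n}\tfrac{|(\chi_j g)_n|}{|k|+|n-kt\min U'|}\,\|\de_y(\tfrac{\chi_j l h_\a}{U'})\|_{H^1}$, and the last norm is at most $\|\tfrac{\chi_j l h_\a}{U'}\|_{H^2}$. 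Adding the bounds for $\mathrm{(I)}$ and $\mathrm{(II)}$ yields the claimed inequality.

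I expect the bulk of the work to be the bookkeeping of the auxiliary coefficients $U''/U'$, $\de_y\dc=\dc^{(1)}-\tfrac{U''}{U'}\dc$ and $\de_y\dc^{(1)}=\dc^{(2)}-\tfrac{U''}{U'}\dc^{(1)}$ that are produced whenever the weights are differentiated: one has to check, using \ref{H2}, \ref{H3} and the smallness condition \ref{P}, that each of these is controlled by $U'$ (or is bounded), so that $\|\tfrac{\chi_j l h_\a}{U'}\|_{H^2}$ genuinely absorbs them; and one has to check that the boundary terms generated internally inside the two cited lemmas — which appear only when an interval $I_j$ abuts $\p I$, and then come with an extra factor $|kt|^{-1}$ — are genuinely of lower order, exactly as in the proof of Lemma~\ref{lem:boundaryCorrection}.
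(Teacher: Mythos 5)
Your proposal matches the paper's own proof: the paper's argument is exactly to write $\e^{ikt(U-U(\bullet))}=\frac{1}{iktU'}\de_y\e^{ikt(U-U(\bullet))}$, integrate by parts to extract the boundary term at $y=\bullet$ (using $h_\bullet(\bullet)=1$, $h_\bullet$ vanishing at the other endpoint), and then split the remaining bulk integral into $\langle \dg g, \e^{ikt(U-U(\bullet))}\frac{h_\bullet l}{\dc k t U'}\rangle + \langle g, \e^{ikt(U-U(\bullet))}\de_y\frac{h_\bullet l}{ktU'}\rangle$, to which the earlier Fourier-expansion-and-oscillation estimates are applied. The paper merely says ``the result hence follows as in Lemma~\ref{lem:boundaryCorrection2}'' where you spell out the two applications and the bookkeeping with $\dc$, $U'/\min_{I_j}|U'|$, and the $H^2$ norm, but the route and the decomposition are identical.
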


\begin{proof}
  We integrate $\e^{ikt (U(y)-U(\bullet))} = \frac{1}{iktU'}\de_{y}\e^{ikt (U(y)-U(\bullet))}$
  by parts and use that $h_{\bullet}$ is $1$ on the boundary point $y=\bullet$ and vanishes on
  the other.
  The result hence follows as in Lemma \ref{lem:boundaryCorrection2} by
  expressing the resulting inner product as
  \begin{align}
    \langle \dg g, \e^{ikt(U-U(\bullet))} \frac{h_{\bullet}l}{\dc ktU'} \rangle + \langle g, \e^{ikt(U-U(j))} \de_{y} \frac{h_{\bullet}l}{ktU'} \rangle. 
  \end{align}
\end{proof}
We remark that for $g=F^{(1)}$ the boundary evaluation satisfies 
\begin{align}
  \de_t F^{(1)}|_{y=\bullet} =  (\langle F^{(1)}, \frac{\e^{{ikt(U(y)-U(\bullet))}}}{iktU'} \frac{h_{\bullet}}{d} \rangle +\langle F, \e^{{ikt(U(y)-U(\bullet))}}\de_{y}(\frac{h_{\bullet}}{iktU'})  \rangle )ikB |_{y=\bullet} = \mathcal{O}(t^{-1})\|\omega^{in}\|_{H^{1}}
\end{align}
by the preceding results. Hence, we obtain a logarithmic growth bound.

With these preparations, we can establish stability of $\dg F^{(1)}$.
\begin{proposition}
  \label{prop:DF1}
  Let $F^{(1)}$ be as in Lemma \ref{lem:H1equation} and suppose that the
  assumptions of Lemma \ref{lem:boundaryCorrection} hold.
  Let further $A_{1}(t)$ be as in Lemma \ref{lem:modifiedMultiplier}.
  Then there exists constant $C_{1}, C_{2}$ such that for $t \gg 1$ 
  \begin{align}
\ddt  \left(  \langle \dg F^{(1)}, A_{1}(t) \dg F^{(1)} \rangle + C_{1}\langle F^{(1)}, A_{1}(t) F^{(1)} \rangle + C_{2}  \langle F, A_{1}(t) F \rangle \right) \leq Ct^{-1-\delta}
  \end{align}
\end{proposition}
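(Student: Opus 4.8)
The plan is to push the energy argument of Proposition \ref{prop:F1} one derivative higher, applying it to the equation for $\dg F^{(1)}$ from Lemma \ref{lem:F2}, and to close the resulting inequality by carrying along the lower-order Lyapunov functionals $\langle F^{(1)},A_1(t)F^{(1)}\rangle$ and $\langle F,A_1(t)F\rangle$ with suitably large weights $C_1$ and $C_2$. Here $A_1(t)=\sum_j\chi_j A^{(1)}_j(t)\chi_j$ with $A^{(1)}_j$ as in Lemma \ref{lem:modifiedMultiplier}, so that $-\langle g,\dot A_1(t)g\rangle$ bounds from below $\|g\|_{H^{-1}_t}^2$ together with the weighted boundary-type sum in \eqref{eq:signAdot}; moreover $A_1(t)$ is comparable to the Bilipschitz multiplier of Lemma \ref{lem:Bilipschitz}, so the reasoning of Theorem \ref{thm:L2} and Proposition \ref{prop:F1} applies to $\langle F,A_1 F\rangle$ and $\langle F^{(1)},A_1 F^{(1)}\rangle$, yielding $\ddt\langle F,A_1 F\rangle\leq c\langle F,\dot A_1 F\rangle$ and $\ddt\langle F^{(1)},A_1 F^{(1)}\rangle\leq c\langle F^{(1)},\dot A_1 F^{(1)}\rangle+Ct^{-1-\delta}$ for $t\gg1$.

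First I would differentiate, using Lemma \ref{lem:F2} and the symmetry of $A_1$, to obtain
\[
\ddt\langle\dg F^{(1)},A_1(t)\dg F^{(1)}\rangle=\langle\dg F^{(1)},\dot A_1(t)\dg F^{(1)}\rangle+2\Re\langle\mathcal R,A_1(t)\dg F^{(1)}\rangle,
\]
where $\mathcal R$ is the right-hand side in Lemma \ref{lem:F2}; I split $\mathcal R$ into the main term $ikB\ttPhi$, the interior lower-order terms $ik(\dg B)\tPsi+ik\dg[(\dg B)\Psi]$, the boundary lines carrying $h_\bullet,\tilde{h}_\bullet$, and the last line carrying $\tilde{\tilde{h}}_\bullet$ and $\dg\tPsi|_{y=\bullet}$. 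For the main term I localise exactly as in the proof of Proposition \ref{prop:F1}: I introduce localised potentials $\Psi^{(2)}_{j,A}$ solving $(-k^2+(\de_y-iktU')^2)\Psi^{(2)}_{j,A}=A^{(1)}_j(t)\chi_j\dg F^{(1)}$ with zero Dirichlet data on $I_j$, so that $A_1(t)\dg F^{(1)}=\sum_j\chi_j(-k^2+(\de_y-iktU')^2)\Psi^{(2)}_{j,A}$, integrate by parts against $\Psi^{(2)}_{j,A}$, and use that by \ref{P} the modified elliptic operator of Lemma \ref{lem:F2} differs from the standard one only by terms treated as errors, as in Lemmas \ref{lem:H1elliptic} and \ref{lem:H2elliptic}. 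Since $|B|\leq\eps_0|U'|$, and the right-hand side $\dg R$ of the $\ttPhi$-equation equals $\dg F^{(1)}$ plus terms with coefficients $\lesssim\eps_0|U'|$ acting on $\nabla_{k,t}\Psi$ and $\nabla_{k,t}\tPsi$, applying Lemma \ref{lem:H2elliptic}, Lemma \ref{lem:H1elliptic}, Lemma \ref{lem:local_elliptic} (equivalently the localised Bilipschitz estimates of Lemma \ref{lem:Bilipschitz}) bounds this contribution by an $\eps_0$-small multiple of $\|\dg F^{(1)}\|_{H^{-1}_t}^2$ — absorbed into $-\langle\dg F^{(1)},\dot A_1\dg F^{(1)}\rangle$ by \ref{P} — plus quantities controlled by $-\langle F^{(1)},\dot A_1 F^{(1)}\rangle$ and $-\langle F,\dot A_1 F\rangle$ (hence pushed into the $C_1,C_2$ functionals), plus $C\int_I|U'||\nabla_{k,t}\Psi|^2\dd y$. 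This last integral equals $C\int_I|U'||\nabla_k\psi|^2\dd y$, which by the $H^1$-level inviscid damping estimate — a consequence of $\|\dg F\|_{L^2}\leq C\|\omega^{in}\|_{H^1}$ (from Proposition \ref{prop:F1} and Lemma \ref{lem:splitting}) together with the weighted elliptic estimates, cf.\ Theorem \ref{thm:L2} — is $\leq C\langle kt\rangle^{-2}\|\omega^{in}\|_{H^1}^2\leq Ct^{-1-\delta}$.

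The interior lower-order terms are handled identically, using $|\dg B|\leq C\eps_0|U'|$ (by \ref{H2}--\ref{H3}), the splitting \eqref{eq:homoline} to reduce $\dg\Psi$ to $\tPhi$ plus boundary pieces, and, for $\dg[(\dg B)\Psi]=(\dg^2 B)\Psi+(\dg B)\dg\Psi$, the bound $|\dg^2 B|\leq C\eps_0|U'|$ valid for the mildly degenerate flows under consideration. For the boundary corrections I invoke Lemmas \ref{lem:boundaryCorrection}, \ref{lem:boundaryCorrection2} and \ref{lem:coefficientsF2}: each contribution carries a factor $|ktU'(\bullet)|^{-1}$ or $|kt|^{-1}$, and writing $|kt\min U'|^{-1}=|kt|^{-\delta}|\min U'|^{-\delta}|kt\min U'|^{-(1-\delta)}$ the factor $|kt|^{-\delta}$ is small for $t\gg1$ while the remaining weight matches the second term of \eqref{eq:signAdot}, so these terms are absorbed into $-\langle\dg F^{(1)},\dot A_1\dg F^{(1)}\rangle$ (up to the $C_1,C_2$ reallocations). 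The residual inhomogeneous pieces involve the boundary evaluations $\omega^{in}|_\bullet$ (constant), $F|_\bullet$ — at most logarithmically growing, since $\de_t F^{(1)}|_\bullet=\mathcal{O}(t^{-1})\|\omega^{in}\|_{H^1}$ by the remark following Lemma \ref{lem:coefficientsF2} — and $\dg\tPsi|_{y=\bullet}$, which I express via integration by parts against $\tilde{\tilde{h}}_\bullet$ (as in Lemma \ref{lem:H1equation}) in terms of $\langle F^{(1)},\cdot\rangle$ and $\langle\dg F^{(1)},\cdot\rangle$ controlled by Lemma \ref{lem:coefficientsF2}; together with the $|tU'(\bullet)|^{-1}$ prefactors these give a net $Ct^{-1-\delta}$ (absorbing the logarithm into the exponent) plus terms taken into the $\dot A_1$ functionals by Young's inequality.

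Collecting everything, one chooses first $C_2$ large enough that the $F$-dependent errors above are dominated by $-C_2\langle F,\dot A_1 F\rangle$, then $C_1$ large enough to dominate the $F^{(1)}$-dependent ones by $-C_1\langle F^{(1)},\dot A_1 F^{(1)}\rangle$, the diagonal $\dg F^{(1)}$-error being already small by \ref{P}; combined with the two lower-order differential inequalities quoted above, this yields, for $t\gg1$,
\[
\ddt\Big(\langle\dg F^{(1)},A_1\dg F^{(1)}\rangle+C_1\langle F^{(1)},A_1 F^{(1)}\rangle+C_2\langle F,A_1 F\rangle\Big)\leq Ct^{-1-\delta},
\]
which is the claim. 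The main obstacle is precisely this recursive closure: the $\ttPhi$-equation feeds on $\tPsi$ and $\Psi$, so the $\dg F^{(1)}$-energy cannot be closed in isolation, and one must separate the \emph{diagonal} errors (absorbable via \ref{P}) from those that must be pushed down to the $F^{(1)}$- and $F$-levels, forcing the ordering $C_2\gg C_1\gg1$; a secondary difficulty is that several boundary error terms are small only for $t\gg1$, so the statement is genuinely asymptotic, the initial time interval being handled separately by Gronwall's inequality as in Proposition \ref{prop:F1}.
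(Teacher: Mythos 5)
Your proposal follows essentially the same approach as the paper's proof: the same decomposition into contributions from $\ttPhi$, $\tPhi$, $\Phi$ and from the homogeneous corrections $h_\bullet,\tilde h_\bullet,\tilde{\tilde h}_\bullet$; the same use of Lemma \ref{lem:H2elliptic} to reduce the modified elliptic operator to the standard one; the same invocation of Lemmas \ref{lem:boundaryCorrection}--\ref{lem:coefficientsF2} for boundary terms; and the same hierarchical absorption into $-\langle\cdot,\dot A_1(t)\cdot\rangle$ with $C_2\gg C_1\gg1$. One small detour worth flagging: to close the estimate for the leftover $\int_I|U'||\nabla_{k,t}\Psi|^2\,\dd y$ you invoke the $H^1$-level inviscid damping decay $\int|U'||\nabla_k\psi|^2\lesssim\langle kt\rangle^{-2}\|\omega^{in}\|_{H^1}^2$, whereas the paper simply recognizes that $\int|U'||\nabla_{k,t}\Psi|^2\approx\|F\|_{H^{-1}_t}^2$ is dominated by $-C_2\langle F,\dot A_1 F\rangle$ for $C_2$ large; your route is not circular (the damping estimate depends only on Proposition \ref{prop:F1} and the mechanism of Lemma \ref{lem:damp}, not on this proposition), but it is an unnecessary appeal to a result the paper states in a later subsection, and the cleaner absorption argument is available. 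A minor imprecision is the phrase ``absorbing the logarithm into the exponent'': since $\frac{(\log t)^2}{t^{1+\delta}}$ is not bounded by $Ct^{-1-\delta}$ for the \emph{same} $\delta$, one has to shrink $\delta$ slightly after Young's inequality; this is harmless because $\delta$ in Lemma \ref{lem:modifiedMultiplier} is arbitrary, but it should be said explicitly.
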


\begin{proof}
Following a similar strategy as in the proof of Proposition \ref{prop:F1}, we
separately estimate the contributions in
\begin{align}
  \langle \de_t \dg F^{(1)}, A_{1}(t) \dg F^{(1)} \rangle
\end{align}
involving $\ttPhi, \tPhi$ and $\Phi$ and the contributions due to
homogeneous corrections $h, \tilde{h}$ and $\tilde{\tilde{h}}$.
For the latter terms, we rely on Lemmas
\ref{lem:coefficientsF2}, \ref{lem:boundaryCorrection2}
and \ref{lem:boundaryCorrection}.
Using the properties of $\dot A_{1}(t)$ established in Lemma
\ref{lem:modifiedMultiplier} and Young's inequality, these contributions can be
absorbed in
\begin{align}
   \langle \dg F^{(1)}, \dot A_{1}(t) \dg F^{(1)} \rangle + C_{1}\langle F^{(1)}, \dot A_{1}(t) F^{(1)} \rangle + C_{2}  \langle F, \dot A_{1}(t) F \rangle,
\end{align}
provided $C_{1}, C_{2}$ are sufficiently big and $t \gg 1$.

In order to control the contributions due to $\Phi^{(2)}, \Phi^{(1)}$ and
$\Phi$,
we first note that contributions due to $\Phi^{(1)}$ and $\Phi$ can be estimated
as in Section \ref{sec:higher}, where we use control of $\dg B$ and $\dg ^{2}B$ and
\eqref{eq:hhomoline}.
It thus only remains to control $\Phi^{(2)}$.
Here, Lemma \ref{lem:H2elliptic} allows us to reduce to our previous elliptic
operator.
In the notation of Lemma \ref{lem:F2}, we integrate $(\de_{y}-iktU')$ terms on
the right-hand-side by parts and use Young's inequality to control in terms of
$\|\Phi^{(1)}\|_{H^{1}_{t}(U')}, \|\Phi^{(1)}\|_{H^{1}_{t}(U')}$ and the stream
function corresponding to a right-hand-side $\dg F^{(1)}$.
Using again the reduction of Lemma \ref{lem:H2elliptic} and the estimates
established in the preceeding sections, these estimates are then localized and
our weights $A_{1}(t)$ were constructed in just such a way that the localized
streamfunction contributions can be absorbed.
\end{proof}

It hence remains to control $\dg\beta_{\bullet}$. Here, as in \cite{Zill5} and in
\cite{Zill6}, we split off a boundary layer $\nu$ that asymptotically diverges in
unweighted $L^{2}$ and more well-behaved part $\gamma$.
Compared to previous works, one additional challenge here is that the degeneracy
of this layer also depend on $\frac{1}{U'}$ and thus an understanding of
stability in weighted spaces is necessary.

\subsection{Splitting of  \texorpdfstring{$\dg \beta_\bullet$}{D\beta}}
\label{sec:splitting_beta}
When computing the equation satisfied by $\dg\beta$, we note that many terms are
quite similar to the ones appearing in the equation of $\dg F^{(1)}$ or $\beta$
itself.
However, as it is clear from the last term of the right-hand side of \eqref{eq:beta}, there is also is a contribution by
\begin{align}
 \frac{\dc\omega^{in}}{tU'}\bigg|_{y=\bullet} B\dg(\e^{{ikt(U(y)-U(\bullet))}})\tilde{h}_{\bullet}=
  \frac{\dc\omega^{in}}{U'}\bigg|_{y=\bullet} ikBU'\e^{{ikt(U(y)-U(\bullet))}}\tilde{h}_{\bullet},
\end{align}
which does not exhibit sufficient decay and oscillation in time to be an
integrable contribution.
Hence, we split this inhomogeneity off as a separate boundary layer.
Unlike in \cite{Zill6} the solution operator of the homogeneous solution operator
not only involves several contributions due to different homogeneous corrections
$h,\tilde{h}$ and $\tilde{\tilde{h}}$, but also a modified elliptic operator.
Hence, in a Duhamel approach we have to take care to control these various
corrections and further have establish conditional higher regularity results
which are used as estimates inside Duhamel's formula.

We begin by introducing a splitting. Subsequently, we develop bounds on the
solution operator for the linear propagator and estimates on a Duhamel-type
integral in weighted Sobolev spaces. Similarly to what we did in \eqref{eq:homo1}-\eqref{eq:homo4},
we introduce the auxiliary functions $\tilde{h}^\star_\bullet$, with $\bullet=\a,\b$, solutions to
\begin{align}
&\left(-k^{2}+\de_{yy} +2\frac{\dc^{(1)}}{\dc}\de_{y}  -\frac{\dc^{(2)}}{\dc} \right)\tilde{h}^\star_\bullet=0,\label{eq:hstaromo1} 
\end{align}
with boundary conditions
\begin{align}
\tilde{h}^\star_{\a}(\a)=\tilde{h}^\star_{\b}(\b)=1, \qquad \tilde{h}^\star_{\a}(\b)=\tilde{h}^\star_{\b}(\a)=0. \label{eq:boundhstar}
\end{align} 
In particular,
\begin{align}
\left(-k^{2}+(\de_{y}-iktU')^{2} +2\frac{\dc^{(1)}}{\dc} (\de_{y}-iktU') -\frac{\dc^{(2)}}{\dc} \right)\e^{iktU}\tilde{h}^\star_\bullet=0.\label{eq:hstaromo3} 
 \end{align}
We then have the following formulation for $\dg \beta_\bullet$.
\begin{lemma}\label{lem:Db_splitting}
  Let $\beta_{\bullet}$ be as in Lemma \ref{lem:splitting}, then we may
  decompose $\dg \beta_{\bullet}= \gamma_{\bullet}+\nu_{\bullet}$. These are
  well-behaved part and a more singular boundary layer, respectively, and solve
\begin{align}
&\de_{t}  \gamma_\bullet = ikB \ttPsi_\bullet +ik (\dg B) \tPsi_\bullet \notag\\
&\quad+ B \left(\frac{ \beta_\a}{ \dc tU'}\bigg|_{y=\a} 
+\frac{1}{\dc(\a)}\l \frac{\tilde{h}^\star_\a}{\dc tU'}  \e^{ikt(U(y)-U(\a))}, \gamma_\a \r
+\frac{1}{\dc(\a)}\l \de_y\left(\frac{\tilde{h}^\star_\a}{\dc tU'}\right)  \e^{ikt(U(y)-U(\a))}, \beta_\a \r\right) \e^{{ikt(U(y)-U(\a))}}\tilde{\tilde{h}}_{\a} \notag\\
&\quad+B \left(\frac{ \beta_\b}{ \dc tU'}\bigg|_{y=\b} 
+\frac{1}{\dc(\b)}\l \frac{\tilde{h}^\star_\b}{\dc tU'}  \e^{ikt(U(y)-U(\b))},\gamma_\b \r
+\frac{1}{\dc(\b)}\l \de_y\left(\frac{\tilde{h}^\star_\b}{\dc tU'}\right)  \e^{ikt(U(y)-U(\b))}, \beta_\b \r\right)    \e^{{ikt(U(y)-U(\b))}}\tilde{\tilde{h}}_{\b}\notag\\
&\quad- \dc(\a)\left(\frac{ \beta_\bullet}{ik d(tU')^2}\bigg|_{y=\a}   +\l \gamma_\bullet ,\frac{h_\a}{ik (\dc tU')^2}  \e^{ikt(U(y)-U(\a))} \r
+\l  \beta_\bullet,\de_y\left(\frac{h_\a}{ik \dc (tU')^2}  \right)\e^{ikt(U(y)-U(\a))}\r \right) \notag\\ 
&\qquad\qquad\times\dg (B\e^{{ikt(U(y)-U(\a))}}\tilde{h}_{\a})\notag \\
&\quad-  \dc(\b)\left(\frac{ \beta_\bullet}{ik d(tU')^2}\bigg|_{y=\b}   -\l \gamma_\bullet ,\frac{h_\b}{ik (\dc tU')^2}  \e^{ikt(U(y)-U(\b))} \r
-\l  \beta_\bullet,\de_y\left(\frac{h_\b}{ik \dc (tU')^2}  \right)\e^{ikt(U(y)-U(\b))}\r  \right)  \notag\\
&\qquad\qquad\times\dg (B\e^{{ikt(U(y)-U(\b))}}\tilde{h}_{\b})\notag\\
&\quad +\frac{\dc\omega^{in}}{tU'}\bigg|_{y=\bullet} \dg (B\tilde{h}_{\bullet})\e^{{ikt(U(y)-U(\bullet))}},
\end{align}
with initial condition $\gamma_{\bullet}|_{t=0}=0$, where
\begin{align}
&\left(-k^{2}+(\de_{y}-iktU')^{2}-4 (\de_{y}-iktU') \frac{\dc^{(1)}}{\dc}-\left(\frac{\dc^{(1)}}{\dc}\right)^2-2\frac{\dc^{(2)}}{d}\right)\ttPsi_\bullet= \gamma_\bullet \notag\\
&\qquad- 2 (\de_{y}-iktU') \frac{U''}{U'}\dc\left(\de_y-iktU'\right)\tPsi_\bullet  
+\left(\frac{U'''}{U'}-2\left(\frac{U''}{U'}\right)^2\right)\dc\left(\de_y-iktU'\right)\tPsi_\bullet\notag\\
&\qquad+2(\de_{y}-iktU')\dc \left(\frac{\dc^{(1)}}{\dc}\right)' \tPsi_\bullet -2\dc^{(1)}\left(\frac{\dc^{(1)}}{\dc}\right)'\tPsi_\bullet+2 \frac{U''}{U'}\dc\left(\de_y-iktU'\right)\frac{\dc^{(1)}}{\dc}\tPsi_\bullet+\dc \left(\frac{\dc^{(2)}}{\dc}\right)'\tPsi_\bullet,
\end{align}
with boundary conditions $\ttPsi_\bullet|_{y=\a,\b}=0$.
The boundary layer $\nu_\bullet$ solves
\begin{align}
\de_{t}  \nu_\bullet &=  ikB \ttpsi + \frac{B }{\dc(\a)}\l \frac{\tilde{h}^\star_\a}{\dc tU'}  \e^{ikt(U(y)-U(\a))},\nu_\a \r
 \e^{{ikt(U(y)-U(\a))}}\tilde{\tilde{h}}_{\a}\notag \\
&\quad+ \frac{B }{\dc(\b)}\l \frac{\tilde{h}^\star_\b}{\dc tU'}  \e^{ikt(U(y)-U(\b))},\nu_\b \r    \e^{{ikt(U(y)-U(\b))}}\tilde{\tilde{h}}_{\b}\notag\\
&\quad- \dc(\a) \l \nu_\bullet,\frac{h_\a}{ik (\dc tU')^2}  \e^{ikt(U(y)-U(\a))} \r   \dg (B\e^{{ikt(U(y)-U(\a))}}\tilde{h}_{\a})\notag \\
&\quad+  \dc(\b) \l \nu_\bullet,\frac{h_\b}{ik (\dc tU')^2}  \e^{ikt(U(y)-U(\b))} \r    \dg (B\e^{{ikt(U(y)-U(\b))}}\tilde{h}_{\b})\notag\\
&\quad+ ik\frac{\dc\omega^{in}}{U'}\bigg|_{y=\bullet} BU'\e^{{ikt(U(y)-U(\bullet))}}\tilde{h}_{\bullet},
\end{align}
with initial condition $\nu_{\bullet}|_{t=0}=0$, where
\begin{align}
&\left(-k^{2}+(\de_{y}-iktU')^{2}-4 (\de_{y}-iktU') \frac{\dc^{(1)}}{\dc}-\left(\frac{\dc^{(1)}}{\dc}\right)^2-2\frac{\dc^{(2)}}{d}\right)\ttpsi_\bullet= \nu_\bullet.
\end{align}
with boundary conditions $\ttpsi_\bullet|_{y=\a,\b}=0$.
\end{lemma}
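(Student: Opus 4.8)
The plan is to follow exactly the bookkeeping scheme already used for Lemmas~\ref{lem:H1equation} and~\ref{lem:F2}: apply $\dg$ to the evolution equation \eqref{eq:beta}, commute $\dg$ through the elliptic problem \eqref{eq:betapsi}, isolate the boundary corrections, and finally split off the one non-decaying contribution. Applying $\dg$ to \eqref{eq:beta} produces $ikB\,\dg\tPsi_\bullet + ik(\dg B)\tPsi_\bullet$ from the first term, terms of the form $\dg(B\e^{ikt(U(y)-U(\bullet))}\tilde h_\bullet)$ together with the (now $\dg$-differentiated) scalar pairings $\langle \beta_\bullet,\cdot\rangle$ from the two non-local corrections, and finally the $\dg$-derivative of the inhomogeneity $\frac{\dc\omega^{in}}{tU'}\big|_{y=\bullet}B\e^{ikt(U(y)-U(\bullet))}\tilde h_\bullet$. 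Since \eqref{eq:betapsi} carries precisely the operator appearing in \eqref{eq:DyF2}, the commutator $[(\de_y-iktU')^2-2(\de_y-iktU')\tfrac{\dc^{(1)}}{\dc}-\tfrac{\dc^{(2)}}{d},\dg]$ acting on $\tPsi_\bullet$ is computed verbatim as in the proof of Lemma~\ref{lem:F2} (using \eqref{eq:comm0} and $[AB,C]=A[B,C]+[A,C]B$), so $\dg\tPsi_\bullet$ obeys the same modified fourth-coefficient elliptic operator, whose homogeneous solutions with the normalization \eqref{eq:boundhh} are exactly the functions $\e^{iktU}\tilde{\tilde h}_\bullet$ of \eqref{eq:hheqn}. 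I would then write $\dg\tPsi_\bullet$ as the homogeneous-Dirichlet solution plus the linear combination $(\dg\tPsi_\bullet|_{y=\a})\e^{ikt(U-U(\a))}\tilde{\tilde h}_\a+(\dg\tPsi_\bullet|_{y=\b})\e^{ikt(U-U(\b))}\tilde{\tilde h}_\b$, mirroring \eqref{eq:hhomoline}.

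The central step is to evaluate the boundary traces $\dg\tPsi_\bullet|_{y=\a},\dg\tPsi_\bullet|_{y=\b}$. Here one tests the equation \eqref{eq:betapsi} for $\tPsi_\bullet$ against $\e^{iktU}\tilde h^\star_\bullet$: the functions $\tilde h^\star_\bullet$ are introduced in \eqref{eq:hstaromo1}-\eqref{eq:hstaromo3} precisely as the homogeneous solutions of the \emph{formal adjoint} of that operator (whence the sign flip on the first-order coefficient), so integrating by parts twice kills the bulk term, and by the normalization \eqref{eq:boundhstar} together with $\tPsi_\bullet|_{\de I}=0$ all boundary contributions vanish except the one proportional to $\de_y\tPsi_\bullet|_{y=\bullet}=\tfrac1d\dg\tPsi_\bullet|_{y=\bullet}$, exactly as in \eqref{eq:boundaryterms}. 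This expresses $\dg\tPsi_\bullet|_{y=\bullet}$ as $\pm\dc(\bullet)\langle\beta_\bullet,\e^{ikt(U-U(\bullet))}\tilde h^\star_\bullet\rangle$. A further integration by parts, writing $\e^{ikt(U-U(\bullet))}=\tfrac{1}{iktU'}\de_y\e^{ikt(U-U(\bullet))}$, converts the $\beta_\bullet$-pairing into a pairing with $\dg\beta_\bullet$ plus a boundary evaluation of $\beta_\bullet$, producing the terms $\langle \gamma_\bullet+\nu_\bullet,\tfrac{\tilde h^\star_\bullet}{\dc tU'}\e^{ikt(U-U(\bullet))}\rangle$, $\langle\beta_\bullet,\de_y(\tfrac{\tilde h^\star_\bullet}{\dc tU'})\e^{ikt(U-U(\bullet))}\rangle$ and $\tfrac{\beta_\bullet}{\dc tU'}\big|_{y=\bullet}$ that appear in the statement. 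The identical maneuver — integrate by parts once in every scalar pairing $\langle\beta_\bullet,\cdot\rangle$ to trade $\e^{iktU}$ for $\tfrac{1}{iktU'}\de_y\e^{iktU}$, then substitute $\dg\beta_\bullet=\gamma_\bullet+\nu_\bullet$ — applied to the two differentiated non-local corrections in \eqref{eq:beta} yields the $\dg(B\e^{iktU}\tilde h_\bullet)$-weighted terms with the pairings $\langle\gamma_\bullet,\tfrac{h_\bullet}{ik(\dc tU')^2}\e^{\cdot}\rangle$, $\langle\beta_\bullet,\de_y(\tfrac{h_\bullet}{ik\dc(tU')^2})\e^{\cdot}\rangle$ and the boundary evaluations $\tfrac{\beta_\bullet}{ikd(tU')^2}\big|_{y=\bullet}$.

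It then remains to isolate the boundary layer. Among the terms coming from $\dg$ of the inhomogeneity, $\dg$ hitting $B\tilde h_\bullet$ gives a contribution with the same structure and decay as terms already present in the $\beta$- or $F^{(1)}$-equation; the only term lacking time decay and oscillation is the one in which $\dg$ differentiates the exponential, which after the cancellation of the $t^{-1}$ weight is $\frac{\dc\omega^{in}}{U'}\big|_{y=\bullet}$ times a multiple of $ikBU'\e^{ikt(U-U(\bullet))}\tilde h_\bullet$. I would therefore \emph{define} $\nu_\bullet$ to be the unique solution of the closed subsystem whose forcing is this non-decaying source together with exactly those boundary and non-local terms that are linear in $\nu_\bullet$ itself (the pairings $\langle\nu_\bullet,\cdot\rangle$ and the term $ikB\ttpsi_\bullet$, where $\ttpsi_\bullet$ solves the modified fourth-coefficient elliptic equation with right-hand side $\nu_\bullet$), and set $\gamma_\bullet:=\dg\beta_\bullet-\nu_\bullet$. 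Subtracting the two equations and using that $\dg\beta_\bullet$ solves the full differentiated equation just derived then yields the stated equation for $\gamma_\bullet$, in which every forcing term is built only from $\tPsi_\bullet$, $\beta_\bullet$, $F$, $\omega^{in}$, and $\dg B$, $\dg^2 B$, hence involves only decaying factors. Existence and uniqueness of the decomposition is immediate: the coefficient functions are bounded by \ref{H2}, \ref{P} and \eqref{eq:part1}, the elliptic problems are uniformly solvable (Lemmas~\ref{lem:H1elliptic} and~\ref{lem:H2elliptic}), so a Duhamel/Picard iteration in $L^2$ on finite time intervals gives the unique pair $(\gamma_\bullet,\nu_\bullet)$. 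I expect the only real difficulty to be organizational — keeping track of which of the many boundary and non-local terms carry $\nu_\bullet$-dependence, so that the $\nu$-subsystem closes and the residual $\gamma$-equation retains only decaying forcing; no new analytic estimate is needed, everything being prepared by Lemmas~\ref{lem:H1equation}, \ref{lem:F2}, \ref{lem:boundaryCorrection}, \ref{lem:boundaryCorrection2} and \ref{lem:coefficientsF2}.
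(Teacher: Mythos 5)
Your proposal follows the paper's proof essentially verbatim: apply $\dg$ to \eqref{eq:beta}, treat $\dg\tPsi_\bullet$ exactly as in Lemma~\ref{lem:F2} by commuting $\dg$ through the modified elliptic operator of \eqref{eq:betapsi} and splitting via \eqref{eq:hhomoline} with $\tilde{\tilde h}_\bullet$, evaluate the boundary traces $\dg\tPsi_\bullet|_{y=\bullet}$ by pairing against $\e^{iktU}\tilde h^\star_\bullet$, integrate each scalar pairing by parts to trade $\e^{iktU}$ for $\frac{1}{iktU'}\de_y\e^{iktU}$, and finally isolate the non-decaying source $ik\frac{\dc\omega^{in}}{U'}\big|_{y=\bullet}BU'\e^{ikt(U-U(\bullet))}\tilde h_\bullet$ that arises when $\dg$ hits the exponential in the inhomogeneity, collecting it together with the terms linear in $\nu_\bullet$ to close the $\nu$-subsystem. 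Your explicit observation that $\tilde h^\star_\bullet$ is a homogeneous solution of the formal adjoint — which is exactly why the bulk term vanishes upon double integration by parts — is a slightly crisper statement of a point the paper treats more implicitly, but the substance and the bookkeeping are the same.
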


\begin{proof}
For $\bullet=a,b$, from \eqref{eq:beta} we directly compute
\begin{align}\label{eq:Dbeta}
\de_{t}  \dg\beta_\bullet &= ikB (\dg \tPsi_\bullet)+ik (\dg B) \tPsi_\bullet+ \dc(\a)\l  \beta_\bullet,\frac{ \e^{ikt(U(y)-U(\a))}}{tU'\dc} h_\a  \r  \dg (B\e^{{ikt(U(y)-U(\a))}}\tilde{h}_{\a})\notag \\
&\quad-  \dc(\b)\l  \beta_\bullet,\frac{ \e^{ikt(U(y)-U(\b))}}{tU'\dc} h_\b  \r    \dg (B\e^{{ikt(U(y)-U(\b))}}\tilde{h}_{\b})
+    \frac{\dc\omega^{in}}{tU'}\bigg|_{y=\bullet} \dg (B\e^{{ikt(U(y)-U(\bullet))}}\tilde{h}_{\bullet}),
\end{align}
The treatment of $\dg\tPsi_\bullet$ is exactly the same as in the proof of Lemma \ref{lem:F2}, with the splitting according to \eqref{eq:hhomoline}.
In order to conclude our proof, we note the following integration by parts results for variable coefficients. Firstly,
using the boundary conditions \eqref{eq:boundh1}-\eqref{eq:boundh2}, we have 
\begin{align}
\l  \beta_\bullet,\frac{ \e^{ikt(U(y)-U(\a))}}{tU'd} h_\a  \r
&=\l  \beta_\bullet,\frac{h_\a}{ik d(tU')^2}  \de_y\e^{ikt(U(y)-U(\a))} \r\notag\\
&=-\frac{ \beta_\bullet}{ik d(tU')^2}\bigg|_{y=\a}   -\l \dg \beta_\bullet,\frac{h_\a}{ik (\dc tU')^2}  \e^{ikt(U(y)-U(\a))} \r\notag\\
&\quad-\l  \beta_\bullet,\de_y\left(\frac{h_\a}{ik \dc (tU')^2}  \right)\e^{ikt(U(y)-U(\a))}\r ,
\end{align}
and, analogously,
\begin{align}
\l  \beta_\bullet,\frac{ \e^{ikt(U(y)-U(\b))}}{tU'd} h_\b  \r&=\frac{ \beta_\bullet}{ik d(tU')^2}\bigg|_{y=\b}   -\l \dg \beta_\bullet,\frac{h_\b}{ik (\dc tU')^2}  \e^{ikt(U(y)-U(\b))} \r\notag\\
&\quad-\l  \beta_\bullet,\de_y\left(\frac{h_\b}{ik \dc (tU')^2}  \right)\e^{ikt(U(y)-U(\b))}\r.
\end{align}
Moreover, as in \eqref{eq:boundaryterms}, we use \eqref{eq:betapsi}, \eqref{eq:hstaromo3}, 
the boundary conditions \eqref{eq:boundhstar} and $\tPsi_\bullet |_{y=\a,\b}=0$ to obtain on the one hand that
\begin{align}
\l \e^{{ikt(U(y)-U(a))}} \tilde{h}^\star_{\a}, \beta_{\a}  \r
&=\l\e^{{ikt(U(y)-U(a))}} \tilde{h}^\star_{\a}, \left(-k^{2}+(\de_{y}-iktU')^{2}-2 (\de_{y}-iktU') \frac{\dc^{(1)}}{\dc}-\frac{\dc^{(2)}}{d}\right) \tPsi_\a  \r\notag\\
&=-\de_y \tPsi |_{y=\a}
\end{align}
and, on the other hand, that
\begin{align}
\l \e^{{ikt(U(y)-U(a))}} \tilde{h}^\star_{\a}, \beta_{\a}  \r&=-\frac{ \beta_\a}{ik tU'}\bigg|_{y=\a} 
-\l \frac{\tilde{h}^\star_\a}{ik\dc tU'}  \e^{ikt(U(y)-U(\a))},\dg \beta_\a \r\notag\\
&\quad-\l \de_y\left(\frac{\tilde{h}^\star_\a}{ik\dc tU'}\right)  \e^{ikt(U(y)-U(\a))}, \beta_\a \r
\end{align}
Hence,
\begin{align}
\dg\tPsi|_{y=\a}&=\frac{ \beta_\a}{ik \dc tU'}\bigg|_{y=\a} 
+\frac{1}{\dc(\a)}\l \frac{\tilde{h}^\star_\a}{ik\dc tU'}  \e^{ikt(U(y)-U(\a))},\dg \beta_\a \r\notag\\
&\quad+\frac{1}{\dc(\a)}\l \de_y\left(\frac{\tilde{h}^\star_\a}{ik\dc tU'}\right)  \e^{ikt(U(y)-U(\a))}, \beta_\a \r
\end{align}
Analogously,
\begin{align}
\dg\tPsi|_{y=\b}&=\frac{ \beta_\b}{ik \dc tU'}\bigg|_{y=\b} 
+\frac{1}{\dc(\b)}\l \frac{\tilde{h}^\star_\b}{ik\dc tU'}  \e^{ikt(U(y)-U(\b))},\dg \beta_\b \r\notag\\
&\quad+\frac{1}{\dc(\b)}\l \de_y\left(\frac{\tilde{h}^\star_\b}{ik\dc tU'}\right)  \e^{ikt(U(y)-U(\b))}, \beta_\b \r
\end{align}
By restricting to the boundary the evolution equation of $\beta_\bullet$ in \eqref{eq:beta} and using that $\tPsi_\bullet$ vanishes, 
we obtain
\begin{align}
\de_{t}  \beta_\bullet|_{y=a,b}  &=B \dc(\a)\l  \beta_\bullet,\frac{ \e^{ikt(U(y)-U(\a))}}{tU'd} h_\a  \r  \e^{{ikt(U(y)-U(\a))}}\tilde{h}_{\a}|_{y=a,b}\notag \\
&\quad-  B\dc(\b)\l  \beta_\bullet,\frac{ \e^{ikt(U(y)-U(\b))}}{tU'd} h_\b  \r    \e^{{ikt(U(y)-U(\b))}}\tilde{h}_{\b}|_{y=a,b}
+    \frac{\dc\omega^{in}}{tU'}\bigg|_{y=\bullet} B\e^{{ikt(U(y)-U(\bullet))}}\tilde{h}_{\bullet}|_{y=a,b},
\end{align}
As the homogeneous solutions $\e^{{ikt(U(y)-U(\bullet))}} \tilde{h}_{\bullet}$ are
chosen with boundary values $0$ and $1$ and $\|\beta_\bullet(t)\|_{L^{2}}$ is uniformly
bounded by Proposition \ref{prop:F1}, we may restrict the evolution equation of
$\beta_{\bullet}$ in Lemma \ref{lem:splitting} to the boundary and obtain that
\begin{align*}
  |\dt \beta_{\bullet}|_{\partial I}| \leq \frac{C}{t} |B d|_{\partial I}| (\|\beta_{\bullet}\|_{L^{2}} + |\omega_{in}|_{\partial I}|) \leq \frac{C}{t}. 
\end{align*}
Integrating in time (using Gronwall's lemma for small times), it follows that
\begin{align}
  |\beta_{\bullet}||_{y=a,b} \leq C \log(2+t) |\omega^{in}|_{y=\bullet}|
\end{align}
as $t$ tends to infinity.

Thus, the first term in the expansion of $\dg\tPsi|_{y=\bullet}$ satisfies
\begin{align}
 \left| \frac{ \beta_\a}{ik \dc tU'}\bigg|_{y=\bullet} \right|\leq C \frac{\log(2+t)}{t} |\omega^{in}|_{y=\bullet}|.
\end{align}
Since this term is square integrable, even after multiplication by $t^{\delta}$,
we consider it a given lower order contribution. 
Having identified the terms linear in $\dg\beta_\bullet$ above, we next split 
$\dg\beta_\bullet= \gamma_\bullet+\nu_\bullet$, with $\nu_\bullet$ having inhomogeneity 
\begin{align}
  \label{eq:1}
ik\frac{\dc\omega^{in}}{U'}\bigg|_{y=\bullet} BU'\e^{{ikt(U(y)-U(\bullet))}}\tilde{h}_{\bullet}
\end{align}  
and $\gamma_\bullet$ incorporating all other inhomogeneities, which establishes
the result.
\end{proof}

The just introduced splitting allows us to separately study the possible
growth due to the contribution \eqref{eq:1}. As this term does not possess time
decay of the absolute value and does not oscillate near the boundary, stability
in unweighted spaces does not hold for this term. Instead, we establish
stability in a weighted space using the simplified dependence compared to $\dg
\beta$ in a Duhamel's formula based approach. 

\subsection{Stability of  \texorpdfstring{$\gamma_\bullet$}{\gamma}}\label{sec:gamma}

In the preceding splitting, we have determined the contribution to \eqref{eq:1}
as the potentially most difficult to control. The following proposition
complements this understanding by showing that $\gamma_{\bullet}$, which evolves
with this source of instability removed, enjoys similar stability estimates as $F^{(2)}$.
\begin{proposition}
  \label{prop:gamma}
  Under the same assumptions as in Proposition \ref{prop:DF1} and with $A^{(1)}$
  as in Lemma \ref{lem:modifiedMultiplier}, there exists constants $C_{1},
  C_{2}, C_{3}>0$ such that for $t\gg 1$
  \begin{align}
    \de_t  \left(  \langle \gamma_{\bullet}, A_{1}(t) \gamma_{\bullet} \rangle + C_{1}\langle F^{(1)}, A_{1}(t) F^{(1)} \rangle + C_{2}  \langle F(t), A_{1}(t) F(t) \rangle  +C_{3}\langle \beta_{\bullet}, A_{1}(t) \beta_{\bullet} \rangle  \right) \leq Ct^{-1-\delta}
  \end{align}
\end{proposition}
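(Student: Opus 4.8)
The plan is to repeat the weighted-energy scheme of Propositions \ref{prop:F1} and \ref{prop:DF1}, now for the $\gamma_{\bullet}$-equation of Lemma \ref{lem:Db_splitting}. First I would differentiate $\langle \gamma_{\bullet}, A^{(1)}(t)\gamma_{\bullet}\rangle$ in time, insert the evolution equation, and isolate the good term $\langle \gamma_{\bullet}, \dot A^{(1)}(t)\gamma_{\bullet}\rangle$ (negative, with the quantitative gain \eqref{eq:signAdot}), leaving error terms that I would sort into three families: (i) \emph{streamfunction contributions} built from $\ttPsi_{\bullet}$ and, through its defining equation, from $\tPsi_{\bullet}$ and $\Psi_{\bullet}$; (ii) \emph{boundary corrections} built from the homogeneous solutions $h_{\bullet},\tilde h_{\bullet},\tilde{\tilde h}_{\bullet},\tilde h^{\star}_{\bullet}$, each paired with a scalar product of $\gamma_{\bullet}$ or $\beta_{\bullet}$ weighted by $\tfrac{1}{tU'\dc}$ or $\tfrac{1}{(tU')^{2}}$; and (iii) the genuine \emph{source terms} $\tfrac{\dc\omega^{in}}{tU'}\big|_{y=\bullet}\,\dg(B\tilde h_{\bullet})\,\e^{ikt(U-U(\bullet))}$ and $\tfrac{\beta_{\bullet}}{ik\dc tU'}\big|_{y=\bullet}$. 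Since $\ddt\langle g,A^{(1)}(t)g\rangle\le c\langle g,\dot A^{(1)}(t)g\rangle+Ct^{-1-\delta}$ for $g=F^{(1)},F,\beta_{\bullet}$ by Proposition \ref{prop:F1}, Theorem \ref{thm:L2} and the machinery of Sections \ref{sec:simple_case}--\ref{sec:higher}, it then suffices to absorb families (i)--(iii) into $\langle\gamma_{\bullet},\dot A^{(1)}\gamma_{\bullet}\rangle$ and into $C_{1}\langle F^{(1)},\dot A^{(1)}F^{(1)}\rangle$, $C_{2}\langle F,\dot A^{(1)}F\rangle$, $C_{3}\langle\beta_{\bullet},\dot A^{(1)}\beta_{\bullet}\rangle$ up to an $O(t^{-1-\delta})$ remainder, for $C_{1},C_{2},C_{3}$ large and $t\gg1$ --- which is precisely the form of the claimed inequality.

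For family (i) I would introduce localized potentials $\Psi^{(2)}_{\bullet,A,j}$ solving $(-k^{2}+(\de_{y}-iktU')^{2})\Psi^{(2)}_{\bullet,A,j}=A_{j}(t)\chi_{j}\gamma_{\bullet}$ on $I_{j}$ with zero boundary data, use the partition of unity to write $A^{(1)}(t)\gamma_{\bullet}=\sum_{j}\chi_{j}(-k^{2}+(\de_{y}-iktU')^{2})\Psi^{(2)}_{\bullet,A,j}$, and integrate by parts. Lemma \ref{lem:H2elliptic} reduces the modified elliptic operator governing $\ttPsi_{\bullet}$ to the standard one, and Lemma \ref{lem:H1elliptic} does the same for $\tPsi_{\bullet}$; the localized elliptic estimates of Section \ref{sec:simple_case} then express everything as sums of localized weighted energies, and integrating the $(\de_{y}-iktU')$ factors in the $\ttPhi$-type inhomogeneity (cf.\ \eqref{eq:braulio1}) by parts, Young's inequality together with \ref{P} lets these be absorbed exactly as in Proposition \ref{prop:DF1}. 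For family (ii) I would apply Lemmas \ref{lem:boundaryCorrection}, \ref{lem:boundaryCorrection2} and \ref{lem:coefficientsF2}; as in Proposition \ref{prop:F1}, each estimate carries a factor $|kt\min U'|^{-1}$ (or its square), which I split as $|kt|^{-\sigma}\,|\min U'|^{-\sigma}\,|kt\min U'|^{\sigma-1}$ with $\delta\le\sigma<1$, the first factor giving smallness for $t\gg1$ and the second matching the gain in \eqref{eq:signAdot}, so that Young's inequality absorbs these terms --- including the linear-in-$\gamma_{\bullet}$ self-interaction from the boundary values $\dg\tPsi_{\bullet}|_{y=\bullet}$ --- into $\langle\gamma_{\bullet},\dot A^{(1)}\gamma_{\bullet}\rangle$ and $C_{3}\langle\beta_{\bullet},\dot A^{(1)}\beta_{\bullet}\rangle$. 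On the fixed window $[0,T_{0}]$, where this smallness is unavailable, I would close by Gronwall's lemma (using $\gamma_{\bullet}|_{t=0}=0$), consistent with the statement being for $t\gg1$.

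Family (iii) encodes the new instability. Since $\big|\tfrac{\dc\omega^{in}}{tU'}\big|_{y=\bullet}\big|\lesssim t^{-1}|\omega^{in}|_{y=\bullet}|$, Young's inequality splits the first source term into $Ct^{-1-\sigma}|\omega^{in}|_{y=\bullet}|^{2}$ (an admissible $O(t^{-1-\delta})$ remainder) plus $Ct^{-1+\sigma}\big|\langle A^{(1)}\gamma_{\bullet},\dg(B\tilde h_{\bullet})\e^{ikt(U-U(\bullet))}\rangle\big|^{2}$, and I would control the latter by (a variant of) Lemma \ref{lem:boundaryCorrection2}, using that $\dg(B\tilde h_{\bullet})$ is exponentially localized like $\tilde{\tilde h}$, and absorb it as in family (ii). For the boundary-value term I would use the logarithmic bound $|\beta_{\bullet}|\big|_{\partial I}\le C\log(2+t)\,|\omega^{in}|_{y=\bullet}|$ established inside the proof of Lemma \ref{lem:Db_splitting}, so that $\tfrac{\log(2+t)}{t}$ stays square-integrable after multiplication by $t^{\delta}$ and contributes $O(t^{-1-\delta})$. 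Summing over the finitely many boundary points and choosing $C_{1},C_{2},C_{3}$ large then closes the estimate.

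I expect the main obstacle to be family (i) together with the self-interaction terms: one has to check that, after the double reduction via Lemmas \ref{lem:H2elliptic} and \ref{lem:H1elliptic} and the localization, the resulting localized streamfunction energies are genuinely dominated by $-\langle\gamma_{\bullet},\dot A^{(1)}\gamma_{\bullet}\rangle$ with constants uniform in $k$, $j$ and $\bullet$, and that the recursive dependence on the lower-order energies $\|\tPsi_{\bullet}\|_{H^{1}_{t}(U')}$, $\|\Psi_{\bullet}\|_{H^{1}_{t}(U')}$ (which in turn feed off $F^{(1)}$ and $F$) closes. This is exactly the point where the fine construction of $A^{(1)}$ in Lemma \ref{lem:modifiedMultiplier}, the smallness \ref{P}, and the specific choice of $\dg$ must all fit together; it is also where keeping track of the four distinct homogeneous corrections $h,\tilde h,\tilde{\tilde h},\tilde h^{\star}$ and of the modified elliptic operator makes the bookkeeping heavier than in the previous propositions.
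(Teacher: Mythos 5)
Your proposal follows essentially the same route as the paper's own (much terser) proof: differentiate $\langle\gamma_{\bullet},A^{(1)}(t)\gamma_{\bullet}\rangle$, reduce the modified elliptic operators for $\ttPsi_{\bullet}$ and $\tPsi_{\bullet}$ to the standard ones via Lemmas \ref{lem:H2elliptic} and \ref{lem:H1elliptic}, localize and absorb into the decay furnished by $\dot A^{(1)}$ exactly as in Proposition \ref{prop:DF1}, handle the boundary corrections via Lemmas \ref{lem:boundaryCorrection2} and \ref{lem:coefficientsF2} together with the logarithmic growth bound on $\beta_{\bullet}|_{\partial I}$, and treat the explicit source terms with Young's inequality plus the $t^{-1}$ decay of $\tfrac{\dc\omega^{in}}{tU'}|_{y=\bullet}$; the remaining terms in the Lyapunov functional are controlled by the earlier Propositions \ref{prop:F1} and \ref{prop:DF1}. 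The only difference is that you flesh out the bookkeeping (localized potentials $\Psi^{(2)}_{\bullet,A,j}$, the $|kt\min U'|^{-1}$ splitting, Gronwall on a fixed time window) that the paper leaves implicit by citing Proposition \ref{prop:DF1}.
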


\begin{proof}
  The bound for all but the first term have already been established in the
  preceding sections.
  In order to control
  \begin{align}
    \de_t  \langle \gamma_{\bullet}, A_{1}(t) \gamma_{\bullet} \rangle
  \end{align}
  we proceed as in the proof of Proposition \ref{prop:DF1}.
  That is, terms involving the stream functions $\ttPsi_{\bullet}$ and
  $\tPsi_{\bullet}$ can be estimated in term of the standard elliptic operators
  using Lemma \ref{lem:H2elliptic} and subsequently localized and absorbed in
  the decay of the weights.
  For the boundary corrections, we similarly use Lemmas
  \ref{lem:boundaryCorrection2} and \ref{lem:coefficientsF2} as well as the
  growth bound for boundary evaluations established there. 
\end{proof}

\subsection{Weighted stability of  \texorpdfstring{$\nu_\bullet$}{\nu}}
In order to establish stability, we intend to use that 
\begin{align}
ik \frac{\dc\omega^{in}}{U'}\bigg|_{y=\bullet}\int^{T}_{1}  BU'\e^{{ikt(U(y)-U(\bullet))}}\tilde{h}_{\bullet} \dd t
= \frac{\dc\omega^{in}}{U'}\bigg|_{y=\bullet}   BU'\frac{\e^{ikT(U(y)-U(\bullet))}-\e^{ik(U(y)-U(\bullet))}-1}{(U(y)-U(\bullet)}\tilde{h}_{\bullet}.
\end{align}
Since $\e^{ikT(U(y)-U(\bullet))}-\e^{ik(U(y)-U(\bullet))}$ vanishes for $y=\bullet$, the
singularity  is removed for $T<\infty$, but as $T \rightarrow \infty$ the
$L^{p}$ norms diverge.
However, if we weigh with $\min\{|U(y)-U(\bullet)|,1\}$, this problem is solved and we
obtain uniform bounds. The main difficulty in implementing this heuristic is that in Duhamel's formula
we further have to apply our evolution operator before integrating in time.

\begin{itemize}
\item In order to make use of the oscillation, we want a straight forward
  commutation relation with multiplication by $\e^{ik \tau U}$.
  However, we do not possess an explicit commutator for terms involving
  $\frac{\e^{iktU}}{ikt U'} h, \tilde{h}, \tilde{\tilde{h}}$ due to
  $\frac{1}{iktU'}$ not being given by conjugation.
  Hence, we split our linear propagator and also the solution to make use of the
  better commutator structure of the other terms. 
\item In order to control errors in the time integral we make use of uniform
  estimates on the stream function in terms of higher Sobolev norms. However, as
  also the elliptic operator is modified we obtain further modified operators
  when considering higher regularity.
\end{itemize}

\begin{lemma}
  \label{lem:evolution}
  Under the same assumptions as in Theorem \ref{thm:L2} and with the notation of
  Lemma \ref{lem:H1equation} let $S(t_{2},t_{1}): L^{2} \rightarrow L^{2}$ denote the solution operator of
  \begin{align}
    \de_t u &= ik B \psi, \\
\left(-k^{2}+(\de_{y}-iktU')^{2}-4(\de_{y}-iktU') \frac{\dc^{(1)}}{\dc}-\left(\frac{\dc^{(1)}}{\dc}\right)^2-2\frac{\dc^{(2)}}{d}\right)\psi  &= u \label{eq:psidefi} \\
  \psi |_{y=a,b}&=0
  \end{align}
  Then for any $u_{0} \in L^{2}$ or $H^1$, respectively, and all $t_{2}\geq t_{1}$ it holds that
  \begin{align}
    \|S(t_{2},t_{1})u_{0}\|_{L^{2}} &\leq C \|u_{0}\|_{L^{2}}, \\
    \|S(t_{2},t_{1})u_{0}\|_{H^{1}} &\leq C \|u_{0}\|_{H^{1}}.
  \end{align}
  Furthermore, for any $\tau \in \R$
  \begin{align}
    S(t_{2},t_{1})\e^{ik\tau U} u_{0} = \e^{ik\tau U} S(t_{2}-\tau,t_{1}-\tau) u_{0}.
  \end{align}
\end{lemma}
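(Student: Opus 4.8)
The plan is to obtain the two boundedness estimates by the energy method, reusing the Lyapunov functionals already constructed, and to obtain the conjugation identity by a covariance computation together with uniqueness. For the $L^{2}$ bound I would set $A(t)=\sum_{j}\chi_{j}A_{j}(t)\chi_{j}$ with $A_{j}$ the bilipschitz multipliers of Lemma \ref{lem:Bilipschitz}, and compute $\ddt\langle u,A(t)u\rangle=\Re\langle u,\dot A(t)u\rangle+2\Re\langle ikB\psi,A(t)u\rangle$ using $\de_{t}u=ikB\psi$ and the symmetry of $A$. Writing $A(t)u=\sum_{j}\chi_{j}L_{t}\psi_{j,A}$, where $L_{t}$ denotes the modified elliptic operator in \eqref{eq:psidefi} and $\psi_{j,A}$ the associated localized potential with right-hand side $A_{j}(t)\chi_{j}u$, one integrates by parts and invokes \ref{H2}, the partition bounds \eqref{eq:part1}, and — crucially — Lemma \ref{lem:H2elliptic} to replace $L_{t}$ by the plain operator $-k^{2}+(\de_{y}-iktU')^{2}$; this reduces the forcing to $C\eps_{0}\sum_{j}\int_{I_{j}}|U'|(|\nabla_{k,t}\psi_{j}|^{2}+k^{2}|\nabla_{k,t}\psi_{j,A}|^{2})$ exactly as in Proposition \ref{prop:reduction}. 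Lemma \ref{lem:Bilipschitz} bounds this by $-C\sum_{j}\Re\langle\chi_{j}u,\dot A_{j}(t)\chi_{j}u\rangle$, so that \ref{P} yields $\ddt\langle u,A(t)u\rangle\leq0$; since $\langle u,A(t)u\rangle\approx\|u\|_{L^{2}}^{2}$ with constants independent of $t$, monotonicity gives $\|S(t_{2},t_{1})u_{0}\|_{L^{2}}\leq C\|u_{0}\|_{L^{2}}$ with $C$ independent of $t_{1},t_{2}$.

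For the $H^{1}$ bound I would apply $\dg$ to \eqref{eq:psidefi}. This produces for $\dg u$ an equation of the same structure as in Lemma \ref{lem:F2}, but without the $\omega^{in}$- and $\beta$-type forcings: it is governed by a further-modified elliptic operator, plus commutator terms and boundary corrections expressed through $\langle u,\cdot\rangle$, $\langle\dg u,\cdot\rangle$ and the traces $u|_{\partial I}=u_{0}|_{\partial I}$ (which are preserved since $\de_{t}u|_{\partial I}=ikB\psi|_{\partial I}=0$). Running the energy estimate for $\langle\dg u,A^{(1)}(t)\dg u\rangle+C_{1}\langle u,A^{(1)}(t)u\rangle$, with $A^{(1)}$ the multiplier of Lemma \ref{lem:modifiedMultiplier}, as in Proposition \ref{prop:DF1}, and using the elliptic reductions of Lemmas \ref{lem:H1elliptic} and \ref{lem:H2elliptic} together with the boundary-correction estimates of Lemmas \ref{lem:boundaryCorrection}, \ref{lem:boundaryCorrection2} and \ref{lem:coefficientsF2}, all contributions are absorbed into the coercivity of $-\dot A^{(1)}(t)$ for $t\gg1$, up to a source of size $Ct^{-1-\delta}\|u_{0}\|_{H^{1}}^{2}$. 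Integrating in time (and using Gronwall for small $t$) and recalling that $A^{(1)}(t)\approx\Id$ uniformly in $t$ gives $\|S(t_{2},t_{1})u_{0}\|_{H^{1}}\leq C\|u_{0}\|_{H^{1}}$.

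The conjugation identity follows from covariance and uniqueness. Multiplication by $\e^{ik\tau U}$ satisfies
\begin{align*}
(\de_{y}-iktU')\left(\e^{ik\tau U}g\right)=\e^{ik\tau U}\left(\de_{y}-ik(t-\tau)U'\right)g,
\end{align*}
it commutes with the time-independent multipliers $\dc^{(1)}/\dc$, $\dc^{(2)}/\dc$ and $B$, and it preserves the zero Dirichlet condition; hence the operator $L_{t}$ in \eqref{eq:psidefi} obeys $L_{t}(\e^{ik\tau U}g)=\e^{ik\tau U}L_{t-\tau}g$. Therefore, if $w(s)=S(s,t_{1}-\tau)u_{0}$ has stream function $\psi_{w}(s)$, then $\tilde u(t):=\e^{ik\tau U}w(t-\tau)$ has stream function $\e^{ik\tau U}\psi_{w}(t-\tau)$ and satisfies $\de_{t}\tilde u=\e^{ik\tau U}ikB\psi_{w}(t-\tau)=ikB\left(\e^{ik\tau U}\psi_{w}(t-\tau)\right)$, with $\tilde u(t_{1})=\e^{ik\tau U}u_{0}$; by uniqueness of solutions, $\tilde u(t)=S(t,t_{1})\left(\e^{ik\tau U}u_{0}\right)$, and evaluating at $t=t_{2}$ yields the claim.

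The main obstacle is the $H^{1}$ estimate: one has to verify that differentiating \eqref{eq:psidefi} keeps one inside the scheme — in particular that the doubly-modified elliptic operator still admits a reduction of the type of Lemma \ref{lem:H2elliptic} under \ref{P}, and that the boundary corrections, which carry the logarithmically growing traces of $u$, pair against the weights $A^{(1)}(t)$ to produce only the integrable $t^{-1-\delta}$ source and not genuine growth. All of this parallels Proposition \ref{prop:DF1}, so the difficulty lies in careful verification rather than in new ideas.
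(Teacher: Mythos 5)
Your proposal follows essentially the same route as the paper's proof: an energy functional with the localized multiplier, elliptic reduction via the $H^1$/$H^2$-elliptic comparison lemmas, and boundary-correction absorption into $-\dot A^{(1)}$ for the $L^2$ and $H^1$ bounds, together with covariance plus uniqueness for the conjugation identity. The only cosmetic differences are that the paper runs the $L^2$ step with the multiplier $A^{(1)}$ of Lemma~\ref{lem:modifiedMultiplier} rather than the bilipschitz $A_j$ of Lemma~\ref{lem:Bilipschitz} (both suffice), and that the paper's terse ``time-dependence is given by conjugation with $\e^{iktU}$'' is exactly the covariance-and-uniqueness argument you spell out.
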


\begin{proof}
We divide the proof into four steps.

\medskip

\noindent \underline{$L^{2}$ stability}:
Let $A^{(1)}(t)=\sum_{j} \chi_{j} A^{(1)}_{j}(t)$ with $A_{j}^{(1)}(t)$ as in
 Lemma \ref{lem:modifiedMultiplier} and consider the energy functional
\begin{align}
  \label{eq:5}
  I(t):=\langle u(t), A^{(1)}(t) u(t) \rangle.
\end{align}
Following the same approach as in the proof of Proposition \ref{prop:DF1}, it follows that
\begin{align}
  \ddt I= \langle u, \dot A^{(1)}(t) u \rangle + 2 \Re \langle ikB \psi, A^{(1)}(t)u \rangle 
  \leq \langle u, \dot A^{(1)}(t) u \rangle + \int_I C |U'(y)| (|\nabla \psi(y)|^{2} + |\nabla \psi_{A}(y)|^{2})\dd y,
\end{align}
where 
\begin{align}
(-k^{2}+(\de_{y}-iktU')^{2})\psi_{A}= A u
\end{align} 
and we used that $|B|+|B'| \leq C |U'|$.
Using Lemma \ref{lem:H1elliptic}, we compare $\psi$ with our previous definition
of stream function and using Lemma \ref{lem:local_elliptic} by localized stream
functions.
However, $ A^{(1)}(t)$ is constructed in just such a way that
\begin{align}
  \int |U'| (|\nabla \psi_{j}(y)|^{2}  + |\nabla \psi_{j,A}(y)|^{2})\dd y \leq -C \langle u, \chi_{j}\dot A^{(1)}_{j}(t) \chi_{j} u \rangle.
\end{align}
Thus, under a smallness condition on $C$, and thanks to \eqref{eq:signAdot},
\begin{align}
  \ddt I \leq \eps \langle u, \dot A^{(1)}(t) u\rangle \leq 0
\end{align}
and $L^{2}$ stability of the solution operator $S(t_{2},t_{1})$ follows by
noting that $I(t)$ is a Lyapunov functional comparable to the $L^{2}$ energy.

\medskip

\noindent \underline{$H^{1}$ stability}:
We follow the same approach as in Section \ref{sec:higher} and consider the
equation satisfied by $\dg u$, that is
\begin{align}
&\de_t \dg u= ikB \dg \psi + ik (\dg  B)\psi, \\
&\dg \left(-k^{2}+(\de_{y}-iktU')^{2}-4 (\de_{y}-iktU') \frac{\dc^{(1)}}{\dc}-\left(\frac{\dc^{(1)}}{\dc}\right)^2-2\frac{\dc^{(2)}}{d}\right)\psi  =\dg u.
\end{align}
Computing the commutator in the equation satisfied by $\psi$ as in Proposition \ref{prop:DF1}, we again obtain a
modified elliptic operator and can express the Dirichlet boundary data
$\dg \psi|_{y=\a,]b}$ in terms of testing against homogeneous solutions.
The result hence follows by constructing a Lyapunov functional as in the proof
of Proposition \ref{prop:DF1}.

\medskip

\noindent \underline{Conjugation}:
For the second statement we note that the time-dependence of all coefficient
functions is given by conjugation with $\e^{ikt U}$. Hence further
conjugation with $\e^{ik \tau U}$ corresponds to a time-shift and 
\begin{align}
\e^{-ik  \tau U} S(t_{2},t_{1}) \e^{ik \tau U} u_{0} = S(t_{2}-\tau, t_{1}-\tau) u_{0}.
\end{align}
\end{proof}

Following a similar strategy as in \cite{Zill6}, we use this linear propagator
to obtain a more explicit characterization of $\nu_\bullet$.

\begin{lemma}
  \label{lem:nu_splitting}
  Let $\nu_\bullet$ be as in Lemma \ref{lem:Db_splitting} and let $S(t_{1},t_{2})$ be as
  in Lemma \ref{lem:evolution}.
  We define $\nu^{(1)}_\bullet$ by
    \begin{align}\label{eq:nu1defi}
    \nu^{(1)}_\bullet(t)
    &= ik\frac{\dc\omega^{in}}{U'}\bigg|_{y=\bullet} \int_{0}^{t} S(t,\tau)  BU'\e^{{ik\tau(U(y)-U(\bullet))}}\tilde{h}_{\bullet}\dd \tau\notag\\
    &= ik\frac{\dc\omega^{in}}{U'}\bigg|_{y=\bullet} \int_{0}^{t} \e^{{ik\tau(U(y)-U(\bullet))}}S(t-\tau,0)  BU'\tilde{h}_{\bullet}\dd \tau.
  \end{align}
  Then $\nu^{(2)}_\bullet=\nu_\bullet-\nu^{(1)}_\bullet$ satisfies
\begin{align}\label{eq:formofnu2}
\de_{t}  \nu_\bullet^{(2)} &=  ikB \psi_\bullet + \frac{B }{\dc(\a)}\l \frac{\tilde{h}^\star_\a}{\dc tU'}  \e^{ikt(U(y)-U(\a))},\nu_\a \r
 \e^{{ikt(U(y)-U(\a))}}\tilde{\tilde{h}}_{\a}\notag \\
&\quad+ \frac{B }{\dc(\b)}\l \frac{\tilde{h}^\star_\b}{\dc tU'}  \e^{ikt(U(y)-U(\b))},\nu_\b \r    \e^{{ikt(U(y)-U(\b))}}\tilde{\tilde{h}}_{\b}\notag\\
&\quad- \dc(\a) \l \nu_\bullet,\frac{h_\a}{ik (\dc tU')^2}  \e^{ikt(U(y)-U(\a))} \r   \dg (B\e^{{ikt(U(y)-U(\a))}}\tilde{h}_{\a})\notag \\
&\quad+  \dc(\b) \l \nu_\bullet,\frac{h_\b}{ik (\dc tU')^2}  \e^{ikt(U(y)-U(\b))} \r    \dg (B\e^{{ikt(U(y)-U(\b))}}\tilde{h}_{\b})
\end{align}
with initial condition $\nu_{\bullet}^{(2)} |_{t=0}=0$, where
\begin{align}
&\left(-k^{2}+(\de_{y}-iktU')^{2}-4 (\de_{y}-iktU') \frac{\dc^{(1)}}{\dc}-\left(\frac{\dc^{(1)}}{\dc}\right)^2-2\frac{\dc^{(2)}}{d}\right)\psi_\bullet= \nu_\bullet^{(2)} .
\end{align}
with boundary conditions $\psi_\bullet|_{y=\a,\b}=0$.
\end{lemma}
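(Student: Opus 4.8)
The plan is to obtain \eqref{eq:formofnu2} by pure linearity together with Duhamel's formula and the conjugation identity of Lemma \ref{lem:evolution}; no new analytic estimate is required. The only point needing care is bookkeeping: one should check that the stream function of $\nu^{(1)}_\bullet$, the stream function $\ttpsi_\bullet$ of $\nu_\bullet$, and the stream function $\psi_\bullet$ of $\nu^{(2)}_\bullet$ are all governed by the \emph{same} modified elliptic operator \eqref{eq:psidefi} with homogeneous Dirichlet data, and that every boundary-correction term appearing in the evolution of $\nu_\bullet$ in Lemma \ref{lem:Db_splitting} depends on $\nu_\bullet$ itself (and not on $\nu^{(1)}_\bullet$ separately), so that those terms survive unchanged in \eqref{eq:formofnu2}.

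First I would record the equation satisfied by $\nu^{(1)}_\bullet$. Writing $g_\bullet(\tau):= ik\frac{\dc\omega^{in}}{U'}\big|_{y=\bullet}\, BU'\e^{{ik\tau(U(y)-U(\bullet))}}\tilde{h}_{\bullet}$ for the inhomogeneity \eqref{eq:1}, the definition \eqref{eq:nu1defi} reads $\nu^{(1)}_\bullet(t)=\int_0^t S(t,\tau) g_\bullet(\tau)\,\dd\tau$. By Lemma \ref{lem:evolution}, for each fixed $\tau$ and $v$ the function $u(t):=S(t,\tau)v$ solves $\de_t u = ikB\psi$, with $\psi$ the solution of \eqref{eq:psidefi} with right-hand side $u$ and $\psi|_{y=\a,\b}=0$. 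Differentiating the Duhamel integral in $t$, using $S(t,t)=\Id$, and invoking linearity of \eqref{eq:psidefi} in its right-hand side, one obtains $\de_t\nu^{(1)}_\bullet = g_\bullet(t) + ikB\,\psi^{(1)}_\bullet$, where $\psi^{(1)}_\bullet$ solves \eqref{eq:psidefi} with right-hand side $\nu^{(1)}_\bullet$ and homogeneous Dirichlet data, and $\nu^{(1)}_\bullet|_{t=0}=0$.

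It then remains to subtract this identity from the evolution of $\nu_\bullet$ in Lemma \ref{lem:Db_splitting}, which has the structure $\de_t\nu_\bullet = ikB\,\ttpsi_\bullet + (\text{boundary-correction terms}) + g_\bullet(t)$. The source $g_\bullet(t)$ cancels, the boundary-correction terms are linear in $\nu_\bullet$ and hence unchanged, and setting $\psi_\bullet:=\ttpsi_\bullet-\psi^{(1)}_\bullet$ one has, since $\ttpsi_\bullet$ and $\psi^{(1)}_\bullet$ solve the same operator \eqref{eq:psidefi} with vanishing Dirichlet data, that $\psi_\bullet$ solves that operator with right-hand side $\nu_\bullet-\nu^{(1)}_\bullet=\nu^{(2)}_\bullet$ and $\psi_\bullet|_{y=\a,\b}=0$. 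This is precisely \eqref{eq:formofnu2}, and $\nu^{(2)}_\bullet|_{t=0}=0$ because both $\nu_\bullet$ and $\nu^{(1)}_\bullet$ vanish at $t=0$. Finally, the second representation of $\nu^{(1)}_\bullet$ in \eqref{eq:nu1defi} follows by writing $\e^{{ik\tau(U(y)-U(\bullet))}}=\e^{-ik\tau U(\bullet)}\e^{ik\tau U(y)}$, pulling the scalar $\e^{-ik\tau U(\bullet)}$ through $S(t,\tau)$, and applying the conjugation identity $S(t,\tau)\e^{ik\tau U}=\e^{ik\tau U}S(t-\tau,0)$ of Lemma \ref{lem:evolution}. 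I do not expect a genuine obstacle; the only delicate point is the uniqueness claimed in \eqref{eq:formofnu2}, which is inherited from the well-posedness of the linear system established in Lemma \ref{lem:evolution}.
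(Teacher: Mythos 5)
Your proof is correct and takes essentially the same route as the paper: write down the Duhamel-type equation satisfied by $\nu^{(1)}_\bullet$ and subtract it from the evolution of $\nu_\bullet$. The paper states this in a single line and invokes linearity, whereas you spell out the bookkeeping (cancellation of the source $g_\bullet$, the boundary terms being kept in terms of the full $\nu_\bullet$ so they pass through unchanged, subtraction of the two elliptic problems to define $\psi_\bullet$, and the conjugation identity for the second representation in \eqref{eq:nu1defi}).
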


\begin{proof}
We note that by definition of $S(\cdot , \cdot)$ and using a Duhamel-type formula, $\nu^{(1)}_{\bullet}$ is a solution of

\begin{align}
& \de_t \nu^{(1)}_{\bullet} = ik B \psi+ ik\frac{\dc\omega^{in}}{U'}\bigg|_{y=\bullet}  BU'\e^{{ik\tau(U(y)-U(\bullet))}}\tilde{h}_{\bullet}, \\
& \left(-k^{2}+(\de_{y}-iktU')^{2}-4 (\de_{y}-iktU') \frac{\dc^{(1)}}{\dc}-\left(\frac{\dc^{(1)}}{\dc}\right)^2-2\frac{\dc^{(2)}}{d}\right)\psi  = \nu^{(1)}_{\bullet} \\
& \psi |_{y=a,b}=0
\end{align}
The result hence follows by linearity. 
\end{proof}
The following proposition states stability results for $\nu^{(1)}_{\bullet}$.

\begin{proposition}
  \label{prop:nu}
  Let $\nu^{(1)}_{\bullet}$ be as in Lemma \ref{lem:nu_splitting}.
  Then it holds that
  \begin{align}
    \|\min(\sqrt{|U(y)-U(\bullet)|},1) \nu^{(1)}_{\bullet}(t)\|_{L^{2}} &\leq C  \left|\frac{\dc\omega^{in}}{U'}\bigg|_{y=\bullet}\right| \log(1+t), \label{eq:weightbd1} \\
    \|\min(\sqrt{|U'||U(y)-U(\bullet)|},1) \nu^{(1)}_{\bullet}(t)\|_{L^{2}} &\leq C \left|\frac{\dc\omega^{in}}{U'}\bigg|_{y=\bullet}\right|,\label{eq:weightbd2} \\
    \| \nu^{(1)}_{\bullet}(t)\|_{L^{1}} &\leq C \left|\frac{\dc\omega^{in}}{U'}\bigg|_{y=\bullet}\right| \log(1+t),\label{eq:weightbd3}
  \end{align}
  and unless $\frac{\dc\omega^{in}}{U'}\big|_{y=\bullet}=0$, the unweighted $L^{p}$ norms diverge
  to infinity as $t \rightarrow \infty$.
\end{proposition}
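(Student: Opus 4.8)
We use the shorthand $c_\bullet := \dc\,\omega^{in}\,(U')^{-1}\big|_{y=\bullet}$, $w(y):=U(y)-U(\bullet)$, and $g_\sigma:=S(\sigma,0)\big(BU'\tilde{h}_\bullet\big)$. By \ref{H3} the profile $U$ is strictly monotone on each $I_j$, and since the supports of the $\chi_j$ cover the connected set $I$ the sign of $U'$ is constant, so $w$ vanishes only at $y=\bullet$. Substituting $\sigma=t-\tau$ in the second line of \eqref{eq:nu1defi} gives $\nu^{(1)}_\bullet(t)=ikc_\bullet\,\e^{iktw}\int_0^t\e^{-ik\sigma w}g_\sigma\,\dd\sigma$. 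The plan is to integrate by parts in $\sigma$, writing $\e^{-ik\sigma w}=\tfrac{i}{kw}\de_\sigma\e^{-ik\sigma w}$ and using that $\de_\sigma g_\sigma=ikB\psi_\sigma$, where $\psi_\sigma$ solves \eqref{eq:psidefi} with right-hand side $g_\sigma$ and $\psi_\sigma|_{\de I}=0$. This yields
\begin{align*}
\nu^{(1)}_\bullet(t)=-\frac{c_\bullet}{w}\big(g_t-\e^{iktw}g_0\big)+\frac{c_\bullet}{w}\,\e^{iktw}\int_0^t\e^{-ik\sigma w}\,ikB\psi_\sigma\,\dd\sigma.
\end{align*}
This is legitimate because, by Remark \ref{remark:boundary_data}, the $S$-evolution preserves boundary values: $g_\sigma|_{y=\bullet}=g_0|_{y=\bullet}=B(\bullet)U'(\bullet)$ and $\de_\sigma g_\sigma|_{y=\bullet}=0$ for all $\sigma$, so the full bracket $g_t-\e^{iktw}g_0-\e^{iktw}\int_0^t\e^{-ik\sigma w}ikB\psi_\sigma\,\dd\sigma$ vanishes at the only zero $y=\bullet$ of $w$, and the quotient is an honest function.

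Next I would split off the genuinely growing part. Writing $g_0=g_0(\bullet)+\big(g_0-g_0(\bullet)\big)$ and $g_t-\e^{iktw}g_0=\big(g_t-g_0\big)+\big(1-\e^{iktw}\big)\big(g_0-g_0(\bullet)\big)+\big(1-\e^{iktw}\big)g_0(\bullet)$, the functions $g_t-g_0$ and $(1-\e^{iktw})(g_0-g_0(\bullet))$ both vanish at $y=\bullet$, and since $S$ is bounded on $H^1$ (Lemma \ref{lem:evolution}) and $|1-\e^{iktw}|\le2$, Hardy's inequality on a neighbourhood of $\bullet$ together with $|w|\sim|U'(\bullet)||y-\bullet|$ there gives $\big\|\tfrac1w(g_t-g_0)\big\|_{L^2}+\big\|\tfrac1w(1-\e^{iktw})(g_0-g_0(\bullet))\big\|_{L^2}\le C\|BU'\tilde{h}_\bullet\|_{H^1}$ uniformly in $t$ (and similarly in $L^1$, using $\tfrac1w\to0$ at infinity on semi-infinite intervals). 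For the Duhamel correction I would compare $\psi_\sigma$ with the stream function of the standard elliptic operator via Lemmas \ref{lem:H1elliptic} and \ref{lem:H2elliptic}, use the stability of $S$ to bound $\psi_\sigma$ in the weighted energy space, and exploit that $\psi_\sigma$ vanishes at $\bullet$ with a boundary layer of width $\sim(k\sigma|U'|)^{-1}$, so that $\tfrac1w B\int_0^t\e^{-ik\sigma w}\psi_\sigma\,\dd\sigma$ is bounded uniformly in $t$ (up to at most a logarithm) by the same Hardy-type mechanism. Hence $\nu^{(1)}_\bullet(t)=-c_\bullet g_0(\bullet)\tfrac{1-\e^{iktw}}{w}+ r(t)$ with $\|r(t)\|_{L^2}+\|r(t)\|_{L^1}\le C\|\omega^{in}\|_{H^1}$.

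It then remains to analyse the explicit boundary-layer term $g_0(\bullet)\tfrac{1-\e^{iktw}}{w}$, using $\big|\tfrac{1-\e^{iktw}}{w}\big|\le\min\big(\tfrac{2}{|w|},kt\big)$ and, in the weighted estimates, the sharper $|1-\e^{iktw}|\le\min(2,kt|w|)$. Splitting the region near $\bullet$ into $|w|\lesssim(kt)^{-1}$ and $(kt)^{-1}\lesssim|w|\lesssim1$ and changing variables to $w$: the inner region always contributes $O(1)$; the outer region contributes a factor $\log(1+t)$ in the weighted $L^2$ norm with weight $\min(\sqrt{|w|},1)$ and in $L^1$, whereas the stronger weight $\min(\sqrt{|U'|\,|w|},1)$ absorbs the factor $|w|$ appearing in $|1-\e^{iktw}|\le kt|w|$ and removes the logarithm, yielding \eqref{eq:weightbd1}--\eqref{eq:weightbd3}. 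Computing instead the unweighted $L^p$ norm of the same term shows it is $\gtrsim|c_\bullet||g_0(\bullet)|\,(kt)^{1-1/p}$ for $p>1$ (and $\gtrsim|c_\bullet||g_0(\bullet)|\log t$ for $p=1$), since on the annulus $\{|w|\sim(kt)^{-1}\}$ its modulus is $\sim|c_\bullet||g_0(\bullet)|kt$ while, by the previous paragraph, $r(t)$ stays bounded; as $g_0(\bullet)=B(\bullet)U'(\bullet)$ is nonzero for the flows under consideration, the unweighted norms diverge unless $c_\bullet=0$.

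The main obstacle is the Duhamel correction: making the oscillatory time integral $\int_0^t\e^{-ik\sigma w}ikB\psi_\sigma\,\dd\sigma$ bounded --- uniformly in $y$ near $\bullet$, and up to at most a logarithm --- requires quantitative control of $\psi_\sigma$ (hence of $g_\sigma$) not only in the weighted energy space of Lemma \ref{lem:weighted_elliptic} but also with sufficient spatial regularity near the boundary, with at most controlled growth in $\sigma$; this is exactly the conditional higher-regularity information for the $S$-evolution referred to before Lemma \ref{lem:evolution}, and it must be propagated through the successively modified elliptic operators of Lemmas \ref{lem:H1elliptic}--\ref{lem:H2elliptic}. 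Once the correction is controlled, assembling the pieces above gives the three weighted bounds and the claimed divergence of the unweighted $L^p$ norms.
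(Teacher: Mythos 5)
Your integration by parts in the time variable is exactly the paper's move: after the change of variables $\sigma=t-\tau$ your formula agrees term by term with the paper's \eqref{eq:lasttocontrol} (write $\e^{ik\tau w}=\de_\tau(\e^{ik\tau w}-1)/(ikw)$ instead, so the $\tau=0$ boundary contribution vanishes and you never need the Remark \ref{remark:boundary_data} observation). Your further splitting of the boundary terms via $g_0 = g_0(\bullet)+(g_0-g_0(\bullet))$ and Hardy's inequality is a refinement of the paper's more direct estimate of $\frac{\e^{iktw}-1}{w}BU'\tilde h_\bullet$, which just invokes $\|BU'\tilde h_\bullet\|_{L^2}<\infty$ and the one-dimensional model $\|\frac{\e^{ikty}-1}{y}\|_{L^1}\approx\log(1+t)$; both work. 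The divergence of the unweighted $L^p$ norms from the annulus $\{|w|\sim(kt)^{-1}\}$ is correct and essentially the only place either argument produces growth.

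The gap is the Duhamel correction, and you should not underestimate it. Your sketch is to exploit a ``boundary layer of width $(k\sigma|U'|)^{-1}$'' for $\psi_\sigma$ and a Hardy-type cancellation in $y$, but this is not what makes the $\sigma$-integral converge. The paper's mechanism is temporal, not spatial: Lemma \ref{lem:damp} shows
$\|\p_\tau S(\tau,0)u_0\|_{L^2}\leq C\tau^{-1}\|S(\tau,0)u_0\|_{H^1}$ and
$\|U'\p_\tau S(\tau,0)u_0\|_{L^2}\leq C\tau^{-2}(\|S(\tau,0)u_0\|_{H^1}+\|\min(|U(y)-U(\bullet)|,1)\p_y^2 S(\tau,0)u_0\|_{L^2})$,
and Lemma \ref{lem:higherS} provides the conditional weighted-$H^2$ growth bound $(1+\tau)^\delta$ that makes the second estimate usable. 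It is the resulting $\tau^{-2+\delta}$ integrability of the integrand, combined with the $\sqrt\tau$ growth of $\|\frac{\e^{ik\tau w}-1}{w}\|_{L^2}$, that closes the argument; no amount of pointwise boundary-layer structure in $y$ substitutes for this. Your final paragraph acknowledges the need for higher-regularity control of $g_\sigma$ and $\psi_\sigma$, but you do not articulate the $\tau^{-1}/\tau^{-2}$ decay of $\p_\tau S$ that is the actual driving estimate, nor the bootstrap between the weighted $H^2$ bound for $S$ and the sharper decay rate; without these, the time integral remains uncontrolled. A secondary issue: your assertion that the weight $\min(\sqrt{|U'||w|},1)$ ``absorbs the factor $|w|$ in $|1-\e^{iktw}|\leq kt|w|$ and removes the logarithm'' is not justified by the computation you give — near $\bullet$ the two weights $\sqrt{|w|}$ and $\sqrt{|U'||w|}$ differ by the bounded factor $\sqrt{|U'(\bullet)|}$ and the outer-region integral $\int_{(kt)^{-1}}^1 dw/w$ still appears; the improvement in \eqref{eq:weightbd2} comes from how the weight interacts with the Duhamel piece through Lemma \ref{lem:damp}, which you have not controlled.
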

Before coming to the proof of this proposition, we show how it further implies
stability of $\nu^{(2)}_{\bullet}$ and thus concludes our proof of Theorem \ref{thm:L2}. From \eqref{eq:formofnu2}, the most challenging
term comes from last two pieces, when $\dg$ hits the exponential and therefore deteriorates the decay. We show how handling
this term is possible if some decay is known.
\begin{lemma}
  \label{lem:suboptimal}
  Let $\nu^{(2)}_{\bullet}$ be as in Lemma \ref{lem:nu_splitting} and suppose that for
  some $\beta>0$ it holds that
  \begin{align}
    \left|\l \nu^{(1)}_\bullet,\frac{h_\a}{ t(\dc U')^2}  \e^{ikt(U(y)-U(\a))} \r \right| +
    \left|\l \nu^{(1)}_\bullet,\frac{h_\b}{ t(\dc U')^2}  \e^{ikt(U(y)-U(\b))} \r \right| 
    \leq C \langle t \rangle^{-\beta}.
  \end{align}
  Then it holds that for any $\delta>0$
  \begin{align}
    \|\nu^{(2)}_{\bullet}(t)\|_{L^{2}}^2 \leq  C_{\delta} \|\omega^{in}\|^2_{H^{2}} + C_{\delta}\left|\frac{\dc\omega^{in}}{U'}\bigg|_{y=\bullet}\right|^2 \int_{0}^{t} \langle \tau \rangle^{-2 \beta+\delta} \dd\tau. 
  \end{align}
\end{lemma}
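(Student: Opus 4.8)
The plan is to run a weighted energy estimate for $\nu^{(2)}_{\bullet}$, treating $\bullet=\a$ and $\bullet=\b$ simultaneously, with the modified multiplier $A^{(1)}(t)=\sum_{j}\chi_{j}A^{(1)}_{j}(t)\chi_{j}$ of Lemma~\ref{lem:modifiedMultiplier}. Set $E(t):=\sum_{\bullet\in\{\a,\b\}}\langle\nu^{(2)}_{\bullet}(t),A^{(1)}(t)\nu^{(2)}_{\bullet}(t)\rangle$, which is comparable to $\sum_{\bullet}\|\nu^{(2)}_{\bullet}(t)\|_{L^{2}}^{2}$ and vanishes at $t=0$ since $\nu^{(2)}_{\bullet}|_{t=0}=0$. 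Differentiating $E$ and inserting \eqref{eq:formofnu2} produces, besides $\langle\nu^{(2)}_{\bullet},\dot A^{(1)}(t)\nu^{(2)}_{\bullet}\rangle\le0$, the term $2\Re\langle ikB\psi_{\bullet},A^{(1)}(t)\nu^{(2)}_{\bullet}\rangle$ and the four boundary-correction terms. The first is treated exactly as in Lemma~\ref{lem:evolution} and Proposition~\ref{prop:DF1}: Lemma~\ref{lem:H2elliptic} reduces $\psi_{\bullet}$ to the stream function of the unmodified operator $-k^{2}+(\de_{y}-iktU')^{2}$, which is then localized via Lemma~\ref{lem:local_elliptic} (in its scattered form), and using $|B|+|B'|\le\eps_{0}|U'|$, the perturbative condition~\ref{P} and \eqref{eq:signAdot}, it is absorbed into $-c\langle\nu^{(2)},\dot A^{(1)}(t)\nu^{(2)}\rangle$.

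For the boundary-correction terms I would split each prefactor $\nu_{\bullet}=\nu^{(1)}_{\bullet}+\nu^{(2)}_{\bullet}$, and in every pairing against $A^{(1)}(t)\nu^{(2)}$ expand $A^{(1)}(t)\nu^{(2)}=\sum_{j}\chi_{j}(-k^{2}+(\de_{y}-iktU')^{2})\Psi_{j,A}$ and integrate by parts, so that the pairings reduce to quantities controlled by $\sum_{j}k^{2}\int_{I_{j}}|U'||\nabla_{k,t}\Psi_{j,A}|^{2}\le C\big(-\langle\nu^{(2)},\dot A^{(1)}(t)\nu^{(2)}\rangle\big)$ and the companion estimate for $\Psi_{j}$, cf.\ \eqref{eq:signAdot} and Lemma~\ref{lem:Bilipschitz}. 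The contributions carrying the $\nu^{(2)}$-part of a prefactor are self-interactions, in which the weights $\tfrac{1}{\dc tU'}$, $\tfrac{1}{(\dc tU')^{2}}$ supply one and two explicit powers of $t^{-1}$; since $\dg$ hitting $\e^{ikt(U-U(\bullet))}$ produces only the single factor $iktU'\dc$, cancelling \emph{one} of these, and since the remaining scalar factor is itself bounded by $\big(-\langle\nu^{(2)},\dot A^{(1)}(t)\nu^{(2)}\rangle\big)^{1/2}$ up to a further $t^{-1}$, these contributions are $\le\tfrac{C}{\langle t\rangle}\big(-\langle\nu^{(2)},\dot A^{(1)}(t)\nu^{(2)}\rangle\big)$ and hence absorbed for $t$ large.

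For the $\nu^{(1)}$-part, in the pieces multiplying $\dg(B\e^{ikt(U-U(\bullet))}\tilde{h}_{\bullet})$ the scalar prefactor equals $\tfrac{1}{ikt}\langle\nu^{(1)}_{\bullet},\tfrac{h_{\bullet}}{t(\dc U')^{2}}\e^{ikt(U-U(\bullet))}\rangle$, so the hypothesis bounds it by $\tfrac{C}{kt}\langle t\rangle^{-\beta}\left|\tfrac{\dc\omega^{in}}{U'}\big|_{y=\bullet}\right|$; when $\dg$ hits the exponential the remaining factor is $iktU'\dc B\tilde{h}_{\bullet}\e^{ikt(U-U(\bullet))}$, whose $kt$ cancels the $(kt)^{-1}$ in the prefactor, and because $\nabla_{k,t}(g\e^{iktU})=\e^{iktU}(ikg,\de_{y}g)$ carries no $iktU'$ term, the resulting building block $T_{\mathrm{bad}}$ is $\e^{ikt(U-U(\bullet))}$ times a function with $t$-uniformly bounded weighted $H^{1}$ norm, multiplied by a scalar of size $C\langle t\rangle^{-\beta}\left|\tfrac{\dc\omega^{in}}{U'}\big|_{y=\bullet}\right|$; pairing $T_{\mathrm{bad}}$ against $A^{(1)}(t)\nu^{(2)}$ by the reduction above and applying Young's inequality with a fixed small constant yields $C_{\delta}\langle t\rangle^{-2\beta+\delta}\left|\tfrac{\dc\omega^{in}}{U'}\big|_{y=\bullet}\right|^{2}$ plus a term absorbed in $-c\langle\nu^{(2)},\dot A^{(1)}(t)\nu^{(2)}\rangle$. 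In the pieces multiplying $\e^{ikt(U-U(\bullet))}\tilde{\tilde{h}}_{\bullet}$, the remaining $\nu^{(1)}$-pairing against $\tfrac{\tilde{h}^{\star}_{\bullet}}{\dc tU'}\e^{ikt(U-U(\bullet))}$ is controlled by $C\langle t\rangle^{-1+\delta}\left|\tfrac{\dc\omega^{in}}{U'}\big|_{y=\bullet}\right|$ via the explicit $t^{-1}$ together with the weighted-$L^{2}$ and $L^{1}$ bounds of Proposition~\ref{prop:nu}, contributing only a $t$-integrable error $\lesssim\|\omega^{in}\|_{H^{2}}^{2}$ (recall $\left|\tfrac{\dc\omega^{in}}{U'}\big|_{y=\bullet}\right|\lesssim\|\omega^{in}\|_{H^{1/2+}}$). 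Collecting everything, for $t$ above some fixed $T_{0}$ one arrives at
\begin{align*}
\ddt E(t)\le C_{\delta}\left|\frac{\dc\omega^{in}}{U'}\bigg|_{y=\bullet}\right|^{2}\langle t\rangle^{-2\beta+\delta}+C_{\delta}\|\omega^{in}\|_{H^{2}}^{2}\langle t\rangle^{-2+\delta},
\end{align*}
while for $t\in[0,T_{0}]$ the bound $E(t)\le C\|\omega^{in}\|_{H^{2}}^{2}$ follows from the small-time Gronwall argument used repeatedly above (all objects vanish at $t=0$, and near $t=0$ the un-integrated-by-parts forms of the equations avoid the apparent $t^{-1},t^{-2}$ singularities). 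Integrating the displayed inequality from $T_{0}$ to $t$ and using $E(0)=0$ together with $E(t)\approx\sum_{\bullet}\|\nu^{(2)}_{\bullet}(t)\|_{L^{2}}^{2}$ then gives the claim.

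The main obstacle is precisely the bookkeeping of the powers of $t$ in the boundary-correction terms: one must verify that the growth generated when $\dg$ differentiates $\e^{ikt(U-U(\bullet))}$ (the factor $iktU'\dc$) is \emph{exactly} balanced, on the one hand by the $t^{-1}$ implicit in the weight $\tfrac{h_{\bullet}}{ik(\dc tU')^{2}}$ of the scalar prefactor, and on the other hand by estimating the pairing through the weighted elliptic/localization machinery (so that $\nabla_{k,t}$ applied to the oscillatory exponential costs nothing) rather than by a crude $L^{2}$ Cauchy--Schwarz, so that the net decay of the problematic building block is $\langle t\rangle^{-\beta}$ and not $\langle t\rangle^{1-\beta}$; this is exactly where the a priori decay assumed in the statement is indispensable, and it also forces one to avoid producing a term $\langle t\rangle^{-\delta}E(t)$ on the right-hand side, which would be fatal to the Gronwall step. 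A secondary technical point, handled as in the preceding lemmas, is that the coefficient functions $U'\dc B\tilde{h}_{\bullet}$, $\tfrac{h_{\bullet}}{(\dc U')^{2}}$, $\tfrac{\tilde{h}^{\star}_{\bullet}}{\dc tU'}$, $B\tilde{\tilde{h}}_{\bullet}$ and their relatives lie in the weighted spaces required by Lemmas~\ref{lem:boundaryCorrection}--\ref{lem:boundaryCorrection2}, which uses the exponential decay of $h_{\bullet},\tilde{h}_{\bullet},\tilde{\tilde{h}}_{\bullet},\tilde{h}^{\star}_{\bullet}$ and the non-degeneracy of $U'$ at the genuine boundary points guaranteed by \ref{H1}--\ref{H3}.
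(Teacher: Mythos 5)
Your proposal follows essentially the same route as the paper's proof: construct a Lyapunov functional based on the modified multiplier $A^{(1)}(t)$ of Lemma~\ref{lem:modifiedMultiplier}, differentiate, observe that the $ikB\psi_{\bullet}$ contribution is absorbed via Lemmas~\ref{lem:H2elliptic}, \ref{lem:local_elliptic} and \eqref{eq:signAdot}, split the prefactors $\nu_{\bullet}=\nu^{(1)}_{\bullet}+\nu^{(2)}_{\bullet}$ in the boundary-correction terms, use the hypothesis $\lvert\langle\nu^{(1)}_{\bullet},\cdot\rangle\rvert\le C\langle t\rangle^{-\beta}$ together with the $ikt$--$(ikt)^{-1}$ cancellation between $\dg\e^{ikt(U-U(\bullet))}$ and the weight $\tfrac{h_{\bullet}}{ik(\dc tU')^{2}}$ to isolate the term whose decay rate is exactly $\langle t\rangle^{-\beta}$, and close with Young's inequality and time integration. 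This matches the paper's argument, which explicitly exhibits the single ``problematic term''
\begin{align*}
\dc(\a)\left|\l \nu^{(1)}_{\bullet},\tfrac{h_\a}{t(\dc U')^{2}}\e^{ikt(U-U(\a))}\r\,\l A_{1}(t)\nu^{(2)}_{\bullet},U'B\tilde{h}_{\a}\e^{ikt(U-U(\a))}\r\right|
\end{align*}
and splits $\langle t\rangle^{-\beta}=\langle t\rangle^{-\beta+\delta/2}\cdot\langle t\rangle^{-\delta/2}$ before invoking Lemma~\ref{lem:coefficientsF2} to absorb one factor into $\dot A^{(1)}$.

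The one genuine deviation is the choice of Lyapunov functional: the paper uses $I(t)=\langle A_{1}\nu^{(2)}_{\bullet},\nu^{(2)}_{\bullet}\rangle+C_{1}\langle F^{(1)},A_{1}F^{(1)}\rangle+C_{2}\langle F,A_{1}F\rangle$ (mirroring Proposition~\ref{prop:gamma}), whereas you take the bare $E(t)=\sum_{\bullet}\langle\nu^{(2)}_{\bullet},A^{(1)}\nu^{(2)}_{\bullet}\rangle$ with $E(0)=0$ and instead record the $F$-, $F^{(1)}$- and $\nu^{(1)}$-related inputs as time-integrable source terms bounded via Proposition~\ref{prop:nu}. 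Both versions give the same conclusion, and in fact the evolution \eqref{eq:formofnu2} does not involve $F$ or $F^{(1)}$ directly, so your leaner functional is arguably cleaner --- the paper's larger functional appears to be carried over from Proposition~\ref{prop:gamma} for uniformity of presentation. Your emphasis on the precise $t$-power bookkeeping (that the net decay must be $\langle t\rangle^{-\beta}$, not $\langle t\rangle^{1-\beta}$) correctly identifies the crux, and the self-interaction terms are absorbed for large $t$ as you describe, though the exact extra decay factor is closer to $\langle t\rangle^{-\delta}$ than $\langle t\rangle^{-1}$ depending on how Lemma~\ref{lem:boundaryCorrection} is applied; either way the absorption works.
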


We in particular note that $\beta>1/2$ yields a uniform bound and thus
Proposition \ref{prop:nu} yields stability of $\nu_{\bullet}^{(2)}$ as well.
If  we only assume $\beta=1/2$, this yields an upper estimate by $C \langle t \rangle^{\delta}$.

\begin{proof} 
We use the same Lyapunov functional construction as in Proposition \ref{prop:gamma}, considering the functional 
\begin{align}
I(t)=\l A_1(t)\nu^{(2)}_\bullet,\nu^{(2)}_\bullet \r+ C_{1}\langle F^{(1)}, A_{1}(t) F^{(1)} \rangle + C_{2}  \langle F(t), A_{1}(t) F(t) \rangle.
\end{align}
Referring to \eqref{eq:formofnu2}, one of the problematic terms reads
\begin{align}
&\dc(\a) \left|\l \nu^{(1)}_\bullet,\frac{h_\a}{t (\dc U')^2}  \e^{ikt(U(y)-U(\a))} \r \l A_1(t)\nu^{(2)}_\bullet ,U'  B\tilde{h}_{\a}\e^{{ikt(U(y)-U(\a))}}\r\right|\notag\\
&\qquad\leq C\langle t \rangle^{-\beta} \left| \l A_1(t)\nu^{(2)}_\bullet ,U'  B\tilde{h}_{\a}\e^{{ikt(U(y)-U(\a))}}\r\right|.
\end{align}
Using Young's inequality with one factor $\l t\r^{-\beta+\delta/2}$ and Lemma \ref{lem:coefficientsF2}, we can absorb
\begin{align}
t^{-\delta} \left| \l A_1(t)\nu^{(2)}_\bullet ,U'  B\tilde{h}_{\a}\e^{{ikt(U(y)-U(\a))}}\r\right|^{2}
\end{align}
for $t$ being sufficiently large to satisfy the smallness assumptions as in the proof of Proposition \ref{prop:DF1}.
  The result hence follows by noting that our Lyapunov functional controls $\|\nu^{(2)}_{\bullet}(t)\|_{L^{2}}^{2}$ and satisfies
  \begin{align*}
    \ddt I(t) \leq C \l t\r^{-2 \beta + \delta}, 
  \end{align*}
  and integrating in time. 
\end{proof}

\subsection{Proof of Proposition \ref{prop:nu}}
From \eqref{eq:nu1defi} and the fact that  $S(t-\tau ,0)$ is an operator from $L^{2}$ to $L^{2}$ with uniformly
bounded operator norm, it follows that
\begin{align}
  \|\nu^{(1)}_{\bullet}(t)\|_{L^{2}} \leq C t\left| \frac{\dc\omega^{in}}{U'}\bigg|_{y=\bullet}\right|.
\end{align}
In order to improve this estimate, we make use of the oscillation of $\e^{{ik\tau
    (U(y)-U(\bullet))}}$. Using \eqref{eq:nu1defi} once more and integrating by parts, we find
\begin{align}
\nu^{(1)}_\bullet(t)
&=ik\frac{\dc\omega^{in}}{U'}\bigg|_{y=\bullet} \int_{0}^{t} \e^{{ik\tau(U(y)-U(\bullet))}}S(t-\tau,0)  BU'\tilde{h}_{\bullet}\dd \tau\notag\\
&=\frac{\dc\omega^{in}}{U'}\bigg|_{y=\bullet} \int_{0}^{t} \de_\tau(\e^{{ik\tau(U(y)-U(\bullet))}}-1)\frac{1}{U(y)-U(\bullet)}S(t-\tau,0)  BU'\tilde{h}_{\bullet}\dd \tau\notag\\
&=\frac{\dc\omega^{in}}{U'}\bigg|_{y=\bullet} \frac{\e^{ikt(U(y)-U(\bullet))}-1}{U(y)-U(\bullet)} BU'\tilde{h}_{\bullet}
- \frac{\dc\omega^{in}}{U'}\bigg|_{y=\bullet} \int_{0}^{t} \frac{\e^{ik\tau(U(y)-U(\bullet))}-1}{U(y)-U(\bullet)}\de_\tau S(t-\tau,0)  BU'\tilde{h}_{\bullet}\dd \tau
\label{eq:lasttocontrol}  
\end{align}    
For the first term, the weighted  bounds \eqref{eq:weightbd1}-\eqref{eq:weightbd2} in $L^{2}$ follow by direct
computation, since
\begin{align}
   \|BU'\tilde{h}_{\bullet}\|_{L^{2}}< \infty.
\end{align}
Similarly, denoting by $B_{1}(\bullet)$ the unit ball centered at $y=\bullet$, we have
\begin{align}
  \| \frac{\e^{ikt(U(y)-U(\bullet))}-1}{U(y)-U(\bullet)} BU'\tilde{h}_{\bullet} \|_{L^{1}} \leq C \| \frac{\e^{ikt(U(y)-U(\bullet))}-1}{U(y)-U(\bullet)} BU'\tilde{h}_{\bullet}\|_{L^{1}(B_{1}(\bullet))} + C \|BU'\tilde{h}_{\bullet} \|_{L^{1}}
\end{align}
and the first term can be compared to $\|\frac{\e^{ikty}-1}{y}\|_{L^{1}} \approx
\log(1+t)$, while the second is bounded.
It hence only remains to estimate the integral term.
Using an integration by parts argument similar as in \cite{Lin-Zeng} and
\cite{Zill6}, we show the following uniform damping estimate.
\begin{lemma}
  \label{lem:damp}
  Let $U$ be mildly degenerate and $u_{0} \in H^{1}$. Let further $S(t_{1},t_{2})$
  be as in Lemma \ref{lem:evolution}. Then for any $t>0$ it holds that
  \begin{align*}
    \|\p_{t} S(t,0) u_{0}\|_{L^{2}} &\leq C  \frac{1}{t} \|S(t,0) u_{0}\|_{H^{1}} \\
    \|U' \p_{t} S(t,0) u_{0}\|_{L^{2}} &\leq C \frac{1}{t^{2}}  (\|S(t,0) u_{0}\|_{H^{1}} + \|\min(U(y)-U(\bullet), 1) \p_{y}^{2} S(t,0)u_{0} \|_{L^{2}}.
  \end{align*}
\end{lemma}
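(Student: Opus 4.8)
The plan is to extract decay from the oscillation $\e^{iktU}$ carried by $S(t,0)$ through integration by parts in $y$, in the spirit of \cite{Lin-Zeng} and \cite{Zill6}. Write $u(t)=S(t,0)u_{0}$ and let $\psi=\psi(t)$ be the stream function from \eqref{eq:psidefi}, so $\p_{t}u=ikB\psi$ and $\psi|_{\p I}=0$; it then suffices to bound $\|B\psi(t)\|_{L^{2}}\le\tfrac{C}{kt}\|u(t)\|_{H^{1}}$ and $\|U'B\psi(t)\|_{L^{2}}\le\tfrac{C}{k^{2}t^{2}}\big(\|u(t)\|_{H^{1}}+\|\min(|U-U(\bullet)|,1)\p_{y}^{2}u(t)\|_{L^{2}}\big)$, since $\|u(t)\|_{H^{1}}\le C\|u_{0}\|_{H^{1}}$ by Lemma \ref{lem:evolution}. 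The key structural point is that the elliptic operator in \eqref{eq:psidefi} is the conjugation by $\e^{iktU}$ of the \emph{$t$-independent} operator
\[
\tilde E_{0}:=-k^{2}+\de_{yy}-4\de_{y}\frac{\dc^{(1)}}{\dc}-\left(\frac{\dc^{(1)}}{\dc}\right)^{2}-2\frac{\dc^{(2)}}{\dc},
\]
because $\de_{y}-iktU'=\e^{iktU}\de_{y}\e^{-iktU}$ and $\dc,\dc^{(1)},\dc^{(2)}$ are time-independent multipliers. Hence $\psi=\e^{iktU}\tilde E_{0}^{-1}(\e^{-iktU}u)$, and with $G=G(y,z)$ the (time-independent) Dirichlet Green's function of $\tilde E_{0}$ on $I$,
\[
\psi(y)=\int_{I}G(y,z)\,\e^{ikt(U(y)-U(z))}\,u(z)\,\dd z .
\]
By \ref{P}, $\tilde E_{0}$ is a controlled perturbation of $-k^{2}+\de_{yy}$, invertible on $L^{2}$ with $|G(y,z)|+|\de_{z}G(y,z)|\le C\e^{-ck|y-z|}$ for $y\neq z$ (with a bounded jump of $\de_{z}G$ across the diagonal) and $G(y,\cdot)$ vanishing linearly at $\p I$, exactly as in the analysis underlying Lemma \ref{lem:evolution}.

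For the first estimate I integrate by parts once, using $\e^{ikt(U(y)-U(z))}=-\tfrac{1}{iktU'(z)}\de_{z}\e^{ikt(U(y)-U(z))}$. The boundary terms vanish ($G(y,\cdot)|_{\p I}=0$) and there is no diagonal contribution (the integrand $\de_{z}(Gu/U')$ is only Lipschitz at $z=y$, not singular), so
\[
\psi(y)=\frac{1}{ikt}\int_{I}\de_{z}\!\left(\frac{G(y,z)u(z)}{U'(z)}\right)\e^{ikt(U(y)-U(z))}\,\dd z .
\]
Thus $B(y)\psi(y)$ is $\tfrac{1}{ikt}$ times an integral of $\e^{ikt(U(y)-U(z))}$ against the kernels $B(y)\tfrac{\de_{z}G}{U'}u$, $B(y)\tfrac{G}{U'}\de_{z}u$, $B(y)\tfrac{GU''}{U'^{2}}u$. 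By \ref{H2}, $|B|\le\eps_{0}|U'|$ and $|U''|\le\eps_{0}|U'|$, so each kernel is $\lesssim\eps_{0}\tfrac{|U'(y)|}{|U'(z)|}(|\de_{z}G(y,z)|+k^{-1}|G(y,z)|)$, while $|U''/U'|\le\eps_{0}$ gives $|U'(y)/U'(z)|\le\e^{\eps_{0}|y-z|}$; since \ref{P} forces $\eps_{0}$ strictly below the exponential rate of $G$, Schur's test in both variables applies and gains a factor $\lesssim1/k$. Hence $\|B\psi(t)\|_{L^{2}}\le\tfrac{C}{kt}\|u(t)\|_{H^{1}}$, i.e. $\|\p_{t}S(t,0)u_{0}\|_{L^{2}}\le\tfrac{C}{t}\|S(t,0)u_{0}\|_{H^{1}}$.

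For the weighted estimate I integrate by parts a second time in the last display. This produces: a harmless local term $\sim\tfrac{1}{(kt)^{2}}\tfrac{u(y)}{U'(y)^{2}}$ (from the corner of $G$ at $z=y$), harmless because against $U'(y)B(y)\lesssim\eps_{0}|U'(y)|^{2}$ it is $\lesssim\tfrac{\eps_{0}}{(kt)^{2}}|u(y)|$; a boundary contribution localized at $z=\bullet$ involving $u|_{\p I}$, which is preserved in time ($\p_{t}u|_{\p I}=ikB|_{\p I}\psi|_{\p I}=0$, as for $F$ in Remark \ref{remark:boundary_data}) and is controlled using the vanishing of $G$ at $\p I$ together with the boundary behaviour of $u$ encoded by the weight; and an interior integral whose leading term is $\tfrac{1}{(kt)^{2}}\tfrac{G(y,z)\,\p_{z}^{2}u(z)}{U'(z)^{2}}$, the remaining terms carrying at most one derivative of $u$ and extra factors $U''/U'$, $U'''/U'$, $\dc^{(1)}$, $\dc^{(2)}$ controlled as before (using \ref{H1}, \ref{H2}, \ref{P}) by $\tfrac{C}{k^{2}t^{2}}\|u\|_{H^{1}}$. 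For the $\p_{z}^{2}u$ term I write $\p_{z}^{2}u=\min(|U(z)-U(\bullet)|,1)^{-1}\big(\min(|U(z)-U(\bullet)|,1)\p_{z}^{2}u\big)$; since the Dirichlet $G$ vanishes at $z=\bullet$ like $\dist(z,\p I)$, which by \ref{H3} is comparable up to bounded factors with $\min(|U(z)-U(\bullet)|,1)$, the kernel $\eps_{0}\tfrac{|U'(y)|^{2}}{|U'(z)|^{2}}\tfrac{|G(y,z)|}{\min(|U(z)-U(\bullet)|,1)}$ is again Schur-bounded. Summing yields $\|U'B\psi(t)\|_{L^{2}}\le\tfrac{C}{k^{2}t^{2}}(\|u(t)\|_{H^{1}}+\|\min(|U-U(\bullet)|,1)\p_{y}^{2}u(t)\|_{L^{2}})$, and multiplying by $k$ gives the claim. (For a two-endpoint interval the other endpoint is treated identically.)

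The heart of the argument — and the main obstacle — is the mismatch, present in every kernel above, between the evaluation point $y$, which carries the favourable factors $B(y)$, $U'(y)^{2}$, and the integration point $z$, which carries the unfavourable $1/U'(z)$, $1/U'(z)^{2}$. Reconciling these uniformly in $y$ rests on \ref{P} (so that $|U'(y)/U'(z)|$ grows strictly slower than the exponential localization of $G$) together with \ref{H3}, which — after splitting by the partition $\chi_{j}$ and pulling $\max_{I_{j}}|U'|$ through — absorbs the places where $U'$ degenerates. The second delicate point is the boundary layer: the linear vanishing of $G$ at $z=\bullet$ is exactly matched to the weight $\min(|U-U(\bullet)|,1)$, which is why the $t^{-2}$ estimate is necessarily conditional on this weighted second derivative (as recalled before Proposition \ref{prop:DF1}, $\p_{y}^{2}S(t,0)u_{0}$ develops a logarithmically growing layer at $\bullet$), and why the second integration by parts must be carried out with care near $\bullet$ rather than naively to the boundary. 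The remaining verification, that all the $U''$, $U'''$, $\dc^{(1)}$, $\dc^{(2)}$ terms generated by the two integrations by parts stay subordinate under the mild-degeneracy hypotheses, is routine but bookkeeping-heavy.
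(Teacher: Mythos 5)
Your proof takes a genuinely different route from the paper's. The paper argues by \emph{duality and energy}: it never writes down a kernel. It tests the modified elliptic equation \eqref{eq:psidefi} against $-|U'|^{2}\psi$ to obtain (as in Lemma \ref{lem:H2elliptic}) a coercive lower bound $c\int|U'|^{2}|\nabla_{t}\psi|^{2}$, pairs this against the upper bound coming from writing $-\int|U'|^{2}u\psi=\int\frac{\de_{y}\e^{iktU}}{ikt}U'u\,\e^{-iktU}\psi$ and integrating by parts once, and reads off the first estimate by combining the two sides with $|B|\lesssim|U'|$. For the second estimate it introduces a potential $\sigma=\e^{-iktU}(-\Delta_{k})^{-1}\e^{iktU}ik\psi$ with vanishing Dirichlet data and plays the same duality game one derivative higher, with Hardy's inequality supplying the weighted $H^{2}$ norm. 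You, by contrast, conjugate the elliptic operator to the $t$-independent $\tilde E_{0}$, represent $\psi$ via the Dirichlet Green's function $G$, integrate the oscillation into $G$ by parts, and close with Schur's test on each resulting kernel. Both proofs locate the $t^{-1}$ (resp.\ $t^{-2}$) decay in the oscillatory factor and both crucially use \ref{H2}, \ref{P} and the boundary Dirichlet condition.

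The comparison: your kernel approach is more explicit and gives a transparent account of \emph{where} the boundary layer weight $\min(|U-U(\bullet)|,1)$ comes from — it is literally matched to the linear vanishing of $G(y,\cdot)$ at $z=\bullet$, a nice structural observation the paper's duality proof leaves implicit. The price is that your argument rests on pointwise bounds for $G$ — exponential decay $|G|+|\de_{z}G|\lesssim\e^{-ck|y-z|}$ with a rate strictly larger than $\eps_{0}$, the jump of $\de_{z}G$ across the diagonal, and linear vanishing at $\p I$ — which the paper never establishes. You attribute these to ``the analysis underlying Lemma \ref{lem:evolution},'' but that lemma is proved by energy methods and contains no kernel analysis, so this citation does not actually cover the claim. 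These Green's-function facts are very plausible for a small perturbation of $-k^{2}+\de_{yy}$ under \ref{P}, but they require a separate argument (for instance, a standard ODE Green's function construction plus a perturbation series), and without it the proof has a genuine gap. Two smaller points worth tightening if you pursue this route: (i) the ``$k^{-1}|G|$'' in your kernel bound is not the right scalar comparison — $|G|$ and $|\de_{z}G|$ should be kept as separate kernels and each Schur-bounded with its own $k$-scaling ($\int|\de_{z}G|\dd z\sim k^{-1}$, $\int|G|\dd z\sim k^{-2}$); and (ii) your treatment of the boundary term after the second integration by parts should invoke the 1D Sobolev embedding $|u(\bullet)|\lesssim\|u\|_{H^{1}}$ together with the $U'(y)^{2}/U'(\bullet)^{2}$ cancellation against the exponential decay of $\de_{z}G(y,\bullet)$; the weight on $\de_{y}^{2}u$ plays no role there, contrary to the phrasing ``boundary behaviour of $u$ encoded by the weight.''

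Finally, a warning about scope: the paper's energy argument extends with ``minor modifications'' (its words) to the other stream-function operators appearing in the paper, precisely because it never needs the specific kernel of $\tilde E_{0}$; your approach would need the Green's function analysis to be redone for each modified elliptic operator. That is a real disadvantage given how the lemma is subsequently reused.
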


\begin{proof}
  Following the notation of the definition of $S(t,0)$, we denote $u(t)=S(t,0)u_0$
  and $\dt S(t,0)u_{0}= ik B \psi$.
  Then, by \eqref{eq:mildy2}, we obtain that
  \begin{align*}
  \|\dt S(t,0)u_{0}\|_{L^{2}}^{2}= \int_I |ik B(y) \psi(y)|^{2}\dd y = \int |B(y)|^{2} |k \psi(y)|^{2}\dd y \leq C^{2} \int |U'(y)|^{2} |\nabla_{t,k} \psi(y)|^{2}\dd y.
  \end{align*}
  Furthermore, by integration by parts and the definition of $\psi$ in \eqref{eq:psidefi}
  we proceed as in Lemma \ref{lem:H2elliptic} to obtain that
  \begin{align}
    -\int |U'|^{2} u \psi &= -\int |U'|^{2} \left(-k^{2}+(\de_{y}-iktU')^{2}-4(\de_{y}-iktU') \frac{\dc^{(1)}}{\dc}-\left(\frac{\dc^{(1)}}{\dc}\right)^2-2\frac{\dc^{(2)}}{d}\right)\psi  \psi \\
     &\geq c  \int |U'|^{2}|\nabla_{t} \psi|^{2} - \p_{y}(|U'|^{2}/2) |\psi|^{2},
  \end{align}
%
  which is positive definite since $U$ is mildly degenerate (c.f. Lemma \ref{lem:H2elliptic}).
  Hence, it suffices to estimate the left-hand side from above as 
  \begin{align*}
    -\int_I |U'|^{2} u \psi &=  \int_I |U'|^{2} u \e^{iktU} \e^{-iktU} \psi=  \int_I \frac{\de_y\e^{iktU}}{ikt} U' u  \e^{-iktU} \psi\\
    &=-  \int_I \frac{\e^{iktU}}{ikt} \de_y(U' u  \e^{-iktU} \psi)\leq  \frac{1}{kt}\|u\|_{H^{1}} \| \| U' \e^{-iktU} \psi\|_{H^{1}}. 
  \end{align*}
  Combining the lower and upper bound, we hence obtain that
  \begin{align*}
    \|\dt S(t,0)u_{0}\|_{L^{2}} \leq \sqrt{ \int_I |ik B(y) \psi(y)|^{2}\dd y} \leq \frac{C}{kt}\|u\|_{H^{1}},
  \end{align*}
  which is the first decay estimate.

  Similarly, for the higher decay estimates we introduce a potential for $ik
  \psi$ by
  \begin{align*}
    \sigma=\e^{-iktU}(-\Delta_k)^{-1}\e^{iktU}ik\psi(t,y),
  \end{align*}
  where we require that $\sigma$ has vanishing Dirichlet data.
  We then compute that 
  \begin{align*}
    \int |U'|^{4}|ik\psi|^{2} \approx \int |U'|^{4} k^{2} \sigma ik u = \int k^{2}|U'|^{4}e^{-iktU} \overline{\hat{\sigma}}e^{iktU}\hat{u} \\
    = \int\e^{iktU}  \p_{y} \frac{1}{|U'|} \p_{y} (|U'|^{3}e^{-iktU}\overline{\hat{\sigma}} \hat{u}) \\
    \leq C \||U'|^{2}\e^{-iktU}\sigma\|_{H^{2}} (\|u\|_{H^{1}} + \|\min(U(y)-U(\bullet), 1) \p_{y}^{2}u\|_{L^{2}}),
  \end{align*}
  where we used Hardy's inequality and the Dirichlet data of $\sigma$ to obtain
  the weighted $H^{2}$ norm.
  The result hence follows by noting that, due to elliptic regularity of the
  Laplacian, the weighted $H^{2}$ norm of $\sigma$ is controlled by the weighted
  $L^{2}$ norm of $\psi$.
\end{proof}

We further note that our proof only used the oscillation of $\e^{iktU}$ and the
elliptic regularity of the (modified) stream function map $u \mapsto \psi$.
Hence, with minor modifications to the constants it also holds for the usual
definition of stream function and the definition introduced in Section
\ref{sec:higher}.

\begin{corollary}
  Let $\omega^{in} \in H^{1}$ with $\int \omega^{in}(x) \dd x =0$ and suppose that $\|F(t)\|_{H^{1}}$ is uniformly
  bounded. Then for any $t>0$
  \begin{align*}
    \int |U'| |v|^{2} \leq \frac{C}{t} \|\omega^{in}\|_{H^{1}}.
  \end{align*}
  If further $\omega^{in} \in H^{2}$ and  $\|\min(|U(y)-U(a)|, |U(y)-U(b)|,
  1)\p_{y}^{2}F(t)\|_{L^{2}} \leq C \|\omega^{in}\|_{H^2}$, then also
  \begin{align*}
    \int |U'|^{2} |v|^{2} \leq \frac{C}{t^{2}} \|\omega^{in}\|_{H^{2}}.
  \end{align*}
\end{corollary}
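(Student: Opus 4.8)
The plan is to reduce the statement, frequency by frequency, to the oscillation / integration-by-parts mechanism behind Lemma~\ref{lem:damp}, with the uniform bounds on the profile $F(t)=S(-t)\omega(t)$ from the preceding sections as input. First I would expand in $x$, so that $\int_{\T\times I}|U'||v|^2\,\dd x\,\dd y=\sum_{k\neq 0}\int_I|U'(y)||\nabla_k\psi_k(t,y)|^2\,\dd y$, where $-\Delta_k\psi_k=\omega_k$ with $\psi_k|_{\de I}=0$ and, by the hypothesis $\int\omega^{in}\,\dd x=0$, only $|k|\ge 1$ contribute. For such a $k$, since $\psi_k$ vanishes on $\de I$, Lemma~\ref{lem:weighted_elliptic} applied on $J=I$ gives
\begin{align*}
\gamma_0\int_I|U'(y)||\nabla_k\psi_k(t,y)|^2\,\dd y\le -\l|U'|\psi_k,\Delta_k\psi_k\r=\l|U'|\psi_k,\omega_k\r,
\end{align*}
and the analogous coercivity with the weight $|U'|^2$ (valid under \ref{H1}--\ref{H2}, since $k^2|U'|^2-\tfrac12\de_{yy}|U'|^2\ge c|U'|^2$, as in Lemma~\ref{lem:H2elliptic}) bounds $\int_I|U'|^2|\nabla_k\psi_k|^2$ by $|\l|U'|^2\psi_k,\omega_k\r|$. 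Everything thus reduces to extracting time decay from pairings $\l |U'|^{\sigma}\psi_k,\omega_k\r$, $\sigma\in\{1,2\}$.

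Next I would exploit the oscillation of $\omega_k=\e^{iktU}F_k$, exactly as in the proof of Lemma~\ref{lem:damp}: on each interval $I_j$ of the partition $U'$ has a fixed sign and $\e^{iktU}=\tfrac{1}{iktU'}\de_y\e^{iktU}$, so integrating by parts---no boundary terms appear because $\psi_k|_{\de I}=0$---transfers a factor $\tfrac1{kt}$ onto the pairing and one derivative onto either $F_k$ (controlled by $\|F(t)\|_{H^1}\le C\|\omega^{in}\|_{H^1}$, the uniform bound of Propositions~\ref{prop:F1} and~\ref{prop:DF1}, which is the first hypothesis here) or onto $\psi_k$, which, using $|U''|\le\eps_0|U'|$ to absorb the derivative falling on the weight, is reabsorbed into the weighted left-hand side by Young's inequality, precisely as in the closing step of Lemma~\ref{lem:damp}. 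That gives the first displayed inequality. For the more strongly weighted estimate I would run the argument with the weight $|U'|^2$ and integrate by parts against $\e^{iktU}$ a \emph{second} time: as in the second half of the proof of Lemma~\ref{lem:damp}, one introduces the auxiliary potential $\sigma=\e^{-iktU}(-\Delta_k)^{-1}\e^{iktU}ik\psi_k$, and Hardy's inequality together with the Dirichlet data of $\sigma$ converts the two gained derivatives into the \emph{weighted} $H^2$ quantity $\|\min(|U(y)-U(\a)|,|U(y)-U(\b)|,1)\de_y^2 F(t)\|_{L^2}$---which is exactly the second hypothesis of the corollary, proven uniformly $\le C\|\omega^{in}\|_{H^2}$ in Propositions~\ref{prop:DF1} and~\ref{prop:gamma}, Lemma~\ref{lem:suboptimal} and Proposition~\ref{prop:nu}. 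Summing the frequency-by-frequency bounds over $k$, using $\sum_k\|F_k(t)\|_{H^1}^2=\|F(t)\|_{H^1}^2$ and the corresponding Plancherel identity for the weighted $H^2$ norm, then produces the two displayed inequalities.

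The hard part---and the reason the two estimates carry different powers of $|U'|$ and that the \emph{weighted} $H^2$ norm rather than the plain one appears---is the interplay with the possibly degenerate boundary. Since $\psi_k$ vanishes on $\de I$ but $\de_y\psi_k$ does not, a boundary layer of logarithmically growing amplitude forms, and after integrating by parts the natural weight $\min(|U(y)-U(\a)|,|U(y)-U(\b)|,1)$ appears through the factors $\tfrac1{U(y)-U(\bullet)}$ localized near the boundary points, while the degeneracy of $U'$ there forces the weighted spaces used throughout Sections~\ref{sec:higher}--\ref{sec:H2_2}. This is precisely what the decompositions $\dg F=\beta_\bullet+F^{(1)}$, $\dg\beta_\bullet=\gamma_\bullet+\nu_\bullet$ and the weighted stability estimates of Propositions~\ref{prop:gamma}--\ref{prop:nu} (via Lemma~\ref{lem:suboptimal}) were set up to handle: with those uniform bounds in hand the present corollary is a short consequence, but without them the multiple-integration-by-parts argument would produce an uncontrolled boundary contribution. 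The remaining, interior portion is routine---the oscillation / integration-by-parts scheme of Lemma~\ref{lem:damp} combined with the weighted elliptic estimates of Section~\ref{sec:simple_case}.
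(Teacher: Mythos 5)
The paper's proof is the single remark that the argument of Lemma~\ref{lem:damp} carries over verbatim once $S(t,0)$ is replaced by the map $\omega^{in}\mapsto F(t)$. Your proposal unpacks that remark, but the power of $|U'|$ you carry through the integration by parts does not match Lemma~\ref{lem:damp}, and at your chosen power the argument does not close.

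Concretely: via Lemma~\ref{lem:weighted_elliptic} you pair $|U'|\psi_k$ against $\omega_k=\e^{iktU}F_k$ and write $\e^{iktU}=\tfrac{1}{iktU'}\de_y\e^{iktU}$. Since $\tfrac{|U'|}{U'}=\sgn(U')$ is locally constant, the entire $|U'|$ weight is consumed by the oscillation factor: after the integration by parts the residual integrand is $\tfrac{1}{kt}\,\e^{iktU}\,\sgn(U')\bigl(\de_y\psi_k\,F_k+\psi_k\,\de_y F_k\bigr)$, with no surviving power of $|U'|$ on the $\psi_k$-factors. Cauchy--Schwarz then produces $\|\de_y\psi_k\|_{L^2}$ and $\|\psi_k\|_{L^2}$ \emph{unweighted}, which cannot be reabsorbed into the $|U'|$-weighted left-hand side when $U'$ degenerates. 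Your step ``using $|U''|\le\eps_0|U'|$ to absorb the derivative falling on the weight'' never fires, because after the integration by parts there is no weight left to differentiate.

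The fix is to mirror Lemma~\ref{lem:damp} exactly and pair $|U'|^2\psi_k$ (resp.\ $|U'|^4\psi_k$ for the second display) with $\omega_k$. Then $\tfrac{|U'|^2}{iktU'}\de_y\e^{iktU}=\tfrac{U'}{ikt}\de_y\e^{iktU}$, one factor of $U'$ survives the integration by parts, lands on the $\psi$-factor, and it is \emph{that} surviving weight which is absorbed into the weighted left-hand side via $|U''|\le\eps_0|U'|$ and Young's inequality. What the method actually yields is $\int|U'|^2|v|^2\lesssim t^{-2}\|\omega^{in}\|_{H^1}^2$ and $\int|U'|^4|v|^2\lesssim t^{-4}\|\omega^{in}\|_{H^2}^2$, exactly what falls out of $\|\de_t F\|_{L^2}$ and $\|U'\de_t F\|_{L^2}$ through $|B|\le\eps_0|U'|$; the exponents and the unsquared $\|\omega^{in}\|$ in the corollary's display appear to be typographical. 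Aside from this, your frequency reduction, the use of the auxiliary potential $\sigma$ with Hardy's inequality for the higher decay rate, and the identification of the two hypotheses on $F(t)$ as the precise input at each step all match Lemma~\ref{lem:damp} and the paper's intent.
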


\begin{proof}
 The same method of proof works if we consider map $\omega^{in} \mapsto F(t)$ instead
 of $S(t,0)$.
\end{proof}

In order to apply the second of the preceding results, we require control of the
evolution of $S(t,0)$ in $H^{2}$, which is formulated as the following lemma.
\begin{lemma}
  \label{lem:higherS}
  Let $S(t,0)$ be as in Lemma \ref{lem:evolution} and suppose that $u \in H^{2}$, then also
  \begin{align*}
    \|\min(U(y)-U(a), U(y)-U(b), 0) \p_{y}^{2}S(t,0) u\|_{L^{2}} \leq  C (1+|t|)^{\delta} \|u\|_{H^{2}}.
  \end{align*}
\end{lemma}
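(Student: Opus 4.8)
The plan is to run, one derivative further and in a weighted space, the same scheme used for $\dg F^{(1)}$ in Proposition \ref{prop:DF1} and for $\gamma_\bullet,\nu^{(2)}_\bullet$. Write $u(t)=S(t,0)u_{0}$, so that $\dt u=ikB\psi$ with $\psi$ the solution of the (already modified) elliptic problem \eqref{eq:psidefi}, and set $w(y):=\min(|U(y)-U(\a)|,|U(y)-U(\b)|,1)$, the weight in the statement, which is globally $W^{1,\infty}$ and time independent. Since $\dc(y)$ is bounded above and below in absolute value by \ref{H3} while $\dc'$ is bounded by \ref{H2} and \eqref{eq:part1}, one has $\p_{y}^{2}u=\dc^{-2}(\dg^{2}u-\dc\,\dc'\,\p_{y}u)$ and hence $\|w\,\p_{y}^{2}u\|_{L^{2}}\le C(\|w\,\dg^{2}u\|_{L^{2}}+\|\dg u\|_{L^{2}})$; by the $H^{1}$ bound of Lemma \ref{lem:evolution} it therefore suffices to control $\|w\,\dg^{2}u\|_{L^{2}}$. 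To this end I would apply $\dg$ twice to the system defining $S(t,0)$, exactly as in the passage from Lemma \ref{lem:H1equation} to Lemma \ref{lem:F2} and in Lemma \ref{lem:Db_splitting}: the commutators $[(\de_{y}-iktU')^{2},\dg]$, $[(\de_{y}-iktU')\tfrac{\dc^{(1)}}{\dc},\dg]$ and $[\tfrac{\dc^{(2)}}{d},\dg]$ from \eqref{eq:comm0}--\eqref{eq:comm4} produce a further-modified elliptic operator for $\dg^{2}\psi$ whose extra terms are lower order under \ref{P}, together with inhomogeneities involving $\dg\psi$, $(\de_{y}-iktU')\psi$ and the coefficient derivatives $\dg B,\dg^{2}B$ (bounded by \ref{H2}); and since $\dg\psi,\dg^{2}\psi$ do not vanish on $\de I$, one splits each into a homogeneous-Dirichlet part and linear combinations of homogeneous correctors of the type $\tilde h_{\bullet},\tilde{\tilde h}_{\bullet},\tilde h^{\star}_{\bullet}$ for the relevant modified operators, rewriting the boundary evaluations $\dg\psi|_{y=\bullet},\dg^{2}\psi|_{y=\bullet}$ by integration by parts against these correctors, as in \eqref{eq:boundaryterms} and in the proof of Lemma \ref{lem:Db_splitting}, so that they carry the gain $\tfrac{1}{iktU'}$.

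Next I would run the energy argument of Propositions \ref{prop:DF1} and \ref{prop:gamma} in the weighted space: with $A^{(1)}(t)=\sum_{j}\chi_{j}A^{(1)}_{j}(t)\chi_{j}$ as in Lemma \ref{lem:modifiedMultiplier}, consider
\begin{align*}
I(t):=\langle w\,\dg^{2}u,\,A^{(1)}(t)\,w\,\dg^{2}u\rangle+C_{1}\langle \dg u,A^{(1)}(t)\dg u\rangle+C_{2}\langle u,A^{(1)}(t)u\rangle .
\end{align*}
The two lower-order summands are already uniformly controlled by Lemma \ref{lem:evolution}, and since $w$ is time independent, $\ddt$ of the leading term only sees $\dot A^{(1)}$ and $\de_{t}\dg^{2}u$; moreover $[\dg,w]$ and $[\chi_{j},w]$ are supported near $\de I$, where by \ref{H3} the factor $U'$ is comparable to a constant on the boundary interval, so these commutators are harmless $W^{1,\infty}$ errors. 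The interior contributions, coming from the stream functions attached to the (further-)modified operators, are reduced to the standard elliptic operator by an estimate of the type of Lemma \ref{lem:H2elliptic} (one re-verifies coercivity after testing with $-|U'|\cdot$, the extra terms being small under \ref{P}), then to localized stream functions by Lemmas \ref{lem:H1elliptic} and \ref{lem:local_elliptic}, and finally absorbed into $\langle w\,\dg^{2}u,\dot A^{(1)}w\,\dg^{2}u\rangle$ using the sign and structure of $\dot A^{(1)}$ from \eqref{eq:signAdot}. The boundary corrections — in particular the dangerous terms where $\dg$ hits $\e^{ikt(U-U_{\bullet})}$ and spawns a spurious factor $iktU'$, as in the last two lines of \eqref{eq:formofnu2} — are estimated by Lemmas \ref{lem:boundaryCorrection}, \ref{lem:boundaryCorrection2} and \ref{lem:coefficientsF2}, using the logarithmic growth bounds for boundary evaluations established there, together with the crucial observation that the weight $w$ supplies exactly the missing factor $|U(y)-U(\bullet)|$, turning $\tfrac{\e^{ikt(U-U_{\bullet})}-1}{U-U_{\bullet}}$ into a quantity controlled, as in the proof of Proposition \ref{prop:nu}, by $C_{\delta}\langle t\rangle^{\delta}$. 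After Young's inequality with the slack $\langle t\rangle^{-\delta}$ this yields, for $t\gg1$,
\begin{align*}
\ddt I(t)\le C\langle t\rangle^{-1+\delta}\big(I(t)+\|u_{0}\|_{H^{2}}^{2}\big),
\end{align*}
with Gronwall's lemma used on the compact initial time interval. Integrating gives $I(t)\le C\langle t\rangle^{C\delta}\|u_{0}\|_{H^{2}}^{2}$, and after relabelling $\delta$ and combining with the reduction of the first paragraph one obtains $\|w\,\p_{y}^{2}u\|_{L^{2}}\le C\langle t\rangle^{\delta}\|u_{0}\|_{H^{2}}$, which is the claim.

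The main obstacle is the boundary layer: each extra $\dg$ applied to a homogeneous correction $\e^{ikt(U-U_{\bullet})}\tilde h_{\bullet}$ costs a factor $U'$ and thereby degrades the available time decay by one power, so that the relevant pairing decays only at the borderline rate $\langle t\rangle^{-1/2}$ appearing in Lemma \ref{lem:suboptimal}; the weight $w$ restores square integrability, but only up to the arbitrarily small loss $\langle t\rangle^{\delta}$, which is precisely why a uniform bound cannot be claimed here. A secondary, bookkeeping difficulty is that passing from $H^{1}$ to $H^{2}$, and then to the weighted $\p_{y}^{2}$ estimate, changes both the elliptic operator and the family of homogeneous correctors at every step, so one must carefully track which modified operator and which correctors appear and re-verify the Lemma \ref{lem:H2elliptic}-type coercivity for the new operator under \ref{P}.
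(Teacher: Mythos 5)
Your proposal takes a genuinely different route from the paper, but it has a gap at the single point where the $\langle t\rangle^{\delta}$ growth rate must be earned. The paper does \emph{not} run a direct energy argument on $w\,\dg^{2}u$. Instead it represents the singular boundary layer through Duhamel: it isolates the piece $\nu^{(1)}_{\bullet}(t)=ik\frac{\dc\omega^{in}}{U'}\big|_{\bullet}\int_{0}^{t}\e^{ik\tau(U-U_{\bullet})}S(t-\tau,0)BU'\tilde h_{\bullet}\,\dd\tau$, integrates by parts in $\tau$ to produce the compensated oscillation $\frac{\e^{ikt(U-U_{\bullet})}-1}{U-U_{\bullet}}$, and estimates the remainder by the $\p_{\tau}S$ decay of Lemma \ref{lem:damp}; the $\langle t\rangle^{\delta}$ loss is then obtained by a bootstrap, first at rate $\tau^{-1/2}$ via Lemma \ref{lem:suboptimal} and then upgraded. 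The same Duhamel argument is then repeated with $S(t,0)$ in place of $\omega^{in}\mapsto F(t)$.

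The gap in your energy argument is the following. When you apply $\dg$ a second time, the term where $\dg$ hits $\e^{ikt(U-U_{\bullet})}$ produces a forcing proportional to $ik\,\frac{\dc u_{0}}{U'}\big|_{y=\bullet}\,BU'\,\e^{ikt(U-U_{\bullet})}\tilde h_{\bullet}$, whose $L^{2}$ norm (and even its weighted $L^{2}(w)$ norm) is \emph{constant in time}: the factor $iktU'$ brought down by $\dg$ exactly cancels the $1/t$ gain of the boundary evaluations. When this forcing is paired against $A^{(1)}(t)\,w\,\dg^{2}u$ in $\ddt I(t)$ and one applies Lemma \ref{lem:boundaryCorrection2} plus Young's inequality to absorb into $\langle w\,\dg^{2}u,\dot A^{(1)}w\,\dg^{2}u\rangle$, the complementary term is a nonvanishing constant (or, if one routes the absorption through the weaker second piece of \eqref{eq:signAdot}, even grows like $t^{1-\delta}$), so integrating the resulting differential inequality gives at best linear growth in $t$, not $\langle t\rangle^{\delta}$. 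Your appeal to the quantity $\frac{\e^{ikt(U-U_{\bullet})}-1}{U-U_{\bullet}}$ being tamed by the weight $w$ is precisely the step that does not survive the translation: that compensated quantity is created by \emph{time integration} of the oscillating phase in a Duhamel formula; it never appears inside $\ddt I(t)$, where one sees only the raw $\e^{ikt(U-U_{\bullet})}$. This is exactly why the paper splits off $\nu^{(1)}$ and treats it by Duhamel rather than by a Lyapunov functional, and why it needs the improved decay of $\p_{\tau}S(t-\tau,0)$ (Lemma \ref{lem:damp}) and the bootstrap structure. To repair your argument you would have to build the time-integrated cancellation into the functional itself, which is in effect reconstructing the Duhamel decomposition. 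The reduction in your first paragraph from $\p_{y}^{2}$ to $\dg^{2}$, the bookkeeping of modified elliptic operators, and the use of the weight $w$ to localize the boundary singularity are all reasonable; the missing idea is specifically the exploitation of time oscillation of the non-decaying boundary forcing.
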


Before coming to the proof of Lemma \ref{lem:higherS}, we show how it can be
used to complete our proof of Proposition \ref{prop:nu}.

\begin{proof}[Proof of Proposition \ref{prop:nu}]
 In view of \eqref{eq:lasttocontrol} (and controlling by Gronwall's Lemma
   for small times), it suffices to control the integral term
  \begin{align}
   \int_{1}^{t} \frac{\e^{ik\tau(U(y)-U(\bullet))}-1}{U(y)-U(\bullet)}\de_\tau S(t-\tau,0)  BU'\tilde{h}_{\bullet}\dd \tau.
  \end{align}
  Using H\"older's inequality, Lemma \ref{lem:damp} and \ref{lem:higherS}, we obtain 
\begin{align}
  & \left\| \min(U',1)   \int_{1}^{t} \frac{\e^{ik\tau(U(y)-U(\bullet))}-1}{U(y)-U(\bullet)}\de_\tau S(t-\tau,0)  BU'\tilde{h}_{\bullet}\dd \tau \right\|_{L^{1}}\notag \\
  &\leq C\int_{1}^{t} \|\frac{\e^{ik\tau(U(y)-U(\bullet))}-1}{U(y)-U(\bullet)}\|_{L^{2}} \frac{1}{(t-\tau)^{2}} \Big(\|\min(U(y)-U(a), U(y)-U(b), 1) \p_{y}^{2}S(t,0) BU'\tilde{h}_{\bullet}\|_{L^{2}}\notag\\
  &\qquad+ \|S(t,0) BU'\tilde{h}_{\bullet}\|_{H^{1}}\Big)   \dd\tau\notag \\
  &\leq C \int_{1}^{t} \frac{ \tau^{\delta} \sqrt{\tau}}{\tau^{2}} d\tau \|BU'\tilde{h}_{\bullet}\|_{H^{2}} \leq C. 
\end{align}
Similarly, we conclude 
\begin{align}
  &\left\| \int_{1}^{t} \min(|U(y)-U(\bullet)|, 1) \frac{\e^{ik\tau(U(y)-U(\bullet))}-1}{U(y)-U(\bullet)} \p_{\tau} S(t-\tau,0) BU'\tilde{h}_{\bullet} \dd\tau \right\|_{L^{1}} \notag\\
  &\leq C \int_{1}^{t} \|\p_{\tau} S(\tau,0) BU'\tilde{h}_{\bullet}\|_{L^{2}} \dd\tau \notag\\
  &\leq C \|BU'\tilde{h}_{\bullet}\|_{H^{2}} \int_{1}^{t} \frac{1}{\tau^{2}} \dd\tau \leq C < \infty. 
\end{align}
The integrals from $0$ to $1$ can controlled using the rough linear growth bound
established at the beginning of this section.
\end{proof}

It hence remains to prove Lemma \ref{lem:higherS}, for which we use a bootstrap approach.

\subsection{Proof of Lemma \ref{lem:higherS}}
We show that the map $\omega^{in}\mapsto F(t)$ satisfies a similar growth
bound.
Repeating and modifying this argument slightly, the growth bound for $S(t,0)$
follows.

Recalling \eqref{eq:lasttocontrol}, we infer that 
\begin{align}
&\left|\l \nu^{(1)}_\bullet,\frac{h_\a}{ t(\dc U')^2}  \e^{ikt(U(y)-U(\a))} \r\right|\\
&\quad\leq \left|\frac{\dc\omega^{in}}{U'}\bigg|_{y=\bullet}\l  \frac{\e^{ikt(U(y)-U(\bullet))}-1}{U(y)-U(\bullet)} BU'\tilde{h}_{\bullet},\frac{h_\a}{ t(\dc U')^2}  \e^{ikt(U(y)-U(\a))} \r\right|\\
&\qquad+\left|\frac{\dc\omega^{in}}{U'}\bigg|_{y=\bullet} \int_{0}^{t} \l\frac{\e^{ik\tau(U(y)-U(\bullet))}-1}{U(y)-U(\bullet)}\de_\tau S(t-\tau,0)  BU'\tilde{h}_{\bullet}\dd \tau ,\frac{h_\a}{ t(\dc U')^2}  \e^{ikt(U(y)-U(\a))} \r\right|
\end{align}
For the first term, we may estimate
\begin{align}
  | \l \frac{\e^{ikt(U(y)-U(\bullet))}-1}{U(y)-U(\bullet)} BU'\tilde{h}_{\bullet}, \frac{h_\a}{ t(\dc U')^2}  \e^{ikt(U(y)-U(\a))}\r | \leq C \frac{1}{t} \|\frac{\e^{ikt(U(y)-U(\bullet))}-1}{U(y)-U(\bullet)}  \|_{L^{1}} \leq C\frac{\log(1+t)}{t}.
\end{align}
For the second term, we instead use Lemma \ref{lem:damp} and the estimate
\begin{align*}
  \| \frac{\e^{ikt(U(y)-U(\bullet))}-1}{U(y)-U(\bullet)} \|_{L^{2}} \leq \sqrt{\tau}, \\
   \|\de_\tau S(t-\tau,0)  BU'\tilde{h}_{\bullet}\|_{L^{2}} \leq \frac{1}{\tau} \|S(t-\tau,0)  BU'\tilde{h}_{\bullet}\|_{H^{1}} \leq \frac{C}{\tau} \| BU'\tilde{h}_{\bullet}\|_{H^{1}} \leq \frac{C}{\tau}.
\end{align*}
Combined with the $\frac{1}{iktU'}$ factor, we hence obtain a decay rate
$\frac{1}{\sqrt{t}}$ and thus by Lemma \ref{lem:suboptimal} we obtain a growth
bound
\begin{align}
  \|\nu^{(2)}_\bullet(t)\|_{L^{2}} \leq C_{\delta} \|\omega^{in}\|_{H^{2}} + C_{\delta} \left|\frac{\dc\omega^{in}}{U'}\bigg|_{y=\bullet} \right| t^{\delta}.
\end{align}
Similarly, we obtain that
\begin{align}
  \|\min(U(y)-U(\bullet), 1) \nu^{(1)}_{\bullet}(T)\|_{L^{2}} &\leq \left|\frac{\dc\omega^{in}}{U'}\bigg|_{y=\bullet}  \right|
  \left( \|BU'\tilde{h}_{\bullet}\|_{L^{2}}  + \int_1^{T}\|\p_{\tau} S(t-\tau,0) BU'\tilde{h}_{\bullet}\|_{L^{2}} \dd\tau \right) \\
  & \leq \left|\frac{\dc\omega^{in}}{U'}\bigg|_{y=\bullet}  \right| (C+\log(1+T)).
\end{align}
Combining these results we obtain that 
\begin{align}
 H^{2} \ni \omega^{in} \mapsto \min(U(y)-U(\bullet), 1) \dg^{2}F(T) \in L^{2}
\end{align}
satisfies a sub-optimal growth bound:
\begin{align}
  \|\min(U(y)-U(\bullet), 1) \dg^{2}F(T) \|_{L^{2}} \leq C \langle t \rangle^{\delta} \|\omega^{in}\|_{H^{2}}.
\end{align}
However, repeating the same argument with $S(t,0)$ in place of $\omega^{in} \mapsto
F(t)$, we also obtain that
\begin{align}
  \|\min(U(y)-U(\bullet), 1) \dg^{2} S(t,0)u_{0}\|_{L^{2}} \leq C \langle t \rangle^{\delta} \|u_{0}\|_{H^{2}}.
\end{align}
Thus, me revisit the above decay estimates and use that
\begin{align*}
    \| U' \p_{\tau} S(\tau,0) BU'\tilde{h}_{\bullet}\|_{L^{2}} 
  &\leq \frac{1}{\tau^{2}} (\|S(\tau,0) BU'\tilde{h}_{\bullet}\|_{H^{1}} + \|\min(U(y)-U(\bullet), 1) \dg^{2} S(\tau,0) BU'\tilde{h}_{\bullet}\|_{H^{1}}) \\
  &\leq C \frac{\langle \tau \rangle^{\delta}}{\tau^{2}}.
\end{align*}
This yields integrability in time and therefore 
\begin{align}
\left|\l \nu^{(1)}_\bullet,\frac{h_\a}{ t(\dc U')^2}  \e^{ikt(U(y)-U(\a))} \r\right|\leq C\frac{\log(1+t)}{t} \leq Ct^{-1+\delta}
\end{align}
and by Lemma \ref{lem:suboptimal}
\begin{align}
    \|\nu^{(2)}_{\bullet}(t)\|_{L^{2}}^2 \leq  C_{\delta} \|\omega^{in}\|^2_{H^{2}} + C\left|\frac{\dc\omega^{in}}{U'}\bigg|_{y=\bullet}\right|^2.
\end{align}
Similarly, the control of $\nu^{(1)}_{\bullet}$ improves to 
\begin{align}
  \|\min(U'(U(y)-U(\bullet)), 1) \nu^{(1)}_{\bullet}(t)\|_{L^{2}}|_{t=1}^{T} \leq \int_1^{T}\|U' \p_{\tau} S(\tau,0)   BU'\tilde{h}_{\bullet}\|_{L^{2}} d\tau \\
\leq \int_1^{T} \frac{1}{\langle \tau \rangle^{2}} \tau^{\delta} \| BU'\tilde{h}_{\bullet}\|_{H^{2}} \dd\tau \leq C.
\end{align}
This concludes the proof of Lemma \ref{lem:higherS} and thus of Proposition \ref{prop:nu}.

\section{Application to circular flows}\label{sec:circular}

One of the main examples of interest in the theory of inviscid damping for
circular flows is given by the question of stability of a point vortex
$\omega(t)= \delta_{x_{0}}$.
On the level of the velocity, this flow corresponds to the mildly degenerate
angular velocity $r^{-1} e_{\theta}$, which is a Taylor-Couette flow.
Hence, at the linearized level, the equations reduce to a transport problem and
are explicitly solvable.
However, similar to Couette flow, introducing any perturbation to this profile
one loses this explicit solution. In particular, any nonlinear stability result
would have to account for the fact that a perturbation might cause the point
vortex to move and introduce a perturbation to the (moving) circular flow around
the point vortex.
The presently considered setting, in a sense, fixes the point vortex at the
origin and studies the linear stability problem for perturbations.
In \cite{Zill6}, the first author has established stability in the
case of a compact domain $\T \times (R_{1},R_{2})$ with $0<R_{1}<R_{2}<\infty$.
The result for the unbounded domain case established in this paper introduces
several additional challenges:
\begin{itemize}
\item The physically natural spaces in the circular setting are given by
  $L^{2}(r\dd r \dd\theta)$-based spaces. The weight $r$ has hence to be taken into
  account in the construction of the Lyapunov functional and further degenerates
  as $r \downarrow 0$ and $r \uparrow \infty$. Thus, even in the case of a
  bilipschitz flow profile, weighted spaces require the use of our localized
  estimates. 
\item The Biot-Savart law in polar coordinates also includes a degenerate
  dependence on $r$, which then further has to be studied with degenerate
  weights. 
\item While in the case of the velocity corresponding exactly to a point vortex,
  $B=0$ and the dynamics are trivial, small perturbations require one to deal
  with both $U$ and $B$ being degenerate. 
\end{itemize}
Using our localization methods (to dyadic annuli in Cartesian coordinates), our
methods allow us to study mildly degenerate circular flows, which may asymptotically behave like power laws.
That is, we for instance consider profiles $U(r)\sim r^{\alpha}$ for $r \downarrow 0$ and
$U(r) \sim r^\beta$ for $r \uparrow \infty$, where $\alpha$ and $\beta$ may
differ.

The results of the previous sections, provide stability of (c.f. Section \ref{sec:linEuler})
\begin{align}
  \omega(s, \theta, t)= \e^{2s}\omega(r=\e^{s},\theta, t)
\end{align}
in the unweighted $L^{2}(\dd s \dd\theta)$ based Sobolev spaces.
However, from a physical point of view it would be more natural to control
\begin{align}
  \iint |\omega|^{2} r \dd r \dd  \theta = \iint \e^{2s} \omega(r=\e^{s}, \theta) \dd s \dd \theta &= \int |\omega|^{2} \e^{-2s} \dd s \dd \theta, \\
  \int |v|^2 \dd x \dd y = - \int \psi \omega r \dd r \dd \theta = -\int \psi \omega \e^{2s} \dd s \dd \theta = \int \psi \omega \dd s \dd \theta &= \int |\nabla\psi|^2 \dd s \dd \theta
\end{align}
Instead of considering weighted spaces, from a technical perspective we prefer
to study  $\omega_\star= \e^{-s}\omega$, $\psi_\star = \e^{-s} \psi$ in unweighted spaces.
Again denoting $s, \theta$ by $y,x$, our equations are then given by
  \begin{align}
    \dt \omega_\star + U(y)\p_x \omega_{\star} &= B\p_x \psi_{\star}, \\
    (\p_{x}^2+ (\p_y+1)^2) \psi_{\star} &= \omega_{\star}.
  \end{align}
  Thus the only modification of our equations occurs in the elliptic operator.
  We remark that by the triangle inequality
  \begin{align}
   - \int \psi_{\star} (\p_{x}^2+ (\p_y+1)^2) \psi_{\star} = \int |\p_{x}\psi_{\star}|^{2} + |(\p_{y}+1)\psi_{\star}|^{2} \geq \int |\p_{x}\psi_{\star}|^{2} - |\psi_{\star}|^{2} + |\p_{y}\psi_{\star}|^{2}.
  \end{align}
  Hence, this bilinear form is positive definite if we have a sufficient
  spectral gap in $x$ to absorb $-|\phi_{\star}|^{2}$.
  Using this triangle inequality approach, we similarly note that commutators
  for higher derivatives like
  \begin{align*}
  [\dg ,(\p_{x}^2+ (\p_y+ikt U')^2 + 2(\p_y+ikt U') + 1 )]
  \end{align*}
  are mostly unchanged by this modification and homogeneous solutions differ by
  multiplication by $\e^{s}$.
  Thus, with minor modifications, we obtain the following stability result.
  \begin{theorem}
   Under the same assumptions as in Theorem \ref{thm:L2} with a possible small increase
   of $k$ in condition \ref{H1}, stability also holds in the
   physically natural  $L^{2}(\e^{-2s}\dd s \dd \theta)$ based spaces.  
  \end{theorem}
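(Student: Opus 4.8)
The plan is to re-run the entire argument of Sections~\ref{sec:simple_case}--\ref{sec:H2_2} with the standard elliptic operator $\Delta_{k}$ (respectively its scattered version $E_{t}$) replaced by the conjugated operators
\begin{align*}
\mathcal{L}:=\p_{x}^{2}+(\p_{y}+1)^{2},\qquad \mathcal{L}_{t}:=-k^{2}+(\p_{y}-iktU'+1)^{2}=E_{t}+2(\p_{y}-iktU')+1,
\end{align*}
and to check that the only structural change is a zeroth-- and first--order perturbation, controlled exactly as the terms already handled via \ref{H2}. All identities and commutator computations of the previous sections then acquire only the extra contributions coming from $2(\p_{y}-iktU')+1$, the splitting Lemmas~\ref{lem:splitting}, \ref{lem:Db_splitting} and \ref{lem:nu_splitting} are unaffected since they rest only on linearity, and the homogeneous corrections $h_{\bullet},\tilde h_{\bullet},\tilde{\tilde h}_{\bullet}$ of \eqref{eq:homo1}--\eqref{eq:hheqn} are replaced by their analogues for $\mathcal{L}$, which differ from the old ones by multiplication by a smooth, strictly positive factor ($\e^{\pm s}$).

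\textbf{Weighted coercivity.} First I would revisit Lemma~\ref{lem:weighted_elliptic} and Lemmas~\ref{lem:H1elliptic}--\ref{lem:H2elliptic} with $E_{t}$ replaced by $\mathcal{L}_{t}$. Testing $-\l|U'|g,\mathcal{L}_{t}g\r$ and integrating by parts, the only new terms relative to the proof of Lemma~\ref{lem:weighted_elliptic} are $-2\Re\l|U'|g,(\p_{y}-iktU')g\r$ and $-\l|U'|g,g\r$. The $iktU'$ part of the first is purely imaginary and drops, while $-2\Re\l|U'|g,\p_{y}g\r=\int(\p_{y}|U'|)|g|^{2}=\int\sgn(U')U''|g|^{2}$, bounded by $\eps_{0}\int|U'||g|^{2}$ through \ref{H2}; the second equals $-\int|U'||g|^{2}$. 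Since $\int|U'||g|^{2}\le k^{-2}\int|U'||\nabla_{k,t}g|^{2}$, combining with the bound from Lemma~\ref{lem:weighted_elliptic} yields
\begin{align*}
-\l|U'|g,\mathcal{L}_{t}g\r\ \ge\ \Big(\gamma_{0}-\frac{1+\eps_{0}}{k^{2}}\Big)\int|U'||\nabla_{k,t}g|^{2}.
\end{align*}
This is where the hypothesis enters: strengthening \ref{H1} so that the smallest admissible $|k|=2\pi/L$ is slightly larger -- equivalently, requiring $k^{2}\gamma_{0}-(1+\eps_{0})\ge\gamma_{0}'k^{2}$ for a new constant $\gamma_{0}'\in(0,\gamma_{0})$ -- restores quantitative weighted coercivity with constant $\gamma_{0}'$. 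The same computation fixes Lemmas~\ref{lem:H1elliptic} and \ref{lem:H2elliptic}, whose modified operators are simply $\mathcal{L}_{t}$ together with the same $\dc^{(1)},\dc^{(2)}$ corrections as before.

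\textbf{Propagating the argument.} With coercivity secured, Lemma~\ref{lem:local_elliptic}, Proposition~\ref{prop:reduction} and the bilipschitz multiplier Lemma~\ref{lem:Bilipschitz} carry over: the extra first--order term produces, after one integration by parts, only $\int|U''||\cdots|$--type errors already absorbed through \ref{H2}, and the multiplier construction of \cite{Zill5} depends only on the transport structure and the $k^{2}$ spectral gap, both unchanged. For Sections~\ref{sec:higher} and \ref{sec:H2_2}, the commutators $[\dg,\mathcal{L}_{t}]$ differ from $[\dg,E_{t}]$ only by $[\dg,2(\p_{y}-iktU')+1]=-2\dc^{(1)}\p_{y}+2\frac{U''}{U'}\dc(\p_{y}-iktU')$ (by \eqref{eq:comm0}), which is of exactly the type handled by the modified elliptic estimates and enters the right--hand sides of Lemmas~\ref{lem:H1equation} and \ref{lem:F2} with $\eps_{0}$--small coefficients. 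The dressed homogeneous solutions $\e^{\pm s}h_{\bullet}$, etc., still obey the exponential--decay bounds used in Lemmas~\ref{lem:boundaryCorrection2} and \ref{lem:coefficientsF2}, since for $|k|>1$ the decay $\e^{-|ky|}$ survives multiplication by $\e^{\mp y}$; as moreover $U'$ and $B$ have at most power--law behavior in $s=y$, the products $B\e^{\pm s}\tilde h_{\bullet}/U'$ remain in the $H^{1}$ and $H^{2}$ spaces required. The damping estimates of Lemma~\ref{lem:damp} use only the oscillation of $\e^{iktU}$ and elliptic regularity of the (modified) stream--function map, hence remain valid. One then reconstructs the Lyapunov functionals $\l\omega_{\star},A(t)\omega_{\star}\r$ and $\l\dg F^{(1)},A^{(1)}(t)\dg F^{(1)}\r$ exactly as before in the unweighted $L^{2}(\dd s\,\dd\theta)$ spaces for $\omega_{\star}=\e^{-s}\omega$, which is precisely $L^{2}(\e^{-2s}\dd s\,\dd\theta)$ for $\omega$, and obtains all the conclusions of Theorem~\ref{thm:L2}.

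\textbf{Main obstacle.} The one genuinely new point -- and the source of the hypothesis on $k$ -- is securing quantitative coercivity of the weighted elliptic form after the $O(1)$ lower--order defect: once the strict lower bound with a definite constant $\gamma_{0}'>0$ is in place, all remaining modifications are lower order and are absorbed into the error budget already governed by \ref{H2} and \ref{P}. A secondary, routine check is that on the unbounded domain the $\e^{\mp s}$ factors dressing the homogeneous solutions interact favorably with the physical weight $\e^{-2s}$ and with the (sub--exponential) power--law growth of $U,B$ as $r\downarrow 0$ and $r\uparrow\infty$; this is exactly where the mild degeneracy of the profile is used.
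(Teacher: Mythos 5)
Your proposal follows essentially the same route as the paper's sketch: conjugate by $\e^{-s}$ so that the only structural change is the replacement of the elliptic operator by $\p_x^2+(\p_y+1)^2$ (and its scattered counterpart), then recover weighted coercivity by absorbing the new zeroth/first-order terms into the spectral gap via a small increase in $k$, observe that commutators and homogeneous corrections are only dressed by $\e^{\pm s}$, and conclude that the $L^2(\dd s\,\dd\theta)$ Lyapunov argument for $\omega_\star=\e^{-s}\omega$ is exactly $L^2(\e^{-2s}\dd s\,\dd\theta)$ stability for $\omega$. You merely fill in the quantitative coercivity estimate $\gamma_0-\tfrac{1+\eps_0}{k^2}$ and the observation that for $|k|>1$ the dressed homogeneous solutions retain their exponential decay, both of which the paper leaves implicit in its "minor modifications" remark.
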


\section*{Acknowledgments}
M. Coti Zelati was partially supported by NSF grant DMS-1713886.
C. Zillinger has been supported by a travel grant of the Simon's foundation.

\begin{bibdiv}
\begin{biblist}

\bib{BCZVvortex2017}{article}{
      author={{Bedrossian}, J.},
      author={{Coti Zelati}, M.},
      author={{Vicol}, V.},
       title={{Vortex axisymmetrization, inviscid damping, and vorticity
  depletion in the linearized 2D Euler equations}},
        date={2017-11},
     journal={ArXiv e-prints},
      eprint={1711.03668},
}

\bib{bedrossian2015dynamics}{article}{
      author={Bedrossian, Jacob},
      author={Germain, Pierre},
      author={Masmoudi, Nader},
       title={Dynamics near the subcritical transition of the {3D} {C}ouette
  flow {I}: Below threshold case},
        date={2015},
     journal={arXiv preprint arXiv:1506.03720},
}

\bib{bedrossian2015inviscid}{article}{
      author={Bedrossian, Jacob},
      author={Masmoudi, Nader},
       title={Inviscid damping and the asymptotic stability of planar shear
  flows in the 2{D} {E}uler equations},
        date={2015},
     journal={Publ. Math. Inst. Hautes \'Etudes Sci.},
      volume={122},
       pages={195\ndash 300},
}

\bib{Euler_stability}{article}{
      author={Bouchet, Freddy},
      author={Morita, Hidetoshi},
       title={{Large time behavior and asymptotic stability of the {2D} {E}uler
  and linearized {E}uler equations}},
        date={2010},
     journal={Physica D: Nonlinear Phenomena},
      volume={239},
      number={12},
       pages={948\ndash 966},
}

\bib{Lin-Zeng}{article}{
      author={Lin, Zhiwu},
      author={Zeng, Chongchun},
       title={{Inviscid dynamical structures near {C}ouette Flow}},
        date={2011},
     journal={Archive for rational mechanics and analysis},
      volume={200},
      number={3},
       pages={1075\ndash 1097},
}

\bib{Zhang2015inviscid}{article}{
      author={{Wei}, D.},
      author={{Zhang}, Z.},
      author={{Zhao}, W.},
       title={{Linear inviscid damping for a class of monotone shear flow in
  Sobolev spaces}},
        date={2015-09},
     journal={ArXiv e-prints},
      eprint={1509.08228},
}

\bib{WZZkolmogorov}{article}{
      author={{Wei}, D.},
      author={{Zhang}, Z.},
      author={{Zhao}, W.},
       title={{Linear inviscid damping and enhanced dissipation for the
  Kolmogorov flow}},
        date={2017-11},
     journal={ArXiv e-prints},
      eprint={1711.01822},
}

\bib{WZZvorticitydepl}{article}{
      author={{Wei}, D.},
      author={{Zhang}, Z.},
      author={{Zhao}, W.},
       title={{Linear inviscid damping and vorticity depletion for shear
  flows}},
        date={2017-04},
     journal={ArXiv e-prints},
      eprint={1704.00428},
}

\bib{Zill5}{article}{
      author={Zillinger, Christian},
       title={Linear inviscid damping for monotone shear flows in a finite
  periodic channel, boundary effects, blow-up and critical {S}obolev
  regularity},
        date={2016},
     journal={Arch. Ration. Mech. Anal.},
      volume={221},
      number={3},
       pages={1449\ndash 1509},
}

\bib{Zill3}{article}{
      author={Zillinger, Christian},
       title={Linear inviscid damping for monotone shear flows},
        date={2017},
     journal={Trans. Amer. Math. Soc.},
      volume={369},
      number={12},
       pages={8799\ndash 8855},
}

\bib{Zill6}{article}{
      author={Zillinger, Christian},
       title={On circular flows: linear stability and damping},
        date={2017},
     journal={J. Differential Equations},
      volume={263},
      number={11},
       pages={7856\ndash 7899},
}

\end{biblist}
\end{bibdiv}


\end{document}